\newif\iflongversion
\newcommand*{\RightDashedArrow}[1][]{{\tikz [baseline=-0.15ex,-latex, dashed,#1] \draw [#1] (0pt,0.5ex) -- (1.0em,0.5ex);}}%
\newcommand*{\LeftDashedArrow}[1][]{{\tikz [baseline=-0.15ex,-latex, dashed,#1] \draw [#1] (1.0em,0.5ex) -- (0pt,0.5ex);}}%
\newcommand*{\Vertex}[1][]{{\tikz [baseline=-0.75ex,-latex,#1] \draw [fill=black!60,#1] (0,0) circle (2pt);}}%
\newcommand*{\Outclaw}[1][]{
    {
        \tikz [semithick,baseline=-0.57ex,-latex] 
        {
            \draw (0,0) -- (3.5ex,0ex);
            \draw (0,0) .. controls (0.4ex,1.2ex) .. (3ex,2ex);
            \draw (0,0) .. controls (0.4ex,-1.2ex) .. (3ex,-2ex);
            \draw [thin,fill=black!60,#1] (-2pt,0) circle (2pt);
            \draw [thin,fill=black!60,#1] (3.3ex,2ex) circle (2pt);
            \draw [thin,fill=black!60,#1] (3.3ex,-2ex) circle (2pt);
            \draw [thin,fill=black!60,#1] (3.8ex,0ex) circle (2pt);
        }
    }
}%
\newcommand*{\Inclaw}[1][]{
    {
        \tikz [semithick,baseline=-0.75ex,-latex] 
        {
            \draw (3.5ex,0ex) -- (0,0);
            \draw (3ex,2ex) .. controls (0.6ex,1.2ex) .. (0,0);
            \draw (3ex,-2ex) .. controls (0.6ex,-1.2ex) .. (0,0);
            \draw [thin,fill=black!60,#1] (-2pt,0) circle (2pt);
            \draw [thin,fill=black!60,#1] (3ex,2ex) circle (2pt);
            \draw [thin,fill=black!60,#1] (3ex,-2ex) circle (2pt);
            \draw [thin,fill=black!60,#1] (3.5ex,0ex) circle (2pt);
        }
    }
}%
\DeclarePairedDelimiter{\floor}{\lfloor}{\rfloor}
\DeclarePairedDelimiter{\ceil}{\lceil}{\rceil}
\newcommand\dtv[1]{{\left\|#1\right\|_{\textsc{tv}}}}
\DeclareRobustCommand{\stirling}{\genfrac\{\}{0pt}{}}
\newcommand\bigO[1]{{O\left( #1 \right)}}
\newcommand\smallo[1]{{o\left( #1 \right)}}
\newcommand\bigOm[1]{{\Omega\!\left( #1 \right)}}
\newcommand\numberthis{\addtocounter{equation}{1}\tag{\theequation}}
\newcommand{\Bin}{\mathop{\mathrm{Bin}}}
\newcommand{\Poi}{\mathop{\mathrm{Poi}}}
\newcommand{\dist}{\mathop{\mathrm{dist}}}
\newcommand{\arc}{\mathop{\mathrm{arc}}}
\newcommand\randdfa{{\cD_{n,k}}}
\newcommand\randdfaK{{\cD_{n,K}}}
\newcommand\randdfap{{\widehat{\cD}_{n,k}}}
\newcommand\randacycdfa{{\cD_{n,k}^{\mathrm A}}}
\newcommand\randsimple{{\cD_{n,k}^{*}}}
\newcommand\randsimpleu{{\cD_{n,k}^{**}}}
\newcommand\randdfaidx[1]{{\cD_{n,k}^{[#1]}}}
\newcommand\giant{{\cG_n}}
\newcommand\giantsize{{|\giant|}}
\newcommand\giantout{{\cG_n^c}}
\newcommand\giantoutsize{{\left|\cG_n^c \right|}}
\newcommand\giantoutgraph{\randdfa[\giantout]}
\newcommand\onecore{\cO_n}
\newcommand\onecoresize{|\onecore|}
\newcommand\onecoreout{\onecore^c}
\newcommand\oneshift{{\partial \cO_n}}
\newcommand{\spec}[1]{\cS_{#1}}
\newcommand\specone{\spec{1}}
\newcommand\speconesize{|\specone|}
\newcommand\specout[1]{\cS^{\prime}_{#1}}
\newcommand\specvout[1]{\cS^{*}_{#1}}
\newcommand\specvoutsize[1]{|\specvout{#1}|}
\newcommand\specvoutgraph[1]{\randdfa[\cS^{*}_{#1}]}
\newcommand\speconedist[1]{\cS_{#1}^{+}(v_1)}
\newcommand\spectwodist[1]{\cS_{#1}^{-}(v_2)}
\newcommand\speconedistm{\speconedist{m}}
\newcommand\spectwodistm{\spectwodist{\le m}}
\newcommand\vin{\cV_n}
\newcommand\vinsize{|\vin|}
\newcommand\vout{\vin^c}
\newcommand\voutgraph{\randdfa[\vin^c]}
\newcommand\voutsize{|\vin^c|}
\newcommand\tauk{{\tau_k}}
\newcommand\nuk{{\nu_k}}
\newcommand\muk{{e^{-\tauk}}}
\newcommand\lambdak{{\lambda_k}}
\newcommand\err{n^{1/2+\delta}}
\newcommand\errp{n^{{-1}/2+\delta}}
\newcommand\errpd{{n^{{1}/2-\delta}}}
\newcommand\inrange{[\nuk n-\err, \nuk n +\err]}
\newcommand\outrange{[\muk n-\err, \muk n +\err]}
\newcommand\Iin{{\cI_n}}
\newcommand\Iout{{\cI_n^c}}
\newcommand\cycnumgiant{C_{n}}
\newcommand\cyclgiant{C_{n,\ell}}
\newcommand\cycnum{C_n^*}
\newcommand\cycl{C_{n,\ell}^*}
\newcommand\cycsmall{\overline{C_{n}^*}}
\newcommand\ucyc{U}
\newcommand\ucycl{\ucyc_{\ell}}
\newcommand{\sh}{\mathop{\mathrm{Sh}}}
\newcommand{\vecs}{\vec{s}_m}
\newcommand{\vect}{\vec{t}_m}
\newcommand{\treev}{\cT_{v}}
\newcommand{\treevw}{W_{v}^{*}}
\newcommand\dfa{\textsc{dfa}}
\newcommand\scc{\textsc{scc}}
\newcommand\dagr{\textsc{dag}}
\title{The graph structure of a deterministic automaton chosen at
        random\iflongversion: full version\fi}
\author{\small Xing Shi Cai, Luc Devroye
    \\
    \small 
    School of Computer Science, McGill University of Montreal, Canada,\\
    \small 
    \texttt{xingshi.cai@mail.mcgill.ca}\\
    \small 
    \texttt{lucdevroye@gmail.com}
}
\date{\small \today}
\begin{document}

\maketitle

\begin{abstract}
    An \(n\)-state deterministic finite automaton over a \(k\)-letter alphabet can
    be seen as a digraph with \(n\) vertices which all have \(k\) labeled out-arcs.
    \citet{Grusho1973} proved that whp in a random \(k\)-out digraph there is a
    strongly connected component of linear size, i.e., a giant,
    and derived a central limit theorem. We show that whp the part
    outside the giant contains at most a few short cycles and mostly consists
    of tree-like structures, and present a new  proof of \citeauthor{Grusho1973}'s
    theorem. Among other things, we pinpoint the phase transition for strong
    connectivity.

    \medskip

    \noindent \textbf{Keywords:} random digraphs; deterministic finite automaton
\end{abstract}

\section{Introduction}

\subsection{The model and the history}

The deterministic finite automaton (\dfa{}) is widely used in computational complexity
theory. Formally, a \dfa{} is a \(5\)-tuple \((Q, \Sigma, \delta, q_0, F)\),
where \(Q\)
is a finite set called the set of states, \(\Sigma\) is a finite set called the
alphabet, \(\delta:Q\times\Sigma \to Q\) is the transition function, \(q_0 \in
Q\)
is the start state, and \(F \subseteq Q\) is the set of accept states. If
\(q_0\) and \(F\)
are ignored, a \dfa{} with \(n\) states and a \(k\)-alphabet can be seen as a digraph
with vertices \([n]\equiv\{1\ldots,n\}\) in which each vertex has \(k\) out-arcs
labeled by \(1, \ldots, k\) (a \emph{\(k\)-out digraph}). Note that such a digraph
can have self-loops and multi-arcs. For a basic introduction to \dfa{} and its
applications, see \citep{Sipser2012}.

Let \(\randdfa\) denote a digraph chosen uniformly at random
from all \(k\)-out digraphs of \(n\) vertices. Equivalently \(\cD_{n,k}\) is
a random \(k\)-out digraph of \(n\) vertices with the endpoints of
its \(kn\) arcs chosen independently and uniformly at random.

When \(k=1\), \(\randdfa\) is equivalent to a uniform random mapping from
\([n]\) to itself, which has been well studied by \citet{Kolchin1986random},
\citet{Flajolet1990}, and \citet{Aldous1994}.  In \(\cD_{n,1}\), the largest
strongly connected component (\scc{}) has expected size
\(\Theta(\sqrt{n})\), and so does the size of the longest cycle.  However, as
shown later, for \(k \ge 2\), the largest \scc{} has expected size
\(\Theta(n)\).

From now on we assume that \(k \ge 2\).
Let \(\spec{v}\) (the \emph{spectrum} of
\(v\)) be the set of vertices in \(\randdfa\) that are reachable from vertex
\(v\),
including \(v\) itself. In \citeyear{Grusho1973} \citet{Grusho1973} first
proved that \((\speconesize-\nuk n)/\sigma_k \sqrt{n}\) converges in distribution
to a standard normal, where \(\nuk\) and \(\sigma_k\) are explicitly defined
constants.  

Given a set of vertices \(\cS \subseteq [n]\), call \(\cS\) \emph{closed} if
there are no arcs that start from vertices in \(\cS\) and end at vertices in
\(\cS^c \equiv [n] \setminus \cS\).  Let \(\giant\) be the set of vertices in the largest closed
\scc{} in \(\randdfa\). (If the largest closed \scc{} is not unique, let \(\giant\) be the vertex
set of the largest closed \scc{} that contains the smallest vertex-label.) We call \(\giant\) the giant.
\citeauthor{Grusho1973} also proved that \(\giantsize\) has the same limit
distribution as \(\speconesize\) by showing that with high probability (whp)
\(\giant\) is reachable from all vertices and that \(\speconesize - \giantsize
= o_p(\sqrt{n})\) (see \citep{Janson2011random} for the notation).  His proof
relies on a result by \citet{Sevastyanov1967} which approximates the
exploration of \(\specone\) with a Gaussian process.

In 2012 \citet*{Carayol12}
proved a local limit theorem for
\(\speconesize\) by analyzing the limit behavior of the
probability that \(\speconesize = s\) for an \(s\) close to \(\nuk n\).  Their proof
depends on a theorem by \citet*{Korshunov1978} which says that conditioned
on every vertex having in-degree at least one, the probability that \(\specone =
[n]\) tends to some constant.  \citeauthor{Carayol12} derived a simple and
explicit formula of this constant from their theorem.  (The same formula is
also proved by \citet{Lebensztayn2010} with a more analytic approach using
Lagrange series.)

Lately the simple random walk (SRW) on \(\randdfa\) has gained some attention
for its applications in machine learning.  \citet*{Perarnau2014} studied the
stationary distribution of the SRW by analyzing the distances in \(\randdfa\).
They proved that the diameter and the typical distance, rescaled by \(\log n\),
converge in probability to explicit constants.  \citet*{Angluin2015}
studied the rate of the convergence to the stationary distribution of the SRW.
They also suggested an algorithm for learning a uniformly random
\dfa{} under Kearns' statistical query model~\citep{Kearns1998}.

\subsection{Our results and a sketch of proof}

A digraph can be uniquely decomposed into \scc{}s which form a directed acyclic graph (\(\dagr{}\))
through a process called condensation
that contracts every \scc{} into a single vertex while keeping all the arcs
between \scc{}s \citep{Bang2009}.  The condensation \dagr{} of \(\randdfa\) is denoted by \(\randacycdfa\).

Let \(\giantout \equiv [n] \setminus \giant\), i.e., \(\giantout\) is the set
of vertices that are outside the giant. The
structure of \(\randacycdfa\) depends on \(\giantoutgraph\), the digraph
induced by \(\giantout\).  Our analysis shows that in \(\giantoutgraph\) the
total number of cycles and the number of cycles of a fixed length both converge
to Poisson distributions with constant means.  So the number of cycles and the
length of the longest cycle are both \(O_p(1)\) (see \citep{Janson2011random}).
Furthermore, these cycles are vertex-disjoint whp.  Therefore, almost every
vertex in \(\giantout\) is a \scc{} itself and \(\randacycdfa\) is very much
like \(\randdfa\) with the giant contracted into a single vertex.

The \emph{\(d\)-core} of an undirected graph is the maximum induced subgraph in which all vertices
have degree at least \(d\). Similarly the \emph{\(d\)-in-core} of a digraph can be defined
as the maximum induced sub-digraph in which all vertices have in-degree at
least \(d\). Let \(\onecore\) denote the set of vertices in the one-in-core of
\(\randdfa\). Note that
\(\giant \subseteq \onecore\) since a \scc{} induces a sub-digraph with each
vertex having in-degree at least one. Also note that cycles cannot exist
outside \(\onecore\), for otherwise they contradict the maximality of \(\onecore\).
Now assume that every vertex can reach \(\giant\), which happens whp by
\citet{Grusho1973}.  Then \(\randdfa\) can be divided into three layers: the
center is \(\giant\); then comes \(\onecore\setminus\giant\), which consists of
cycles outside \(\giant\) and paths from these cycles to \(\giant\); the outermost
is \(\onecoreout \equiv [n] \setminus \onecore\), which is acyclic.

\begin{figure}[ht!]
  \centering
    \begin{tikzpicture}
    \node[anchor=south west,inner sep=0] at (0,0) {\includegraphics{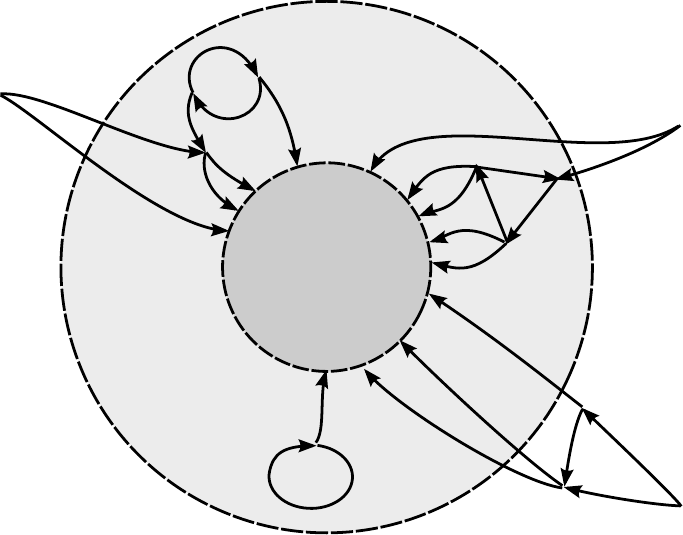}};
    \node at (3.4,2.7) {\(\giant\)};
    \node at (1.5,2.7) {\(\onecore \! \setminus \! \giant\)};
    \node at (-0.1,2.7) {\(\onecoreout\)};
    \end{tikzpicture}
    \caption{Three layers of \(\randdfa\): the giant \(\giant\);
    the one-in-core \(\onecore\); and the whole graph.}
\end{figure}

Since there cannot be many vertices in cycles outside the giant, the middle
layer \(\onecore \setminus \giant\) must be very ``thin''.  Thus if we can
prove \((\onecoresize - \nuk n)/\sqrt{n}\) converges to a normal distribution,
then we can also prove it for \(\giantsize\).  The event \(\onecoresize = s\)
happens if and only if there is a set of vertices \(\cS\) with \(|\cS|=s\) such
that: (a) \(\randdfa[\cS]\), the sub-digraph induced by \(\cS\), has minimum
in-degree one (\emph{surjective}) and there are no arcs going from \(\cS\) to
\(\cS^c\) (\emph{closed}), which we refer to as \(\cS\) being a \emph{
\(k\)-surjection} (since \(\randdfa[\cS]\) is equivalent to a surjective
function from \([ks]\) to \([s]\)); (b) \(\randdfa[\cS^c]\) is acyclic. The
probability of (a) can be computed by counting the number of surjective
functions. And we are able to show that the probability of (b) converges to a
constant. Note that for a fixed set {\(\cS\)} (a) and (b) are independent because they depend on the
endpoints of two disjoint sets of arcs.  Thus we can get the limit of
\(\p{\onecore = \cS}\).  Since the one-in-core of a digraph is unique,
\(\p{\onecoresize = s} = \sum_{\cS \subseteq [n]:|\cS|=s} \p{\onecore = \cS}\).  Thus
we can finish the proof by computing the characteristic function of
\((\onecoresize - \nuk n)/\sqrt{n}\).

Note that although our formula for \(\p{\onecoresize =s}\) is inspired by and
resembles \citeauthor{Carayol12}'s formula for \(\p{\speconesize = s}\), we
actually prove the result from scratch without relying on previous work.
Since we are able to derive explicit expressions of all the constants in our
formula, the computation of the characteristic function becomes quite simple.
Furthermore, to our knowledge this is the first self-contained proof. Thus in
Section~\ref{sec:giant} we prove:
\begin{thm}[Central limit law]
    \label{thm:CLL}
    Let \(\cZ\) denote a standard normal random variable. 
    Then as \(n \to \infty\),
    \begin{align*}
            \frac{\onecoresize - \nuk n}{\sigma_k \sqrt{n}} \inlaw \cZ, \qquad
            \frac{\giantsize - \nuk n}{\sigma_k \sqrt{n}} \inlaw \cZ, \qquad
            \frac{\max_{v \in [n]} |\spec{v}| - \nuk n}{\sigma_k \sqrt{n}} \inlaw \cZ,
    \end{align*}
    where \(\nuk\) and \(\sigma_k\) are constants defined by
    \[
    \nuk \equiv \frac {\tauk} k, \qquad \qquad \qquad \sigma_k^2 \equiv
    \frac{{\tauk}}{k e^{{\tauk}}(1-ke^{{-\tauk}})},
    \]
    and \({\tauk}\) is the unique positive solution of \(1-{\tauk}/k -
    e^{{-\tauk}} = 0\).
\end{thm}

\begin{myRemark*}
Equivalently, \(\nuk\) is the unique positive solution of \(1-\nuk=e^{-k \nuk}\) and
\[
    \sigma_k^2 = \frac{\nuk (1-\nuk)}{1-k(1-\nuk)}.
\]
Let \(G(n,m)\) be a Erdős–Rényi random graph, i.e., a graph chosen uniformly at random from all
graphs with \(n\) vertices and \(m\) edges \cite{Erdos60onthe}.
It is well-known that for \(k > 1\),
\(|\cC_{\max}^n|\)---the size of the largest component in \(G(n,m=nk/2)\)---is \( (\nuk +o(1)) n\)
whp.
Moreover, \( (|\cC_{\max}^n|-\nuk n)/\sqrt{n}\) also converges in distribution 
to a normal random variable with variance \(\sigma_k^2\) (see, e.g., \citet{durrett2007random}).
Intuitively, this is because the in-degree of a vertex in \(\randdfa\) has asymptotically a Poisson
distribution of mean \(k\). Thus a backward exploration process from vertex in \(\randdfa\)
is approximately a Galton-Watson process with survival probability \(\nuk\),
as is the exploration process starting from a vertex in \(G(n,m=nk/2)\).
\end{myRemark*}

\tikzstyle{hackennode}=[draw,circle,fill=white,inner sep=0,minimum size=4pt]
\tikzstyle{hackenline}=[line width=3pt]

Section~\ref{sec:dag} studies the part of \(\randdfa\) outside the giant, which
determines the structure of \(\randacycdfa\) and supports the proof of
Theorem~\ref{thm:CLL}. Our results are summarized in two theorems, where all our logarithms
are natural:
\begin{thm}[Cycles outside the giant]
    \label{thm:cycle}
    We have:
    \begin{enumerate}[(a)]
        \item Let \(L_n\) be the length of the longest cycle in
            \(\giantoutgraph\). Then
            \(L_n = O_p(1)\).
        \item Let \(\cycnumgiant\) be the number of cycles in
            \(\giantoutgraph\). 
            Then
            \[\cycnumgiant \inlaw \Poi\left( \log \frac{1}{1-k \muk} \right),\]
            where \(\Poi(x)\) denotes the Poisson distribution with mean \(x\).
        \item Let \(\cyclgiant\) be the number of cycles of length \(\ell\) in
            \(\giantoutgraph\).
            Then for all fixed \(\ell \ge 1\),
            \[
                \cyclgiant \inlaw \Poi\left(\frac{(k
                    \muk)^{\ell}}{\ell}\right).
            \]
    \end{enumerate}
\end{thm}
\begin{thm}[Spectra outside the giant]
\label{thm:dag}
Let \(\specout{v} \equiv \spec{v} \cap \giantout\), i.e., \(\specout{v}\) is the
spectrum of \(v\) in \(\giantoutgraph\). Let \(\dist(v,u)\) be the distance
from \(v\)
to \(u\), i.e., the length of the shortest directed path from \(v\) to \(u\).  Then 
\begin{enumerate}[(a)]
    \item \(\p{\cup_{v \in \giantout}[\arc(\randdfa[\specout{v}]) -
        |\specout{v}| \ge 1]} = o(1)\), where \(\arc(\cdot)\) denotes the number of
        arcs. In other words, whp every spectrum in \(\giantoutgraph\) is a tree
        or a tree plus an extra arc.
    \item Let \(S_n \equiv \max_{v \in \giantout}|\specout{v}|\).
        Let \(\lambdak \equiv (k-{\tauk})\left( \frac{{\tauk}}{k-1}
        \right)^{k-1}\).
        Then
        \[\frac {S_n}{\log n} \inprob \frac
            1{\log(1/\lambdak)}.\]
    \item Let \(W_n \equiv \max_{v \in \giantout} \min_{u \in \giant}
        \dist(v,u)\), i.e., the maximum distance to \(\giant\).  Then
        \[\frac {W_n}{\log_k \log n} \inprob 1.\]
    \item Let \(M_n\) be the length of the longest path in \(\giantoutgraph\). Then
        \[\frac {M_n}{\log n} \inprob \frac 1
            {\log(e^{\tauk}/k)}.\]
    \item Let \(D_n \equiv \max_{v \in \giantout} \max_{u \in \specout{v}}
        \dist(v, u)\). Then
        \[\frac {D_n}{\log n} \inprob \frac 1
            {\log(e^{\tauk}/k)}.\]
\end{enumerate}
\end{thm}

The rest of the paper gives some other results regarding this model.
Section~\ref{sec:phase} shows that \(\randdfa\) exhibits a phase transition for strong connectivity.
Section~\ref{sec:simple} extends some of our results to simple \(k\)-out digraphs.
\iflongversion
Section~\ref{sec:typical:distance} analyzes the typical distances in
\(\randdfa\)
with a technique called path counting, which is very different from
the method used by \citeauthor{Perarnau2014} in~\citep{Perarnau2014}.
\fi
Section~\ref{sec:extension} suggests some extensions of this model.

\begin{myRemark*}
    Lemma \ref{lem:middle:layer} shows that \(\onecoresize-\giantsize = O_p(1)\).
    The intuition is that
    a digraph with minimal in-degree and out-degree at least one is likely to have a large \scc{}. 
    This phenomenon is also observed in \(D(n,p)\), which is a random digraph of \(n\) vertices with each possible arc existing
    independently with probability \(p\). \citet[thm.\ 1.3]{RSA:RSA20622} showed that in \(D(n,p)\) the
    \( (1,1)\)-core---the maximal induced sub-digraph in which each vertex has in-degree and
    out-degree at least one---differs from the largest \scc{} in size by at most \(O((\log n)^8)\),
    whp.  This intuition is also used for studying the asymptotic counts of strongly connected
    digraphs (see \citet{RSA:RSA20416,RSA:RSA20433}).
\end{myRemark*}

\section{The size of the one-in-core}

\label{sec:giant}

\subsection{The law of large numbers for the one-in-core}

To prove Theorem~\ref{thm:CLL}, we first need to narrow the range of
\(\onecoresize\) to close to \(\nuk n\).
\begin{thm}[Law of large numbers]
    \label{thm:LLL} 
    For all fixed \(\delta \in (0,1/2)\), 
    \[\p{\onecoresize \notin \cI_n} \le \frac {1+o(1)} n,\]
    where \(\cI_n \equiv \inrange\).
\end{thm}
\noindent Thus \(\onecoresize / n \inprob \nuk\), which gives the theorem
its name.

Let \(K_s\) be the number of \(k\)-surjections of size \(s\) in \(\randdfa\). Then it
suffices to show that \(\p{\sum_{s \notin \cI_n} K_s \ge 1} \le
(1+o(1))/n\).  As
argued in the introduction, for a set of vertices \(\cS\) to be the one-in-core, it
must also be a \(k\)-surjection, i.e., every vertex in \(\randdfa[\cS]\), the sub-digraph
induced by \(\cS\), must have minimum in-degree one (\(\cS\) is \emph{surjective}), and there are no arcs
going from \(\cS\) to \(\cS^{c}\) (\(\cS\) is \emph{closed}). 
Thus
\[
    \p{\cS \text{ is a \(k\)-surjection}}
= \p{\cS \text{ is surjective}~|~\cS \text{ is closed}}\p{\cS \text{ is
closed}}.
\]
Computing the limit of the two factors shows that:
\begin{lemma} We have
    \[
    \p{\sum_{s \notin \Iin} K_s \ge 1} \le \frac {1+o(1)} n.
    \]
    And for \(s \in \Iin\)
    \[
        \e{K_s} \sim \frac{1}{\sqrt{2 \pi(1-ke^{-\tauk})n}} ~
    g\left(\frac s n\right) ~ 
    \left[ f\left(\frac s n \right) \right]^{n},
    \]
    where
    \[
    g(x) \equiv \frac 1 {\sqrt{x(1-x)}}, \qquad \qquad
    f(x) \equiv 
        \left[
            \frac{x^{k-1} \gamma_k}{(1-x)^{(1-x)/x} }
        \right]^{x},
    \]
    and   \(\gamma_k \equiv \left( \frac k {e{\tauk}} \right)^k(e^{{\tauk}} -1)\).
    \label{lem:k:surj}
\end{lemma}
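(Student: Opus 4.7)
The plan is to compute $\e{K_s}$ exactly by symmetry and a factorisation of the $k$-surjection event, extract its asymptotics via a saddle point evaluation of $s!\stirling{ks}{s}$, and then obtain the tail bound by a standard first moment estimate. Specifically, $\e{K_s} = \binom{n}{s}\,\p{\cS\text{ is a }k\text{-surjection}}$ for any fixed $\cS \subseteq [n]$ with $|\cS| = s$, and the two defining properties decompose as
\[\p{\cS\text{ is a }k\text{-surjection}} \;=\; \p{\cS\text{ closed}}\;\p{\cS\text{ surjective}\mid\cS\text{ closed}}.\]
Closure requires each of the $ks$ out-arcs from $\cS$ to land in $\cS$, giving $(s/n)^{ks}$ by independence. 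Conditional on closure, these $ks$ endpoints are i.i.d.\ uniform on $\cS$, so surjectivity is the event that a uniform random function $[ks] \to [s]$ is onto, whose probability is $s!\stirling{ks}{s}/s^{ks}$. Hence
\[\e{K_s} \;=\; \binom{n}{s}\left(\frac{s}{n}\right)^{ks}\frac{s!\stirling{ks}{s}}{s^{ks}}.\]

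For the asymptotic formula I would use $s!\stirling{ks}{s} = (ks)!\,[z^{ks}](e^z - 1)^s$ and extract the coefficient by a Cauchy contour integral on $|z| = \rho$. Optimising $\rho$ yields the saddle point equation $\rho e^{\rho}/(e^{\rho} - 1) = k$, which is algebraically equivalent to $1 - \rho/k = e^{-\rho}$; this identifies the saddle as $\rho = \tauk$. Combining the saddle estimate with Stirling's formula for $(ks)!$ produces
\[\frac{s!\stirling{ks}{s}}{s^{ks}} \;\sim\; \frac{\gamma_k^s}{\sqrt{1 - ke^{-\tauk}}},\]
the prefactor coming from the second derivative at the saddle together with the identity $1 - ke^{-\tauk} = 1 - k + \tauk$ (itself a consequence of $e^{-\tauk} = 1 - \tauk/k$). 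Stirling for $\binom{n}{s}$ then folds the factor $(s/n)^{ks}$ into the exponential base, and after setting $x = s/n$ the stated asymptotic
\[\e{K_s} \;\sim\; \frac{1}{\sqrt{2\pi(1 - ke^{-\tauk})n}}\,g(x)\,[f(x)]^n\]
falls out directly.

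For the tail bound I would apply Markov,
\[\p{\textstyle\sum_{s \notin \Iin} K_s \ge 1} \;\le\; \sum_{s \notin \Iin}\e{K_s},\]
and study $h(x) \equiv \log f(x)$. A direct computation shows $h(\nuk) = 0$, $h'(\nuk) = 0$, and $h''(\nuk) < 0$; the critical point condition reduces exactly to $1 - \nuk = e^{-k\nuk}$, the alternative characterisation of $\nuk$ noted after Theorem~\ref{thm:CLL}. Taylor expansion therefore gives $[f(s/n)]^n \le \exp(-c(s - \nuk n)^2/n) \le \exp(-c\, n^{2\delta})$ for $s \notin \Iin$ inside a fixed neighbourhood of $\nuk n$. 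For $s$ far from $\nuk n$, for instance $s = o(n)$ or $n - s = o(n)$, a coarser bound on $\binom{n}{s}(s/n)^{ks}$ (such as $\binom{n}{s}(s/n)^{ks} \le e^s (s/n)^{(k-1)s}$, which already yields $e^{-\Omega(n)}$ for small $s$) handles that regime, and a symmetric estimate covers $s$ close to $n$. Summing the $O(n)$ contributions gives $\sum_{s \notin \Iin}\e{K_s} = o(1/n)$. The main obstacle is keeping the saddle point approximation uniform in $s$ across $\Iin$ with explicit enough error terms to simultaneously establish both the asymptotic equivalence on $\Iin$ and the quantitative decay on its complement; once the Gaussian-type term $\exp(-c(s - \nuk n)^2/n)$ is in hand with a valid constant $c$, the remaining ``far'' regimes are cleaned up by the separate coarser estimates.
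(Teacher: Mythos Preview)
Your approach matches the paper's closely: the exact formula $\e{K_s} = \binom{n}{s}(s/n)^{ks}\,s!\stirling{ks}{s}/s^{ks}$, the asymptotic for $s!\stirling{ks}{s}$ (the paper cites Good's 1961 result rather than redoing the saddle point, but it is the same computation), and the Taylor analysis of $h(x)=\log f(x)$ near $\nuk$ to handle the middle of the complement of $\Iin$ are all exactly what the paper does.

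There is, however, a genuine gap in your treatment of the small-$s$ regime, and it explains a feature of the statement you have overlooked. You assert that $\binom{n}{s}(s/n)^{ks}\le e^s(s/n)^{(k-1)s}$ ``already yields $e^{-\Omega(n)}$ for small $s$'' and conclude $\sum_{s\notin\Iin}\e{K_s}=o(1/n)$. Neither claim is correct. For bounded $s$ the bound $e^s(s/n)^{(k-1)s}$ is only polynomially small in $n$, not exponentially small; in particular for $s=1$ one has $\e{K_1}=n\cdot(1/n)^k=n^{1-k}$, which for $k=2$ equals exactly $1/n$. This single term already prevents the sum from being $o(1/n)$, and it is precisely why the statement reads $(1+o(1))/n$ rather than $o(1/n)$. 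The paper handles this by isolating $s=1$ as a separate lemma giving the $1/n$ contribution, then showing that $2\le s\le an$ contributes $o(1/n)$ via a more careful summation (splitting at $s=n^{\varepsilon}$ and bounding the resulting geometric-type sums), not via any $e^{-\Omega(n)}$ estimate.

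A minor remark: the large-$s$ regime is not ``symmetric'' to the small-$s$ one. For $s$ near $n$ the closure probability $(s/n)^{ks}$ is useless (it tends to $1$), and one must instead exploit $\gamma_k<1$ in the surjectivity probability to get exponential decay. The paper does this explicitly; your sketch should as well.
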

\noindent
Theorem~\ref{thm:LLL} follows immediately.  The proof of Lemma~\ref{lem:k:surj}
is postponed to the appendix.  (The two functions \(f(x)\) and \(g(x)\) are also
studied by \citet*{Carayol12}.)

\subsection{The central limit law of the one-in-core}

In this section we prove the part of Theorem~\ref{thm:CLL} about
\(\onecoresize\). The rest of the theorem appears as corollaries in
Section~\ref{sec:dag}.  Let \(\oneshift =
{\onecoresize - \nuk n }\).  Then \(\oneshift\) takes values in 
\(
[n] - \nuk n \equiv \{s : \nuk n + s \in [n] \}.
\)
As Theorem~\ref{thm:LLL} shows, whp \(\oneshift \le \err\) for all fixed
\(\delta \in (0,1/2) \). Thus it suffices to consider only the probability
that \(\oneshift\) takes value in the set
\[
\cJ_n \equiv ([n] - \nuk n) \cap \left[-\err,\err \right],
\]
for some fixed \(\delta \in (0,1/2)\).
Thus the characteristic function of \(\oneshift/\sqrt{n}\) is
\begin{align*}
\phi_{n}(t) 
& = \sum_{s \in ([n]-\nuk n) \setminus \cJ_n} e^{its/\sqrt{n}} \p{\oneshift = s} +
\sum_{s \in \cJ_n} e^{its/\sqrt{n}} \p{\oneshift = s} \\
& = o(1) + \sum_{s \in \cJ_n} e^{its/\sqrt{n}} \p{\oneshift = s}.
\end{align*}

Let \(\cS\) be a set of vertices with \(|\cS| = \nuk n + s\) for some \(s \in \cJ_n\).
Recall that \(\onecore = \cS\) if and only if \(\cS\) is a
\(k\)-surjection and \(\randdfa[\cS^c]\) is acyclic, two events that are independent.
By Theorem~\ref{thm:cyc:v} in Section~\ref{sec:sub:cycle},
\(\p{\text{$\randdfa[\cS^c]$ is
acyclic}} \sim 1-ke^{{-\tauk}}.\) 
Also recall that {\(K_{x}\)} counts the number of \(k\)-surjections of size \(x\).
It follows from Lemma~\ref{lem:k:surj} that
\begin{align*}
\p{\oneshift = s}
& = \sum_{\cS \subseteq [n]:|\cS|=\nuk n + s} \p{\onecore = \cS} \\
& = \sum_{\cS \subseteq [n]:|\cS|=\nuk n + s} \p{\text{\(\cS\) is a \(k\)-surjection}}
\times \p{\randdfa[\cS^c] \text{ is acyclic}} \\
& \sim (1-k e^{{-\tauk}}) \e{K_{\nuk n + s}} \\
& = \sqrt{\frac{1-ke^{-\tauk}}{2 \pi}} ~ \frac{1}{\sqrt{n}} ~
g\left(\nuk +  \frac s n\right) ~ 
\left[ f\left(\nuk + \frac s n \right) \right]^{n},
\end{align*}
where \(K_{x}\), \(f(x)\) and \(g(x)\) are defined as in the previous subsection.

If \(s \in \cJ_n\), then Lemma~\ref{lem:function:h} in the appendix shows that
\[
g\left( \nuk + \frac s n \right) 
= \left(1 + O\left( \frac{|s|}{{n}} \right)  \right)\frac{1}{\sigma_k\sqrt{1-k
    e^{-\tauk}}},
    \]
and
\[
f\left( \nuk + \frac s n \right) 
= \exp\left\{- \frac{s^2}{2 \sigma_k^2 n^2} \right\}  + O\left(
\frac{|s|^3}{{n^3}}
\right).
\]
Therefore, choosing \(\delta\) small enough, e.g., \(\delta = 1/9\), we have
\begin{align*}
    \sum_{s \in \cJ_n} e^{its/\sqrt{n}} \p{\oneshift = s}
& \sim \frac{1}{\sqrt{2 \pi \sigma_k^2}} \frac{1}{\sqrt{n}}
\sum_{s \in \cJ_n}
e^{its/\sqrt{n}} \exp\left\{-\frac{s^2}{2 \sigma_k n}\right\} 
\\
& = o(1) + \frac{1}{\sqrt{2 \pi \sigma_k^2}} ~ \int_{-n^{\delta}}^{n^{\delta}}
e^{itx} \exp\left\{-\frac{x^2}{2 \sigma_k^2}\right\} ~ {\mathrm d}x \\
& = o(1) + \frac{1}{\sqrt{2 \pi \sigma_k^2}} ~ \int_{-\infty}^{\infty}
e^{itx} \exp\left\{-\frac{x^2}{2 \sigma_k^2}\right\} ~ {\mathrm d}x \\
& = o(1) + \exp\left( \frac{\sigma_k^2 t^2}{2} \right).
\end{align*}
Thus the characteristic function of \(\oneshift/\sqrt{n}\) converges to
\(\exp(\sigma_k^2t^2/2)\), the characteristic function of \(\sigma_k \cZ\). It follows
from the central limit theorem that \(\oneshift/\sqrt{n}\) converges to
\(\sigma_k \cZ\) in distribution. Note that using the estimates of this section, we actually have
a local limit theorem for {\(\onecoresize\)}.

\section{The structure of the directed acyclic graph}

\label{sec:dag}

\subsection{De-randomizing the giant}

Since a \scc{} induces a sub-digraph in which each vertex has in-degree at
least one, a closed \scc{} is also a \(k\)-surjection.  Lemma~\ref{lem:k:surj}
implies that whp all \(k\)-surjections are of sizes in \(\cI_n \equiv
\inrange\). When this happens, as \(\nuk > 1/2\) (Lemma~\ref{lem:constant}),
there exists one and only one closed \scc{} and it is \(\giant\). And
if \(\giant\) is the only closed \scc{}, then every vertex 
must be able to reach it. This can be summarized as:
\begin{lemma}
    Whp \(\giantsize \in \cI_n\) and \(\giant\) is reachable from all vertices.
    \label{lem:giant:size}
\end{lemma}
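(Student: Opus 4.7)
The plan is to deduce the lemma from Lemma~\ref{lem:k:surj} via the observation that every closed \scc{} is a \(k\)-surjection, combined with the numerical fact \(\nuk > 1/2\).

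First I would check the structural claim that if \(\cS \subseteq [n]\) induces a strongly connected and closed sub-digraph, then \(\cS\) is a \(k\)-surjection. Closedness is immediate. For surjectivity, any \(v \in \cS\) lies on a cycle within \(\cS\), so some vertex of \(\cS\) points to \(v\); hence \(\randdfa[\cS]\) has minimum in-degree one. Applied to every closed \scc{}, this means that on the event \(\{\sum_{s \notin \cI_n} K_s = 0\}\) -- which has probability \(1 - o(1)\) by Lemma~\ref{lem:k:surj} -- every closed \scc{} of \(\randdfa\) has size in \(\cI_n\).

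Next I would invoke the inequality \(\nuk > 1/2\) (Lemma~\ref{lem:constant}, cited in the excerpt for this very purpose) to deduce uniqueness. Any set in \(\cI_n = \inrange\) has more than \(n/2\) elements for \(n\) large. Since two distinct \scc{}s of a digraph are vertex-disjoint, at most one \scc{} can have size exceeding \(n/2\). Combined with the previous paragraph, whp there is at most one closed \scc{}. On the other hand, the condensation \dagr{} \(\randacycdfa\) always has at least one sink, and a sink \scc{} is closed. Therefore whp \(\randdfa\) has a unique closed \scc{}, which must then be \(\giant\), and \(\giantsize \in \cI_n\).

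Finally, for reachability I would use the standard fact that in any finite digraph, every vertex reaches some sink of the condensation \dagr{}, i.e., some closed \scc{}. On the whp event that \(\giant\) is the only closed \scc{}, every vertex therefore reaches \(\giant\). This gives the second half of the lemma. There is no real obstacle here: the entire argument is a short combinatorial corollary of Lemma~\ref{lem:k:surj}, and the only thing to be careful about is checking that the ``closed \scc{} \(\Rightarrow\) \(k\)-surjection'' implication requires no size hypothesis, so that even hypothetical tiny closed \scc{}s are ruled out by the bound on \(\sum_{s \notin \cI_n} K_s\).
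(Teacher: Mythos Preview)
Your proposal is correct and follows essentially the same argument as the paper: closed \scc{}s are \(k\)-surjections, Lemma~\ref{lem:k:surj} forces all \(k\)-surjections into \(\cI_n\) whp, and \(\nuk > 1/2\) then yields uniqueness and hence reachability. Your write-up is slightly more explicit than the paper's one-paragraph justification (you spell out existence via the sink of the condensation \dagr{} and flag that the implication needs no size hypothesis), but there is no substantive difference.
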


Since \(\muk \equiv 1-\tauk/k \equiv 1 - \nuk\), the above lemma implies that
whp \(|\giantoutsize - \muk n| \le \err\).  Thus the structure of
\(\randdfa[\giantout]\), the sub-digraph induced by
\(\giantout\equiv[n]\setminus\giant\), should be close to that of a sub-digraph
induced by a fixed set of vertices whose size is close to \(\muk n\). Formally,
we have:
\begin{lemma}
    Let \(f_n\) be a sequence of integer-valued functions on a sequence of digraphs.  Let \(X\) be
    an integer-valued random variable.  If there exists a sequence
    \(\varepsilon_n \to 0\) such that
    \[
    \sup_{\vin \subseteq [n]:\vinsize \in \Iin}\dtv{f_n(\randdfa[\vout]), X}
    \le \varepsilon_n,
    \]
    where
    \(\vout \equiv [n] \setminus \vin\) and
    \(\dtv{\,\cdot\,,\,\cdot}\) denotes the total variation distance, then
    \[
        f_n(\randdfa[\giantout]) \inlaw X.
    \]
    \label{lem:giant:deterministic}
\end{lemma}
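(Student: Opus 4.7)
The plan is to condition on the vertex set of $\giant$ and exploit the independence structure of $\randdfa$. By Lemma~\ref{lem:giant:size}, $\p{|\giant| \notin \cI_n} = o(1)$, so it suffices to handle the case $|\giant| \in \cI_n$. For each fixed $\cS \subseteq [n]$ with $|\cS| \in \cI_n$, I would first show that, for $n$ large enough,
\[
\{\giant = \cS\} \;=\; \{\cS \text{ is a closed \scc{} in } \randdfa\}.
\]
The non-trivial direction holds because \scc{}s are vertex-disjoint: any competing closed \scc{} lies inside $\cS^c$ and hence has size at most $|\cS^c|\le \muk n + n^{1/2+\delta}$; since $\nuk > 1/2$ by Lemma~\ref{lem:constant}, this is strictly less than $|\cS|\ge \nuk n - n^{1/2+\delta}$ for all large $n$, so $\cS$ is automatically the unique largest closed \scc{} and the tie-breaking rule is irrelevant.

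Next, the event ``$\cS$ is a closed \scc{}'' is determined solely by the $k|\cS|$ out-arcs from vertices in $\cS$: \emph{closedness} says every such arc lands in $\cS$, and given closedness, the strong connectivity of $\randdfa[\cS]$ involves only these same arcs (maximality as an \scc{} is then automatic, since a closed set cannot be reached back from its complement). In contrast, $\randdfa[\cS^c]$ is a function of the out-arcs from $\cS^c$ alone. Since in $\randdfa$ all $kn$ arc-heads are independent, the sub-digraph $\randdfa[\cS^c]$ is independent of the event $\{\giant = \cS\}$, so its conditional law equals its unconditional law.

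Combining these observations with the hypothesis gives
\[
\dtv{f_n(\randdfa[\cS^c]) \mid \giant = \cS,\, X} \;=\; \dtv{f_n(\randdfa[\cS^c]),\, X} \;\le\; \varepsilon_n
\]
uniformly in $\cS$ with $|\cS| \in \cI_n$. Averaging over $\cS$ weighted by $\p{\giant = \cS}$ and peeling off the exceptional event yields
\[
\dtv{f_n(\randdfa[\giantout]),\, X} \;\le\; \p{|\giant|\notin \cI_n} + \varepsilon_n \;=\; o(1),
\]
which gives the stated convergence in distribution.

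The only delicate point is the reduction $\{\giant=\cS\} = \{\cS \text{ closed \scc{}}\}$ carried out in the first step. Without it, conditioning on $\giant=\cS$ would carry implicit information about the arcs from $\cS^c$ (namely, that they do not themselves form a closed \scc{} rivalling $\cS$ in size), and the clean independence argument would break down. The inequality $\nuk > 1/2$ is exactly what makes the maximality clause in the definition of $\giant$ trivially satisfied for sets of size $\sim \nuk n$.
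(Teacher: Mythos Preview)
Your proposal is correct and follows essentially the same approach as the paper: condition on $\giant=\cS$ for $|\cS|\in\cI_n$, use $\nuk>1/2$ to argue that this event is determined solely by the out-arcs from $\cS$ (so that it is independent of $\randdfa[\cS^c]$), and then average using Lemma~\ref{lem:giant:size} to handle the exceptional set. If anything, you are more explicit than the paper about the crucial reduction $\{\giant=\cS\}=\{\cS\text{ is a closed \scc{}}\}$ and why maximality and tie-breaking come for free once $|\cS|>n/2$; the paper compresses this into the single remark that ``the event $[\giant=\vin]$ depends only on the induced sub-digraph $\randdfa[\vin]$.''
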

\begin{proof}
    Define the event \(E_n = [\giantsize \in \Iin]\). Let \(m\) be an integer,  let
    \(\vin \subseteq [n]\) be a fixed set of vertices with \(\vinsize \in
    \Iin\).  
    Recall that since \(\nuk > 1/2\), \(\vinsize > n/2 \) for large \(n\).  Thus the event \([\giant = \vin]\) depends
only on the induced sub-digraph \(\randdfa[\vin]\), which is independent of \(\randdfa[\vout]\).  Therefore the two events \([\giant = \vin]\) and
\([f_n(\randdfa[\vout]) = m]\) are independent.
    Using this observation and Lemma~\ref{lem:giant:size}, we have
    \begin{align*}
        & \p{f_n(\giantoutgraph) = m} \\
        & = \p{[f_n(\giantoutgraph) = m] \cap E_n^c} + \p{[f_n(\giantoutgraph) = m] \cap E_n} \\
        & = o(1) + \sum_{\vin \subseteq [n]:|\vin|\in\Iin} \p{f_n(\voutgraph) = m
        ~|~ \giant = \vin} \p{\giant = \vin} \\
        & \le o(1) + \sum_{\vin \subseteq [n]:|\vin|\in\Iin} 
        (\p{X = m} + \varepsilon_n) \p{\giant = \vin} \\
        & \le o(1) + \p{X=m}.
    \end{align*}
    Similarly we have \(\p{f_n(\giantoutgraph) =m } \ge \p{X=m} + o(1)\).
    Since this applies to all integers \(m\), \(f_n(\giantoutgraph) \inlaw X\).
\end{proof}

\begin{corollary}
    Let \(\cE_n\) be a sequence of sets of digraphs.  If there exists a sequence
    \(\varepsilon_n \to 0\) such that
    \[
    \sup_{\vin \subseteq [n]:\vinsize \in \Iin}\p{\voutgraph \notin \cE_n} \le
    \varepsilon_n,
    \]
    then whp \(\giantoutgraph \in \cE_n\).
    \label{cor:giant:determnistic}
\end{corollary}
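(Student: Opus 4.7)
The plan is to deduce this corollary as a direct specialization of Lemma \ref{lem:giant:deterministic} by using the indicator of the bad event in place of a general integer-valued statistic. Concretely, I would define
\[
f_n(G) \equiv \iverson{G \notin \cE_n} \in \{0,1\},
\]
and take the limit random variable \(X\) to be the constant \(0\). For any fixed \(\vin \subseteq [n]\) with \(\vinsize \in \Iin\),
\[
\dtv{f_n(\voutgraph),\, 0} \;=\; \p{f_n(\voutgraph) = 1} \;=\; \p{\voutgraph \notin \cE_n} \;\le\; \varepsilon_n,
\]
so the hypothesis of Lemma \ref{lem:giant:deterministic} is satisfied with the same sequence \(\varepsilon_n \to 0\). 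The lemma then gives \(f_n(\giantoutgraph) \inlaw 0\), and since \(f_n(\giantoutgraph)\) is \(\{0,1\}\)-valued this convergence in distribution is equivalent to \(\p{f_n(\giantoutgraph) = 0} \to 1\), i.e.\ \(\p{\giantoutgraph \in \cE_n} \to 1\), which is the claim.

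If one prefers a self-contained argument, the same proof as that of Lemma \ref{lem:giant:deterministic} can be rewritten almost verbatim: on the event \(E_n = [\giantsize \in \Iin]\), which holds whp by Lemma \ref{lem:giant:size}, decompose
\[
\p{\giantoutgraph \notin \cE_n} \;\le\; \p{E_n^c} + \sum_{\vin:\vinsize \in \Iin} \p{\voutgraph \notin \cE_n \mid \giant = \vin}\,\p{\giant = \vin},
\]
use the key observation already exploited in the lemma---that for \(\vinsize > n/2\) the event \([\giant = \vin]\) depends only on \(\randdfa[\vin]\) and is therefore independent of \(\randdfa[\vout]\)---to drop the conditioning, and bound each summand uniformly by \(\varepsilon_n\).

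There is no real obstacle here: the substantive work (handling the dependence between \(\giant\) and \(\randdfa[\giantout]\), and invoking Lemma \ref{lem:giant:size}) has already been absorbed into Lemma \ref{lem:giant:deterministic}. The only bookkeeping point worth mentioning is that a total variation bound to a constant random variable reduces to a tail probability bound, which is what allows the indicator reduction to go through cleanly.
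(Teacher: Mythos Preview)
Your proof is correct and essentially identical to the paper's: the paper also reduces to Lemma~\ref{lem:giant:deterministic} via an indicator, taking \(X \equiv 1\) and \(f_n\) the indicator that a digraph is \emph{in} \(\cE_n\), while you use the complementary choice \(X \equiv 0\) and the indicator of being \emph{not} in \(\cE_n\). These are interchangeable and the argument is the same.
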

\begin{proof}
    This follows from the previous lemma by taking \(X \equiv 1\) and \(f_n\) to be the
    indicator function that a digraph is in \(\cE_n\).  
\end{proof}

The rest of this section proves Theorem~\ref{thm:cycle} and
Theorem~\ref{thm:dag}. But instead of working on \(\giantout\) directly, we prove
similar theorems on fixed sets of vertices, and then apply the above lemma or its
corollary to get the final result.

\subsection{Cycles outside the giant}
\label{sec:sub:cycle}

In this subsection, we show the following:
\begin{thm}
    \label{thm:cyc:v}
    Let \(\omega_n \to \infty\) be an arbitrary sequence.
    There exists a sequence \(\varepsilon_n = o(1)\) such that for all fixed
    sets of vertices \(\vin \subseteq [n]\) with \(\vinsize \in \Iin\), we have:
    \begin{enumerate}[(a)]
        \item Let \(L_{n}^{*}\) be the length of the longest cycle in
            \(\voutgraph\). Then \(\p{L_n^{*} > \omega_n} \le \varepsilon_n\).
        \item The probability that \(\voutgraph\) contains vertex-intersecting
            cycles is at most \(\varepsilon_n\).
        \item Let \(\cycl\) be the number of cycles of length \(\ell\) in
            \(\voutgraph\).
            Let \(X_\ell = \Poi({(k \muk)^{\ell}}/{\ell})\). Then for all fixed
            \(\ell\), \(\dtv{\cycl,X_{\ell}} \le \varepsilon_n.\)
        \item Let \(\cycnum\) be the number of cycles in \(\voutgraph\). Let \(X =
            \Poi(\log\frac{1}{1-k \muk})\). Then \(\dtv{\cycnum,X} \le
            \varepsilon_n.\) As a result, \(|\p{\cycnum = 0} - (1-ke^{-\tauk})| \le 2
            \varepsilon_n\).
    \end{enumerate}
\end{thm}
\noindent Theorem~\ref{thm:cycle} follows from the above theorem and
Lemma~\ref{lem:giant:deterministic}. Our proof is inspired by
\citeauthor{Cooper2004}'s work on the directed configuration
model~\citep{Cooper2004}. Note that the Cooper-Frieze model is different from that studied by us.
In their model, both in-degrees and out-degrees are predetermined, whereas we require
all out-degrees to be \(k\) but the in-degrees are random.

The intuition behind Theorem~\ref{thm:cyc:v} is that when two cycles share
vertices, they contain fewer vertices than arcs. So if we fix the ``shape'' of a
pair of such cycles, the number of ways to label them times the probability
that they both exist is \(o(1)\).  Thus whp cycles in \(\vout\) are
vertex-disjoint and the total number of cycles has a distribution close to a
sum of independent indicator random variables.

In the following proof, instead of finding the exact
\(\varepsilon_n\), we derive implicit \(o(1)\) upper bounds for probabilities and total
variation distances which only requires that \(\vinsize \in \Iin\).
\begin{lemma} \label{lem:cyc:long}
    Let \(\cycsmall \equiv \sum_{1 \le \ell \le \omega_n}
    \cycl\). Then \(\p{\cycnum \ne \cycsmall} = o(1)\).
\end{lemma}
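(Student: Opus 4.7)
The plan is a straightforward first-moment bound: estimate $\e{\cycl}$ for each $\ell$, show that it decays geometrically at a rate strictly less than $1$, and sum the tail beyond $\omega_n$.

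First I would estimate $\e{\cycl}$. For any ordered tuple $(v_1,\ldots,v_\ell)$ of distinct vertices in $\vout$, the number of arcs from $v_i$ to $v_{i+1}$ (with cyclic indexing) is $\Bin(k,1/n)$, and these $\ell$ counts are independent because they depend on the $k$ out-arcs of distinct vertices. Hence the expected number of $\ell$-tuples of arcs realising the cycle on this vertex tuple is exactly $(k/n)^\ell$. Summing over ordered tuples of distinct vertices in $\vout$ and dividing by $\ell$ (each cycle is counted once per cyclic rotation),
\[
\e{\cycl} \;\le\; \frac{1}{\ell}\Bigl(\frac{k|\vout|}{n}\Bigr)^{\ell}.
\]

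Next, the hypothesis $|\vin|\in\Iin$ yields $|\vout|\in\outrange$ and so $k|\vout|/n = k\muk + O(n^{-1/2+\delta})$. The defining relation $e^{-\tauk}=1-\tauk/k$ gives $k\muk = k-\tauk$, and the strict inequality $\tauk > k-1$ (equivalently $k\muk<1$) recorded in Lemma~\ref{lem:constant} shows that the base of the geometric sequence is bounded away from $1$. Therefore a constant $\beta<1$ and a threshold $n_0$, both independent of $\vin$, can be chosen so that $k|\vout|/n \le \beta$ for every $n\ge n_0$.

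Finally, summing the geometric tail,
\[
\e{\cycnum-\cycsmall} \;=\; \sum_{\ell>\omega_n}\e{\cycl} \;\le\; \sum_{\ell>\omega_n}\frac{\beta^\ell}{\ell} \;=\; O\!\bigl(\beta^{\omega_n}/\omega_n\bigr) \;=\; o(1),
\]
since $\omega_n\to\infty$. Markov's inequality then gives $\p{\cycnum\ne\cycsmall}=\p{\cycnum-\cycsmall\ge 1}=o(1)$, uniformly in $\vin$ with $|\vin|\in\Iin$. No real obstacle arises: the single nontrivial input is the strict inequality $k\muk<1$, the same inequality that makes the backward branching process from a vertex supercritical and hence $\nuk$ positive.
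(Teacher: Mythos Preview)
Your proposal is correct and follows essentially the same approach as the paper: a first-moment bound on $\cycl$ showing geometric decay at rate $k\muk<1$, followed by summing the tail and applying Markov's inequality. The only cosmetic difference is that the paper states the exact expectation $\e{\cycl}=\frac{1}{\ell}(\voutsize)_\ell k^\ell n^{-\ell}$ by counting arc-labeled cycles directly, whereas you phrase the same count via the expected number of arc-tuples and then bound $(\voutsize)_\ell\le|\vout|^\ell$.
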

\begin{proof}
    Define \((x)_{\ell} \equiv x(x-1)\cdots(x-\ell+1)\). Then the number of all
    possible cycles of length \(\ell\) is \((\voutsize)_{\ell} k^{\ell}/\ell\). 
    (Note that we are also considering the labels on arcs, which makes the counting easier.)
    And
    the probability that such a cycle exists is \(n^{-\ell}\).
    Recalling that \(\voutsize \in \outrange\), we have
    \begin{align}
        \E{\cycl} 
        = 
        \frac 1 \ell (\voutsize)_\ell k^{\ell} \left( \frac{1}{n} \right)^{\ell}  
        \le \left( k \muk \left(1+ \bigO{\errp}\right)
        \right)^\ell. \label{eq:cyc:expe}
    \end{align}
    Since \(k \muk \equiv k-\tauk < 1\) (Lemma~\ref{lem:constant}), there exists
    a constant \(c_1 < 1\) such that the above is less than
    \(c_1^{\ell}\) for \(n\) large enough.
    Since \(\cycnum \ne \cycsmall\) if and only if \(\sum_{\ell > \omega_n} \cycl \ge
    1\), 
    \[
    \p{\cycnum \ne \cycsmall} =
    \p{\sum_{\ell > \omega_n} \cycl \ge 1} \le \E{\sum_{\ell > \omega_n} \cycl}
    \le \bigO{c_1^{\omega_n} }
    = o(1).  \tag*{\qedhere}
    \]
\end{proof}

Since \(L_n^{*} > \omega_n\) if and only if \(\cycsmall \ne \cycnum\), part
\((a)\) of Theorem~\ref{thm:cyc:v} follows.  From now on let \(\omega_n = \log
\log n\). We show that:
\begin{lemma} Let \(X\) and \(X_\ell\) be as in Theorem~\ref{thm:cyc:v}. 
    Then \(\dtv{\Poi(\e{\cycsmall}), X} = o(1).\)
    And for all \(\ell \le \omega_n\),
    \(\dtv{\Poi(\e{\cycl}) ,X_\ell} = o(1).  \)
    \label{lem:cyc:poisson}
\end{lemma}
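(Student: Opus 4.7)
The plan is to reduce both total-variation statements to the fact that the means of $\cycl$ and $\cycsmall$ converge to the means of $X_\ell$ and $X$ respectively, and then invoke the standard Poisson coupling bound
\[
\dtv{\Poi(\lambda), \Poi(\mu)} \le 1 - e^{-|\mu-\lambda|} \le |\mu-\lambda|,
\]
which follows by writing $\Poi(\max(\lambda,\mu)) \stackrel{d}{=} \Poi(\min(\lambda,\mu)) + \Poi(|\mu-\lambda|)$ and coupling.

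First I would lift the asymptotic computation of $\E{\cycl}$ already established in \eqref{eq:cyc:expe}, namely
\[
\E{\cycl} = \frac{(\voutsize)_\ell k^\ell}{\ell\, n^\ell} = \frac{(k\muk)^\ell}{\ell}\bigl(1+O(n^{-1/2+\delta})\bigr)^{\ell},
\]
valid for every $\vin$ with $\vinsize \in \Iin$. For fixed $\ell$ this immediately yields $\E{\cycl} \to (k\muk)^\ell/\ell$, and the coupling bound above gives $\dtv{\Poi(\E{\cycl}), X_\ell} = o(1)$, uniformly in the choice of $\vin$.

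For the sum, the key point is a uniform error bound. Since $\omega_n = \log\log n$, we have $\omega_n \cdot n^{-1/2+\delta} = o(1)$, so $(1+O(n^{-1/2+\delta}))^\ell = 1+o(1)$ uniformly for $1 \le \ell \le \omega_n$. Consequently
\[
\E{\cycsmall} = \sum_{\ell=1}^{\omega_n} \E{\cycl} = (1+o(1))\sum_{\ell=1}^{\omega_n} \frac{(k\muk)^\ell}{\ell}.
\]
Because $k\muk = k-\tauk < 1$ by Lemma~\ref{lem:constant}, the full series $\sum_{\ell\ge 1}(k\muk)^\ell/\ell$ converges to $\log\frac{1}{1-k\muk}$, and its tail beyond $\omega_n$ is geometrically small, hence $o(1)$. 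Therefore $\E{\cycsmall} \to \log\frac{1}{1-k\muk}$, and another application of the coupling bound gives $\dtv{\Poi(\E{\cycsmall}), X} = o(1)$, again uniformly in $\vin$.

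The main obstacle is essentially bookkeeping: ensuring the multiplicative error $(1+O(n^{-1/2+\delta}))^\ell$ is controlled uniformly up to the cutoff $\omega_n$, and verifying that the tail of the target series is negligible. Once these two uniform estimates are in place, the coupling bound on Poisson total variation finishes both claims with a single line, and the uniformity in $\vin$ is exactly what is needed to combine with Lemma~\ref{lem:giant:deterministic} in order to transfer the result from a deterministic $\vin$ to $\giantout$ later.
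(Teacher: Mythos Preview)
Your proposal is correct and matches the paper's approach exactly: show that $\e{\cycl}$ and $\e{\cycsmall}$ converge to the target Poisson means, then use that convergence of means implies convergence of Poisson laws in total variation (the paper is terser, simply writing ``which implies the lemma'' after the mean computation, while you make the coupling bound explicit). One small remark: the lemma asks for all $\ell \le \omega_n$, not just fixed $\ell$, but your own uniform estimate $(1+O(n^{-1/2+\delta}))^{\ell} = 1+o(1)$ for $\ell \le \omega_n$ already covers this case as well.
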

\begin{proof}
    For all \(\ell \le \omega_n\), by \eqref{eq:cyc:expe} we have
\begin{align*}
    \e{\cycl}
= \frac 1 \ell \left(\muk n + \bigO{\err}\right)_\ell k^{\ell} \left( \frac{1}{n}
\right)^{\ell}
= \frac {(k \muk)^\ell} \ell (1 + O(\ell \errp)).
\end{align*}
Thus
\begin{align*}
\e{\cycsmall} 
& = \sum_{1 \le \ell \le \omega_n} \E{\cycl} 
= \log \left(\frac{1}{1-k \muk}\right) + \bigO{\omega_n \errp}.
\end{align*}
Therefore \(\e{\cycsmall} \to \e X\) and \(\e{\cycl} \to \e X_\ell\), which implies
the lemma.
\end{proof}

\begin{proof}[Proof of Theorem~\ref{thm:cyc:v}]
By the two previous lemmas, it suffices to show that 
\[
\dtv{\cycsmall,\Poi(\e{\cycsmall})} = o(1), \quad
\dtv{\cycl,\Poi(\e{\cycl})} = o(1)\quad \text{for all fixed \(\ell\)}.
\]
We prove this by using a theorem of
\citeauthor{Arratia1989}~\citep{Arratia1989}. 
(A similar result is proved by \citet{Barbour1992poisson}).
The method is known as the Chen-Stein
method because it was first developed by \citet*{Chen1975} who applied
\citeauthor{Stein1972}'s theory~\citep{Stein1972} on probability metrics to
Poisson distributions.

Let \(\cC\) be the space of all possible cycles of length at most \(\omega_n\) in
\(\voutgraph\). For \(\alpha \in \cC\), let \(\cB_\alpha \subseteq \cC\) be the set of
cycles that are vertex-intersecting with \(\alpha\). Let \(\indd{\alpha}\) be the
indicator that a cycle \(\alpha\) appears in \(\voutgraph\). 
Define
\begin{align*}
    b_1 \equiv \sum_{\alpha \in \cC} \sum_{\beta \in \cB_\alpha} \e \indd{\alpha} \e
    \indd{\beta}, \qquad
    b_2 \equiv \sum_{\alpha \in \cC} \sum_{\beta \in \cB_\alpha:\beta\ne\alpha}
    \E{\indd{\alpha}\indd{\beta}}, \qquad
    & b_3 \equiv \sum_{\alpha \in \cC} s_\alpha,
\end{align*}
where
\[
s_{\alpha} = 
\e 
    \left| 
    \E{\indd{\alpha}| \sigma\left( \indd{\beta}:\beta \in \cC \setminus \cB_{\alpha} \right)} 
    - 
    \e \indd{\alpha} 
\right|,
\]
and \(\sigma(\cdot)\) denotes the sigma algebra generated by \((\cdot)\).
Theorem~1 of \citet{Arratia1989} states that
\[
\dtv{ \cycsmall, \, \Poi(\e{\cycsmall}) } \le 2(b_1 + b_2 + b_3).
\]
If \(\beta \in \cC \setminus \cB_{\alpha}\), then \(\alpha\) and \(\beta\)
are vertex-disjoint. Thus \(\indd{\alpha}\) and \(\indd{\beta}\) are independent and
\(s_{\alpha} = 0\) for all \(\alpha \in \cC\), i.e., \(b_3 = 0\).  It suffices
to show that \(b_1\) and \(b_2\) are \(o(1)\).

Let \(|\alpha|\) denote the length of a cycle
\(\alpha\).  Fix \(\ell_1 \le \omega_n\) and \(\ell_2 \le \omega_n\). There are
at most \(\voutsize^{\ell_1}
k^{\ell_1}\) cycles of length \(\ell_1\).  For \(|\alpha| = \ell_1\),
there are at most \(\ell_1 \voutsize^{\ell_2-1} k^{\ell_2}\) cycles of length
\(\ell_2\) that share at least one vertex with \(\alpha\).  Since
\((\voutsize)^{\ell} = (1+o(1))(\muk n)^{\ell}\) for \(\ell \le \omega_n\),
\begin{align*}
     \sum_{\alpha \in \cC:|\alpha| = \ell_1} \sum_{\beta \in \cB_\alpha:|\beta| = \ell_2} \e \indd{\alpha} \e
    \indd{\beta}
    & \le (1+o(1)) \left[(\muk n)^{\ell_1} k^{\ell_1}  \right]
    \left[\ell_1 (\muk n)^{\ell_2-1} k^{\ell_2}  \right]
    \left( \frac{1}{n} \right)^{\ell_1+\ell_2} \\
    & = (1+o(1)) \frac 1 {\muk n} \left[\ell_1 (\muk k)^{\ell_1} \right]
    \left[(\muk k)^{\ell_2}  \right].
\end{align*}
Therefore
\begin{align*}
    b_1 
    & = \sum_{1 \le \ell_1 \le \omega_n} \sum_{1 \le \ell_2 \le \omega_n} \sum_{\alpha \in \cC:|\alpha|
    = \ell_1} \sum_{\beta \in \cB_\alpha:|\beta| = \ell_2} \e \indd{\alpha} \e
    \indd{\beta} \\
    & \le (1+o(1)) \frac 1 {\muk n} \sum_{\ell_1 \ge 1} \sum_{\ell_2 \ge 1}
    \left[\ell_1 (k\muk)^{\ell_1} \right]
    \left[(k\muk )^{\ell_2}  \right] \\
    & \le (1+o(1)) \frac 1 {\muk n} 
    \left[\sum_{\ell_1 \ge 1}\ell_1  (k \muk)^{\ell_1} \right]
    \left[\sum_{\ell_2 \ge 1}  (k \muk)^{\ell_2} \right]
\end{align*}
which is \(\bigO{1/n}\) since both sums converge.

\smallskip

    \mbox{}\indent
To compute \(b_2\), we upper bound the number of pairs of vertex-intersecting cycles that could
possibly appear in \(\randdfa[\vout]\) at the same time.
Let \(\alpha\) and \(\beta\) be such a pair.
Let \(V(\alpha), A(\alpha), V(\beta), A(\beta)\) be the vertex set and (labeled) arc set of \(\alpha\) and
\(\beta\) respectively. Let \(\alpha \cup \beta\) be the digraph of vertex set \(V = V(\alpha) \cup
V(\beta)\) and arc set \(A = A(\alpha) \cup B(\beta)\). 
Assume that \(|V|=s\) and \(|A|=s+t\).
Note that as \(\alpha\) and \(\beta\) share at least one vertex, \(t \ge 1\).
Since \(V \subset [n]\), we can relabel the \(s\) vertices in \(\alpha \cup \beta\) with \([s]\)
such that the order of the vertex labels is maintained.
The result is a digraph with vertex set \([s]\) and \(s+t\) arcs labeled with
\([k]\). There are at most \( (s^2)^{s+t} k^{s+t}\) such digraphs, since there are at most
\(s^2\) choices of endpoints and \(k\) choices of labels for each of the \(s+t\) arcs.
Each digraph of this type corresponds to at most \(\binom{\voutsize}{s} \le \voutsize^{s}\) pairs of cycles like
\(\alpha\) and \(\beta\). Thus there are at most \(\voutsize^{s} (s^2)^{s+t} k^{s+t}\) such pairs.
Summing over \(s\) and \(t\), we have
\begin{align*}
    b_2 
    &
    \le 
    \sum_{1 \le s \le 2 \omega_n}
    \sum_{1 \le t \le 2 \omega_n}
    \voutsize^{s} (s^{2})^{s+t} k^{s+t} \E{\indd{\alpha}\indd{\beta}} \\
    &
    \le
    \sum_{1 \le s \le 2 \omega_n}
    \sum_{1 \le t \le 2 \omega_n}
    \left( e^{-\tauk}n + \err \right)^s 
    (2 \omega_n)^{2 \times 4\omega_n} 
    k^{s+t} 
    \frac{1}{n^{s+t}}
    \\
    &
    \le
    (2 \omega_n)^{8\omega_n}
    \sum_{1 \le s \le 2 \omega_n}
    \sum_{1 \le t \le 2 \omega_n}
    \frac{\left( n + e^{\tauk} \err \right)^s}{n^{s}} (k e^{-\tauk})^{s} \frac{k^t}{n^t}
    \numberthis \label{eq:b2} 
    \\
    &
    \le
    \bigO{\frac{1}{n}}
    (2 \omega_n k)^{8\omega_n} 
    \sum_{1 \le s \le 2 \omega_n}
    \sum_{1 \le t \le 2 \omega_n}
    (1+e^{\tauk}n^{-1/2+\delta})^{2\omega_n}
    \qquad (ke^{-\tauk} < 1/2)
    \\
    &
    \le
    \bigO{\frac{1}{n}}
    (2 \omega_n k)^{8\omega_n} 
    (2 \omega_n)^{2}
    \left( 
        1 + \bigO{n^{-1/2+\delta}\omega_n}
    \right)
    \to 0,
\end{align*}
where the last step we use that \(\omega_n = \log \log n\).

Thus part (d) of Theorem~\ref{thm:cyc:v} for \(\cycnum\) is proved.  We can prove
part (c) for \(\cycl\) using the same method by limiting \(\cC\) to contain only
cycles of a fixed length \(\ell\).  Note that the above inequality shows that the
probability that there exist vertex-intersecting cycles in \(\voutgraph\) is
\(o(1)\),
thus part (b) is also proved.
\end{proof}

The method used above can be easily adapted to prove similar
results for undirected cycles, like the following lemma which is needed in the study
of spectra in \(\giantoutgraph\):
\begin{lemma} 
    Let \(\psi_n \to \infty\) be an arbitrary sequence.
    There exists a sequence \(\varepsilon_n = o(1)\) such that
    for all fixed sets of vertices \(\vin\) with \(\vinsize \in \Iin\), we have:
    \begin{enumerate}[(a)]
        \item The probability that \(\voutgraph\) contains an undirected cycle of
            length greater than \(\psi_n\) is at most \(\varepsilon_n\). 
        \item The probability that \(\voutgraph\) contains vertex-intersecting
            undirected cycles is at most \(\varepsilon_n\).
    \end{enumerate}
    \label{lem:cyc:undirected}
\end{lemma}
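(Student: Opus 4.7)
The plan is to mirror the proof of Theorem~\ref{thm:cyc:v}, with the single modification that each edge of an undirected cycle corresponds to one of two possible directed arcs in the underlying digraph, each carrying a label in $[k]$. This multiplies the enumeration count for an undirected cycle of length $\ell$ by a factor of $2^{\ell}$ compared with the directed case.

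For part~(a), I first bound $\E{\ucycl}$, the expected number of undirected cycles of length $\ell$ in $\voutgraph$. Fixing a cyclic sequence of $\ell$ distinct vertices (up to rotation and reflection) together with an orientation and label for each of its $\ell$ edges gives $(\voutsize)_{\ell}(2k)^{\ell}/(2\ell)$ configurations, and each specified labeled arc exists with probability $1/n$. Using $\voutsize \in \outrange$ as in \eqref{eq:cyc:expe},
\[
\E{\ucycl} \le \frac{(\voutsize)_{\ell}}{2\ell}(2k)^{\ell} n^{-\ell} \le \frac{(2k\muk)^{\ell}}{2\ell}\bigl(1 + O(\ell\, n^{-1/2+\delta})\bigr).
\]
Next I would verify that $2k\muk = 2ke^{-\tauk} < 1$ for every $k \ge 2$. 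Letting $F(\tau) := \tau - k(1-e^{-\tau})$, the function $F$ is strictly increasing on $(\log k,\infty)$ and $\tauk$ is its unique root there; since $F(\log(2k)) = \log(2k) - k + 1/2 < 0$ for every $k \ge 2$ (the function $k \mapsto k - \log(2k) - 1/2$ is positive at $k=2$ and has derivative $1 - 1/k > 0$), we obtain $\tauk > \log(2k)$, i.e.\ $2k\muk < 1$. Summing over $\ell > \psi_n$ then gives $\p{\exists \text{ undirected cycle of length } > \psi_n} \le \E{\sum_{\ell > \psi_n}\ucycl} = O(c^{\psi_n}) = o(1)$ for some $c < 1$, uniformly over $\vin$ with $\vinsize \in \Iin$.

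For part~(b), I would reuse the $b_2$-style double counting used in the proof of Theorem~\ref{thm:cyc:v}. By part~(a) applied with $\psi_n = \log\log n$, with probability $1-o(1)$ every undirected cycle in $\voutgraph$ has length at most $\log\log n$, so it suffices to bound the expected number of pairs $(\alpha,\beta)$ of vertex-intersecting undirected cycles of such lengths. If $\alpha \cup \beta$ has $s$ vertices and $s+t$ arcs in the underlying digraph, then $t \ge 1$ and $s,t = O(\log\log n)$; the number of labeled-oriented digraphs on vertex set $[s]$ with $s+t$ labeled arcs is at most $(s^2)^{s+t}(2k)^{s+t}$ (the only change from \eqref{eq:b2} being the factor $2^{s+t}$ for orientations), there are at most $\voutsize^{s}$ embeddings into $\vout$, and the joint arc-existence probability is $n^{-(s+t)}$. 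The resulting double sum differs from the bound on $b_2$ only in that $k$ is replaced by $2k$; since $2k\muk < 1$ the $s$-sum still converges and the $t$-sum contributes an extra $O(1/n)$, so the whole expression is bounded by $O(n^{-1})(2\psi_n k)^{O(\psi_n)} = o(1)$ as in \eqref{eq:b2}. By Markov's inequality, this yields part~(b).

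The main substantive new input is the elementary inequality $2k\muk < 1$ for every $k \ge 2$: it simultaneously drives the geometric tail in part~(a) and ensures convergence of the $b_2$-type double sum in part~(b). Once this is established, both statements are direct adaptations of the directed-cycle arguments from Section~\ref{sec:sub:cycle}.
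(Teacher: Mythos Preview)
Your proposal is correct and follows essentially the same approach as the paper: bound $\E{\ucycl}$ by replacing $k$ with $2k$ in the directed-cycle estimate, invoke $2k\muk<1$ for the tail sum in part~(a), and for part~(b) rerun the $b_2$ computation from Theorem~\ref{thm:cyc:v} with $ke^{-\tauk}$ replaced by $2ke^{-\tauk}$ in \eqref{eq:b2}. The only difference is cosmetic: the paper obtains $2k\muk=2(k-\tauk)<1$ directly from Lemma~\ref{lem:constant}(b), whereas you rederive it via $F(\log(2k))<0$; also, your extra factor $2^{s+t}$ in the $b_2$ count is harmless but unnecessary, since the arcs of $\alpha\cup\beta$ are already directed arcs of $\voutgraph$ and need no separate orientation.
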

\begin{proof}
    Let \(\ucycl\) be the number of undirected cycles of length \(\ell\)
    in \(\voutgraph\). Then 
    \[
    \E{\ucycl} \le \frac 1 \ell (\voutsize)^{\ell} (2k)^{\ell} \frac 1 {n^{\ell}}
    \le \left( 2k \muk (1+ {\errp}) \right)^\ell,
    \]
    where the \(2\) comes from the fact that each edge in an
    undirected cycle has two possible directions.
    Since \(2k \muk = 2(k - \tauk) < 1\) (Lemma~\ref{lem:constant}), with exact
    the same argument of Lemma~\ref{lem:cyc:long}, we can show that
    \(\E{\sum_{\ell > \psi_n} \ucycl} = o(1)\) for all \(\psi_n \to \infty\).
    Thus (a) is proved.

    Now choose \(\psi_n = \log \log n\).
    Again we can show that whp there are no vertex-intersecting undirected
    cycles of length at most \(\psi_n\) by repeating the computation of
    \(b_2\) in the proof of Theorem~\ref{thm:cyc:v} with \(ke^{-\tauk}\) replaced
    by \(2ke^{-\tauk}\) in \eqref{eq:b2}.
%
\end{proof}

\subsection{Spectra outside the giant}

In this section, we prove Theorem~\ref{thm:dag} (spectra outside the giant).
Instead of working on \(\giantout\) directly, we again prove similar results on a
fixed set of vertices and then apply Lemma~\ref{lem:giant:deterministic} to
finish the proof.

\subsubsection{The tree-like structure of some spectra}

\newcommand{\Vcond}{\vin \subseteq [n]:\vinsize\in\Iin}

We prove part (a) of Theorem~\ref{thm:dag}.  Let \(\vin \subseteq [n]\) with
\(\vinsize \in \Iin \equiv \inrange\) be a fixed set of vertices. 
For \(v \in \vout \equiv [n]
\setminus \vin\), let \(\specvout{v}\) be the spectrum of \(v\) in \(\voutgraph\), the sub-digraph induced
by \(\vout\). The following lemma shows that whp every spectrum in
\(\voutgraph\)
induces a sub-digraph that is a tree or a tree plus one extra arc:
\begin{lemma}
    We have
    \[
    \sup_{\Vcond} \p{\cup_{v \in \vout}[\arc(\randdfa[\specvout{v}]) - \specvoutsize{v} \ge 1]} 
    =o(1)
    ,
    \]
    where \(\arc(\cdot)\) denotes the number of arcs. 
    \label{lem:spec:backward}
\end{lemma}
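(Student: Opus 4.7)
The plan is to reduce the bad event---some spectrum satisfies \(\arc(\randdfa[\specvout{v}])-|\specvout{v}|\ge 1\)---to one of two substructures in \(\voutgraph\), and to bound each by a first-moment argument. A connected multigraph on \(|V|\) vertices with \(|E|\) edges has cyclomatic complexity \(|E|-|V|+1\), so the displayed event says exactly that the underlying undirected multigraph of \(\randdfa[\specvout{v}]\) has cyclomatic complexity at least \(2\); connectedness is automatic, because every \(u\in\specvout{v}\) is reachable from \(v\) by a directed path in \(\voutgraph\), and that path stays inside \(\specvout{v}\). Cyclomatic complexity at least \(2\) produces two distinct simple undirected cycles \(C_1,C_2\) in \(\randdfa[\specvout{v}]\subseteq\voutgraph\). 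These cycles either share a vertex or are vertex-disjoint: the first case is ruled out uniformly by Lemma~\ref{lem:cyc:undirected}(b), while in the second case the connected host \(\randdfa[\specvout{v}]\) contains a simple undirected path joining \(C_1\) to \(C_2\) whose interior avoids both cycles, producing a \emph{dumbbell} subgraph of \(\voutgraph\).

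It then suffices to show that \(\voutgraph\) contains no dumbbell whp. A dumbbell with cycle lengths \(\ell_1,\ell_2\) and bridge length \(m\) has \(s\equiv\ell_1+\ell_2+m-1\) distinct vertices (all in \(\vout\)) and \(s+1\) undirected edges; a routine combinatorial count of labeled undirected structures gives \((|\vout|)_s/4\), with the factors in \(\ell_1,\ell_2,m\) cancelling. Each undirected edge is realised by one of at most \(2k\) directed arcs (two orientations, \(k\) labels), and each specified arc is present in \(\randdfa\) with probability \(1/n\). Summing over the \(\bigO{s^2}\) triples \((\ell_1,\ell_2,m)\) with \(\ell_1+\ell_2+m=s+1\), and using \(|\vout|/n\le\muk+\bigO{\errp}\) together with \(2k\muk<1\) (the same constant inequality that drives Lemma~\ref{lem:cyc:undirected}),
\[
\E{\#\,\text{dumbbells in }\voutgraph}\;\le\;\frac{2k}{4n}\sum_{s}\bigO{s^2}\Bigl(\frac{2k|\vout|}{n}\Bigr)^{s}\;=\;\bigO{1/n}.
\]
Markov's inequality then yields the uniform \(o(1)\) bound, and combining the two cases finishes the proof.

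The main technical obstacle is the degenerate low-length cases: self-loops (\(\ell_i=1\)) and pairs of parallel arcs (\(\ell_i=2\)) have slightly different combinatorial counts from the generic \(\ell_i\ge 3\) situation and must either be treated separately or absorbed into Lemma~\ref{lem:cyc:undirected}(b)---for example, two self-loops at the same vertex are already vertex-intersecting undirected cycles. A secondary subtlety is that the reachability path from \(v\) to the dumbbell never needs to be enumerated: the mere existence of a dumbbell anywhere in \(\voutgraph\) (or of vertex-intersecting cycles) is already necessary for the bad event, which is exactly what makes the first-moment estimate converge.
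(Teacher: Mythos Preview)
Your proposal is correct and follows essentially the same approach as the paper: reduce the event \(\arc(\randdfa[\specvout{v}])\ge|\specvout{v}|+1\) to the existence of two undirected cycles, invoke Lemma~\ref{lem:cyc:undirected}(b) to rule out the intersecting case, and then run a first-moment argument on the dumbbell structure using \(2k\muk<1\). The paper sidesteps your degenerate-length worry by using the cruder vertex-count bound \((\voutsize)^{r+s+t-1}\) rather than your exact \((|\vout|)_s/4\), which makes the sum over \(r,s,t\ge 1\) uniform without case analysis.
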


\begin{proof}
    For \(v \in \vout\), if \(\arc(\specvoutgraph{v}) \ge \specvoutsize{v} +
    1\), then
    \(\specvoutgraph{v}\) must contain at least two undirected cycles.  By
    Lemma~\ref{lem:cyc:undirected}, whp all undirected cycles in
    \(\specvoutgraph{v}\) are vertex-disjoint. Therefore, if
    \(\specvoutgraph{v}\)
    contains two undirected cycles, then whp they are vertex-disjoint and
    connected by an undirected path. 
    
    Let \(X_{r,s,t}\) be the number of pairs of undirected cycles of length
    \(r\)
    and \(s\) respectively that are connected by an undirected path of length
    \(t\).
    In such a structure the number of arcs is \(r+s+t\) while the number of
    vertices is \(r+s+t-1\).  Since \(\vinsize \in \Iin\), we have
    \(\voutsize = n - \vinsize \in \Iout \equiv \outrange\). Thus
    \begin{align*}
        \e X_{r,s,t}
        \le (\voutsize)^{r+s+t-1} (2k)^{r+s+t} \left( \frac 1 n
        \right)^{r+s+t} 
        \le \bigO{\frac 1 n} \left( 2 k \muk + \frac{2k}{n^{1/2-\delta}} \right)^{r+s+t}.
    \end{align*}
    Summing over all possible \(r\), \(s\) and \(t\) shows that
    \begin{align*}
        \sum_{1 \le r \le n} \sum_{1 \le s \le n} \sum_{1 \le t \le n} \e X_{r,s,t}
        & \le \bigO{\frac 1 n} \sum_{1 \le r} \sum_{1 \le s} \sum_{1 \le t}
        \left(2 k \muk + \frac{2k}{n^{1/2-\delta}}\right)^{r+s+t} \\
        & \le \bigO{\frac 1 n} \left( \sum_{1 \le i} \left(2 k \muk
        +\frac{2k}{n^{1/2-\delta}}\right)^{i} \right)^{3},
    \end{align*}
    which is \(o(1)\) since the sum in the brackets converges.
\end{proof}

\subsubsection{The maximum size of spectra}

\label{sec:dag:spec:size}

This section proves part (b) of Theorem~\ref{thm:dag} (the sizes of
spectra outside the giant).
\begin{lemma}
    Let \(\varepsilon > 0\) be a constant. Then
    \[
    \sup_{\Vcond} \p{\left| \frac{\max_{v \in \vout} \specvoutsize{v}}{\log n} 
    - 
    \frac 1 {\log(1/\lambda_k)} \right| > \varepsilon} = o(1),
    \]
    where \(\lambdak \equiv (k-{\tauk})\left( \frac{{\tauk}}{k-1}
    \right)^{k-1}\). 
    \label{lem:spec:size}
\end{lemma}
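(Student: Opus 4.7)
Fix any \(\vin \subseteq [n]\) with \(\vinsize \in \Iin\), set \(\vout \equiv [n]\setminus \vin\) and \(p \equiv \voutsize/n\); note that \(p = \muk + \bigO{n^{-1/2+\delta}}\) uniformly in \(\vin\). The plan is to couple a forward BFS from any vertex \(v \in \vout\) in \(\voutgraph\) with a subcritical Galton--Watson branching process whose offspring distribution is \(\Bin(k,p)\) and whose probability generating function is \(f(s) = (1-p+ps)^k\); since \(f'(1) = kp = k - \tauk + o(1) < 1\) by Lemma~\ref{lem:constant}, the total progeny \(T\) is a.s.\ finite. A direct computation shows that \(\rho_n \equiv \min_{s \ge 1} f(s)/s\) is attained at \(s^{*} = (1-p)/((k-1)p)\) and equals \((kp)(k(1-p)/(k-1))^{k-1} = \lambdak + o(1)\).

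\textbf{Upper bound.} In the natural BFS coupling in which each newly discovered vertex of \(\voutgraph\) contributes a \(\Bin(k,p)\) batch of arc targets, collisions with already-seen vertices can only reduce discoveries, so \(|\specvout{v}|\) is stochastically dominated by \(T\). A Chernoff bound on \(T\) with tilt at \(s = 1/\rho_n\) (for which the equation \(T(s) = sf(T(s))\) remains solvable) yields \(\p{T \ge m} \le C\rho_n^m\) for a constant \(C = C(k)\). Taking \(m_+ = \lceil (1+\varepsilon)\log n/\log(1/\lambdak)\rceil\) and a union bound over the at most \(n\) vertices in \(\vout\) gives \(\p{\max_{v \in \vout}|\specvout{v}| \ge m_+} \le n C \rho_n^{m_+} = \bigO{n^{-\varepsilon/2}} = o(1)\), uniformly in \(\vin\).

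\textbf{Lower bound.} For \(m_- = \lfloor (1-\varepsilon)\log n/\log(1/\lambdak)\rfloor\) the matching lower tail \(\p{T \ge m_-} \ge c\rho_n^{m_-}/m_-^{3/2}\) follows from the Dwass-Otter formula \(\p{T=m} = m^{-1}\p{\xi_1 + \cdots + \xi_m = m-1}\) together with a local CLT for the tilted offspring law. Exposing arcs one by one, the probability that any of the first \(m_-\) discovered arcs hits a previously visited vertex is at most \(k m_-^2/n = o(1/\sqrt{n})\), so \(\p{|\specvout{v}| \ge m_-} \ge n^{-(1-\varepsilon) + o(1)}\). Let \(N\) be the number of \(v \in \vout\) with \(|\specvout{v}| \ge m_-\); then \(\e{N} \to \infty\). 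An analogous collision estimate applied to pairs of BFS explorations shows that such events are asymptotically independent, so \(\e{N^2} \le (1+o(1))(\e{N})^2\), and the Paley-Zygmund inequality yields \(\p{N \ge 1} \to 1\) uniformly in \(\vin\).

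\textbf{Main obstacle.} The delicate point is extracting the sharp rate \(\lambdak = (k-\tauk)(\tauk/(k-1))^{k-1}\) rather than the crude walk-counting rate \(k\muk = k-\tauk\). This requires the Chernoff tilt at the unique minimizer \(s^{*} > 1\) of \(f(s)/s\) (equivalently, the tilted offspring distribution is critical), together with the matching local-limit-theorem lower tail. Controlling the second moment then reduces to the elementary collision bound, which is comfortably \(o(1)\) on the logarithmic scale \(m_{-} = \bigO{\log n}\).
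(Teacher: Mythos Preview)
Your upper bound is essentially the paper's: the same stochastic domination by a subcritical Galton--Watson tree, with your Chernoff tilt at the minimizer of \(f(s)/s\) playing the role of the paper's direct Hoeffding bound on the binomial sum \(\sum_{i\le k\omega_n}\xio{i}\). Both extract the sharp rate \(\lambdak\) and finish by a union bound.

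The lower bound is where you diverge. You invoke the Dwass--Otter identity and a local limit theorem to get \(\p{T\ge m_-}\asymp \lambdak^{m_-}/m_-^{3/2}\); the paper does exactly this (via Stirling on the binomial point probability). But then you run a second-moment argument on \(N=\sum_{v\in\vout}\ind{|\specvout{v}|\ge m_-}\), whereas the paper instead performs \(\lfloor n/(\log n)^3\rfloor\) \emph{sequential} trials from fresh yellow vertices, each with success probability at least \(\p{\Tu=k\psi_n}\) by a uniform lower coupling. The paper's device sidesteps correlations entirely: trials explore disjoint vertex sets by construction, so independence is not needed, only a uniform lower bound on the success probability over the first \(t_n\) trials.

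Your second-moment route is workable but the ``asymptotic independence'' step is not as light as you present it. The events \([|\specvout{u}|\ge m_-]\) and \([|\specvout{v}|\ge m_-]\) are \emph{positively} correlated: if the exploration from \(u\) hits \(\specvout{v}\), it inherits a large spectrum for free. To close the argument you must (i) control the contribution from \(u\in\specvout{v}\), which requires bounding \(\E{|\specvout{v}|\ind{|\specvout{v}|\ge m_-}}\) (this is \(O(m_-\,q_n)\) via the GW tail, and \(m_-\ll\e N\) saves you), and (ii) for \(u\notin\specvout{v}\), bound the probability that the exploration from \(u\) meets \(\specvout{v}\), which needs the upper bound \(|\specvout{v}|\le m_+\) to be in force. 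None of this is hard once the upper bound is established, but it is more than a ``collision estimate''; the paper's repeated-trials argument is shorter precisely because it avoids this bookkeeping.
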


The exploration of \(\specvoutgraph{v}\) can be coupled with a colouring process.
Initially, colour all vertices in \(\vin\) green, all vertices in \(\vout\) yellow,
and all arcs white. Then:
\begin{enumerate}[(i)]
    \item Colour the vertex \(v\) black, and colour the \(k\) arcs that start from
        \(v\) red. (Red arcs start from vertices in \(\specvout{v}\) but
        their endpoints are not determined yet.)
    \item Pick an arbitrary red arc. Choose its endpoint uniformly at random
        from all the \(n\) vertices. Colour this arc with the colour of its
        chosen endpoint vertex. (So a yellow arc goes to a vertex that is not
        already in \(\specvout{v}\), a black arc goes to a vertex that is already
        in \(\specvout{v}\).) If the chosen vertex is yellow, colour this vertex
        black and colour all its arcs red. 
    \item If there are no red arcs left, terminate. Otherwise go to the previous
        step.
\end{enumerate}
In the end, \(\specvout{v}\) consists of all black vertices, and arcs that start
from vertices in \(\specvout{v}\) have one of three colors: green arcs go to
\(\vin\); yellow arcs form a spanning tree of \(\specvoutgraph{v}\) rooted at
\(v\);
black arcs connect vertices in \(\specvout{v}\) but they are not part of the
yellow spanning tree, so they are in cycles in \(\specvoutgraph{v}\).
Figure~\ref{fig:colour} depicts the colouring process.

\begin{figure}[ht!]
\centering
    \begin{tikzpicture}
    \node[anchor=south west,inner sep=0] at (0,0) {\includegraphics{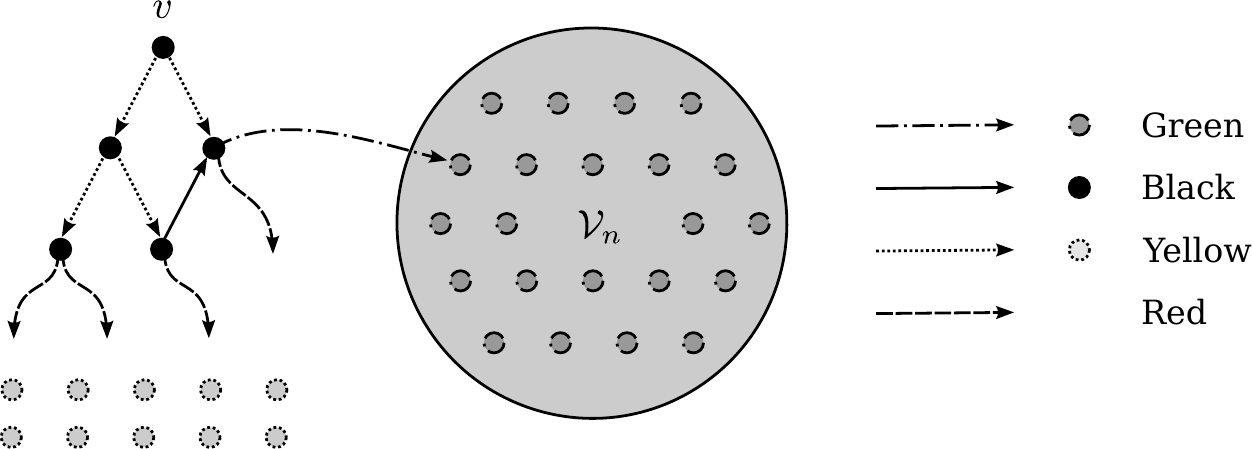}};
    \end{tikzpicture}
    \caption{The colouring process.}
    \label{fig:colour}
\end{figure}

We use random variables \(R_t\) and \(Y_t\) to track the number of red
arcs and yellow vertices after the \(t\)-th red arc is colored.  Thus
\(R_0 = k\) and \(Y_0 = \voutsize -1\). When a red arc
is colored, if a yellow vertex is chosen as its endpoint, then
the number of red arcs increases by \((k-1)\) and the number of yellow vertices
decreases by one. Otherwise the number of red arcs decreases by one and the
number of yellow vertices remains unchanged.
Thus for \(t \ge 1\),
\[
R_t = R_{t-1} + k \xi_{t} - 1 = k \sum_{i=1}^{t} \xi_i - (t-k), 
\quad \text{and} \quad
Y_t = Y_{t-1} - \xi_{t} = \voutsize -1 - \sum_{i=1}^{t} \xi_i,
\]
where \(\xi_{t}\) are independent Bernoulli \(Y_t/n\) (the probability that a
yellow vertex is chosen).  Let \(T \equiv \min\{t:R_t \le 0\}\).  Then
\(\specvoutsize{v} = T/k\), since \(T\) is the total number arcs that have been
colored and \(\specvoutsize{v}\) is the total number of vertices that have been
colored.

\newcommand{\xio}[1]{\overline{\xi_{#1}}}
\newcommand{\xiu}[1]{\underline{\xi_{#1}}}
\newcommand{\To}{\overline{T}}
\newcommand{\Ro}{\overline{R}}
\newcommand{\Tu}{\underline{T}}

Let \((\xio{t})_{t \ge 1}\), be i.i.d.\ Bernoulli \((\muk + \errp)\).  Since \(Y_t/n
\le \voutsize/n \le \muk + \errp\), we have \(\xio{t} \succeq \xi_{t}\), where
\(\succeq\) denotes stochastically greater than (see~\citep{Shaked2007}).
Therefore there exists a coupling such that \(\xio{t} \ge \xi_{t}\) for all
\(t\)
almost surely.  Let \(\To_{t} \equiv \min\{t:k \sum_{i=1}^{t} \xio{i} - (t-k)
\le 0\}\).
Then \(\To \ge T\) almost surely.  (The random variable \(T\) is called the total progeny of a
Galton-Watson process
with offspring distribution \(\xio{1}\). For an introduction to Galton-Watson processes
see~\citep{Durrett2010probability}).
It is well know that if \(\e \xio{1} < 1\),
which is true in this case, then \(\e \To
= k /(1-\e \xio{1}) = O(1)\). Thus \(\e T = O(1)\).

\begin{proof}[Proof of the upper bound]
    Let \(\omega_n = \floor{(1+\varepsilon) \log n/ \log(1/\lambda_k)}+1\). 
    Since \(\To \ge T\),
\begin{align*}
    \p{T \ge k \omega_n} & \le \p{\To \ge k \omega_n} 
    \le \p{\frac {\sum_{i=1}^{k {\omega_n}} \xio{i}} {k \omega_n} 
    \ge \frac 1 {k_n}}
\end{align*}
where \(k_n = k \omega_n/(\omega_n -1)\). \citet*{Hoeffding1963}
showed that 
\[
\p{\frac {\Bin(m,p)} m \ge p+x} \le \left\{  
    \left(\frac{p}{p+x} \right)^{p+x}
    \left(\frac{1-p}{1-p-x} \right)^{1-p-x}
\right\}^{m}.
\]
where \(\Bin(m,p)\) denotes a binomial \((m,p)\) random variable.
Recalling that 
\(\e \xio{1} = \muk + \errp \equiv 1-\tauk/k + \errp\) and
\(\lambdak \equiv (k-{\tauk})\left( \frac{{\tauk}}{k-1} \right)^{k-1}\),
it follows from Hoeffding's inequality that \(\p{T \ge k \omega_n}\) is at most
\begin{align*}
    \left[ \left( \frac{\e \xio{1}}{1/k_n} \right) \left( \frac{1-\e
    \xio{1}}{1-1/k_n} \right)^{k_n-1} \right]^{\omega_n}
    & = \left[ (k-\tauk)\left(\frac \tauk {k-1}\right)^{k-1} + O(\errp)
        \right]^{\omega_n+O(1)} \\
        & = O(\lambda_k^{\omega_n}) \left(1  + O\left(\errp
    \right)\right)^{\omega_n} \\
    & = \bigO{n^{-(1+\varepsilon)}}.
    \label{eq:upper:bound:spectrum}
    \numberthis
\end{align*}
Since \(k \specvoutsize{v} = T\),
by the union bound
\[
\p{\cup_{v \in \vout} \specvoutsize{v} \ge \omega_n} \le n \p{\To \ge
k \omega_n} = \bigO{n^{-\varepsilon}}. \tag*{\qedhere}
\]
\end{proof}

\begin{proof}[Proof of the lower bound]

    Let \(\psi_n \equiv \ceil{(1-\varepsilon)\log n/\log(1/\lambdak)}\). To show that whp
    there exists a \(v \in \vout\) such that \(\specvoutsize{v} \ge \psi_n\), pick an
arbitrary yellow vertex and run the colouring process. If at least
\(\psi_n\) vertices are colored black (success) in the process then terminate.
Otherwise (failure) pick another yellow vertex and repeat the colouring process
until one trial succeeds.  If the colouring process is repeated for at most \(t_n
\equiv \floor{n/(\log n)^3}\) times, then at most \(a_n \equiv t_n \psi_n =
O(n/(\log n)^{2})\) vertices are colored black in the end. Therefore, the
probability that the number of red arcs increases after colouring one red arc is
at least \((\voutsize - a_n)/n\).

Let \((\xiu{i})_{i \ge 1}\) be i.i.d.\ Bernoulli \((\voutsize - a_n-\psi_n)/n\).
Let \(\Tu = \min\{t:k \sum_{i=1}^{t} \xiu{i} - (t-k) \le 0\}\). Then in each of
the first \(t_n\) iterations, the probability of a success is at least \(\p{\Tu
\ge k \psi_n} \ge \p{\Tu = k \psi_n}\). (For a detailed proof, see
\citeauthor{Van2014randomV1}'s discussion of the Erdős–Rényi
model~\citep[chap.~4.2.2]{Van2014randomV1}.)
By the hitting-time theorem of Galton-Watson processes~\citep{Van2008elementary}, 
\[
\p{\Tu = k \psi_n} 
= \frac {1} {\psi_n} \p{k \sum_{i = 1}^{k \psi_n} \xiu{i} = k (\psi_n -1)}.
\]
Since \(\sum_{i=1}^{k \psi_n} \xiu{i}\) is a binomial random variable, the
above equals
\[
\frac 1 {\psi_n} 
\binom{k \psi_n}{\psi_n -1} 
\left( \frac{\voutsize - a_n - \psi_n}{n} \right)^{\psi_n -1} 
\left( 1-\frac{\voutsize - a_n - \psi_n}{n} \right)^{k {\psi_n} - \left(
{\psi_n}-1 \right)}
\equiv b_n.
\]
By Stirling's approximation \cite[pp. 407]{Flajolet2009}
\[
\binom{k \psi_n}{\psi_n -1} =
\Theta(1) \binom{k \psi_n}{\psi_n} =
\frac 1 {\Theta\left( \sqrt{\psi_n} \right)}
\left[ \frac k {(1-1/k)^{k-1}} \right]^{\psi_n}.
\]
Recalling that \(a_n \equiv \bigO{n/(\log n)^2}\) and \(\psi_n \equiv
\ceil{(1-\varepsilon)\log n/\log(1/\lambdak)}\), we have,
in view of \(\voutsize =  e^{-\tauk}n + \bigO{n^{1/2+\delta}}\),
\begin{align*}
    \left( \frac{\voutsize - a_n - \psi_n}{n} \right)^{{\psi_n}-1} 
    = \left(\muk - O\left(\frac 1 {(\log n)^2} \right)\right)^{{\psi_n} - 1}
    = \Theta\left( e^{-\tauk \psi_n} \right),
\end{align*}
and
\begin{align*}
    \left( 1 - \frac{\voutsize - a_n - \psi_n}{n} \right)^{k {\psi_n} - \left( {\psi_n}-1 \right)}
    & = \left( 1- \muk + O\left( \frac 1 {(\log n)^2} \right) \right)^{k {\psi_n} -
    \left( {\psi_n}-1 \right)} \\
    & = \Theta\left( \left(\frac{\tauk}{k}\right)^{(k-1)\psi_n} \right).
\end{align*}
Recall that \(\muk \equiv 1 - \tauk/k\). Therefore
\[
\lambdak 
\equiv (k-{\tauk})\left( \frac{{\tauk}}{k-1} \right)^{k-1}
= k \muk \left( \frac{{\tauk}}{k-1} \right)^{k-1}
= \frac{k}{\left( 1-1/k \right)^{k-1}}
    e^{-\tauk}
    \left(\frac{\tauk}{k}\right)^{k-1}.
    \]
Putting everything together, we have
\begin{align*}
    b_n = \Theta\left( \frac 1 {\psi_n}
    \frac{1}{\sqrt{\psi_n}}
    \left[ \frac{k}{\left( 1-1/k \right)^{k-1}}
    e^{-\tauk}
    \left(\frac{\tauk}{k}\right)^{k-1} \right]^{\psi_n} \right) 
    = \Theta\left( \frac {\lambda_k^{\psi_n}}{\psi_n^{3/2}} \right)
    = \Theta\left( \frac {n^{-1+\varepsilon}}{\psi_n^{3/2}} \right).
\end{align*}
So the probability that all the first \(t_n \equiv \floor{n/(\log n)^3}\) trials fail is at most
\[
(1-b_n)^{t_n} \le \exp\left\{-b_n t_n \right\} = \exp\left\{ 
\Theta\left(-\frac{n^{\varepsilon}}{(\log n)^{9/2}} \right)
\right\} = o(1). \tag*{\qedhere}
\]
\end{proof}

By Lemma~\ref{lem:giant:size}, whp \(\giant\) is reachable from all vertices.
When this happens, \(\onecore \setminus \giant\) consists of vertices either on cycles
in \(\giantoutgraph\) or on paths from these cycles to \(\giant\).  Since the number of such
cycles and the length of the longest one of them are both \(O_p(1)\),
Lemma~\ref{lem:spec:size} implies that \(\onecoresize - \giantsize = O_{p}(\log
n)\). Thus 
\[
\frac{\giantsize - \nuk n}{\sqrt{n}} 
= \frac{\onecoresize - \nuk n}{\sqrt{n}} - O_p\left( \frac {\log n}{\sqrt{n}} \right)
\inlaw \cZ,
\]
which is the second part of Theorem~\ref{thm:CLL}.

In fact we can show that \(\onecoresize - \giantsize = O_p(1)\).  This seems to
be obvious since in \(\voutgraph\) the expected size of a spectrum is \(O(1)\) and
the number of cycles is \(O_{p}(1)\). However, it is not trivial because \(\ind{v
\text{ is on a cycle}}\) and \(\specvoutsize{v}\) are not independent. For a
proof using Cayley's formula, see~Lemma~\ref{lem:middle:layer} in the next section (Section
\ref{sec:mid}).

We can also use Lemma~\ref{lem:spec:size} to show that
\[
    \frac{\max_{v \in [n]} |\spec{v}|-\giantsize}{\log n} 
    \inprob 
    \frac 1 {\log(1/\lambda_k)}
    ,
\] 
which finishes the last part of Theorem~\ref{thm:CLL}, i.e., \((\max_{v \in
[n]} |\spec{v}|-\nuk n)/\sigma_k \sqrt{n} \inlaw \cZ\). Let \(A_n\) be the
event that every vertex can reach \(\giant\). Assuming \(A_n\) happens,
\(\giant \subseteq \spec{v}\) for all \(v \in [n]\).  Thus for all \(\varepsilon > 0\),
\begin{align*}
    &
    \p{
        \left| 
        \frac{\max_{v \in [n]} |\spec{v}|-\giantsize}{\log n} 
        -
        \frac 1 {\log(1/\lambda_k)}
        \right|
        > \varepsilon
    }
    \\
    &
    \le
    \p{
        \left[ 
            \left| 
            \frac{\max_{v \in [n]} |\specout{v}|}{\log n} 
            -
            \frac 1 {\log(1/\lambda_k)}
            \right|
            > \varepsilon
        \right]
        \cap
        A_n
    }
    +
    \p{A_n^c}
    =
    o(1)
    .
\end{align*}
Since \(\speconesize \le \max_{v \in [n]} |\spec{v}|\) and whp \(\speconesize
\ge \giantsize\), we also recover \citeauthor{Grusho1973}'s central limit law of
\(\speconesize\).

\subsubsection{The size of the middle layer}

\label{sec:mid}

Lemma~\ref{lem:middle:layer} and Corollary~\ref{cor:giant:determnistic} imply
that \(\onecoresize - \giantsize = O_{p}(1)\). 
\begin{lemma}
    Let \(\omega_n \to \infty\) be an arbitrary sequence of nonnegative numbers. Then
    \[
    \sup_{\Vcond} 
    \p{\sum_{v \in \cC(\vout)} \specvoutsize{v} \ge \omega_n} 
    = o(1)
    ,
    \]
    where \(\cC(\vout)\) denotes the set of vertices on cycles in
    \(\voutgraph\), and \(\specvout{v}\) is the spectrum of \(v\) in
    \(\voutgraph\), the sub-digraph induced by \(\vout\).
    \label{lem:middle:layer}
\end{lemma}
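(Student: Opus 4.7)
The plan is to apply Markov's inequality, reducing the lemma to the uniform first-moment bound
\[
\sup_{\Vcond} \e{\sum_{v \in \cC(\vout)} \specvoutsize{v}} = O(1),
\]
since this yields $\p{\sum_{v \in \cC(\vout)} \specvoutsize{v} \ge \omega_n} \le O(1)/\omega_n = o(1)$ uniformly in $\vin$ for every $\omega_n \to \infty$.

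By Fubini, the expectation rewrites as $\sum_{v, u \in \vout} \p{v \in \cC(\vout),\, u \in \specvout{v}}$. For each pair $(v, u)$, the joint event is witnessed by a simple directed cycle $C$ of some length $\ell \ge 1$ through $v$ together with a simple directed path $P$ of length $m \ge 0$ from $v$ to $u$, both lying in $\voutgraph$. Since each specified labeled arc of $\randdfa$ points to a prescribed target with probability $1/n$, a union bound over labeled $(C,P)$ gives
\[
\p{v \in \cC(\vout),\, u \in \specvout{v}} \le \sum_{(C, P)} n^{-|E(C) \cup E(P)|}.
\]

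The key combinatorial input---of Cayley flavor---is that $H = C \cup P$ always satisfies $|E(H)| \ge |V(H)|$, since it is weakly connected and contains the cycle $C$; in effect $H$ is dominated by a ``tadpole'' (a cycle with a rooted out-tree attached), the shapes of which are enumerated via Cayley's formula for rooted forests. Combined with the counts $(|\vout|-1)_{\ell-1} k^\ell$ for labeled cycles of length $\ell$ through $v$ in $\vout$, $(|\vout|-2)_{m-1} k^m$ for labeled simple paths of length $m$ from $v$ to $u$, and $k \muk < 1$ (Lemma~\ref{lem:constant}) ensuring convergence of the geometric series, the dominant arc-disjoint contribution for $u \neq v$ is
\[
\sum_{\ell, m \ge 1} \frac{|\vout|^{\ell+m-2} k^{\ell+m}}{n^{\ell+m}} = O(n^{-2}),
\]
and for $u = v$, $\p{v \in \cC(\vout)} \le \sum_\ell |\vout|^{\ell-1} k^\ell n^{-\ell} = O(n^{-1})$. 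Summed over pairs, $|\vout|^2 \cdot O(n^{-2}) + |\vout| \cdot O(n^{-1}) = O(1)$, the required bound.

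The hardest part will be the arc-sharing case, where $P$ traces $j \ge 1$ initial arcs of $C$ before branching off (including the degenerate sub-case $u \in V(C)$ with $P \subseteq C$). There one must verify that $j$ shared arcs come with $j+1$ shared vertices so $|E(H)| \ge |V(H)|$ persists, and check that the nested geometric sums over $(\ell, m, j)$ still converge by $k\muk < 1$, so that the contribution remains $O(n^{-2})$ per pair. Once this case analysis is complete, the uniform first-moment bound follows and the lemma is immediate by Markov.
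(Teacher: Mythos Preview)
Your first-moment approach via Markov is correct and genuinely different from the paper's proof. The paper does not compute $\E{\sum_{v\in\cC(\vout)}|\specvout{v}|}$ at all; instead it first invokes three whp structural facts (Theorem~\ref{thm:cyc:v}, Lemma~\ref{lem:spec:backward}, Lemma~\ref{lem:spec:size}) to force any counterexample to contain an ``$\ell$-eye'' (a cycle with pendant trees) of size $\ell\in[\sqrt{\omega_n},(\log n)^2]$, and then bounds the expected number of such $\ell$-eyes by $\sum_\ell \ell\,\rho_k^\ell=o(1)$ using Cayley's formula to count rooted trees on $\ell$ vertices. Your route is more self-contained---it needs nothing beyond $k\muk<1$---whereas the paper's route leans on earlier lemmas but yields extra structural information (the $\ell$-eye picture). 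Two remarks on your sketch: (i) the appeal to Cayley is a red herring here---the only combinatorial fact you need is that $H=C\cup P$ is weakly connected and contains a cycle, hence $|E(H)|\ge|V(H)|$, which you already state; (ii) the ``arc-sharing case'' you flag as hardest is a complication created by your choice of witness. If instead you witness $[v\in\cC,\,u\in\specvout{v}]$ by a cycle $C\ni v$ of length $\ell$ together with a simple path $Q$ of length $m'\ge 0$ from some $w\in V(C)$ to $u$ that is \emph{vertex-disjoint} from $C$ except at $w$ (such a tadpole always exists: take a shortest path from $V(C)$ to $u$), then $|E(C\cup Q)|=\ell+m'$ exactly, the only compatibility constraint is that $Q$'s first label differs from $C$'s out-label at $w$, and the count is at most $\ell\,|\vout|^{\ell+m'-2}k^{\ell+m'}$, giving $\sum_{\ell\ge1,m'\ge0}\ell\,|\vout|^{\ell+m'-2}(k/n)^{\ell+m'}=O(n^{-2})$ directly with no case analysis.
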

\noindent 
\begin{proof}
    By Theorem~\ref{thm:cyc:v} and Lemma~\ref{lem:spec:backward}, in
    \(\voutgraph\) whp: (a) there are at most \(\sqrt{\omega_n}\) vertices on
    cycles, i.e., \(|\cC(\vout)| \le \sqrt{\omega_n}\); (b) every
    \(\specvout{v}\) induces either a tree or a tree plus one extra arc; (c)
    \(\max_{v \in \giantout}\specvoutsize{v} = O(\log n)\).  Now assume all
    these events happen.  If \(\sum_{v \in \cC(\vout)} \specvoutsize{v} \ge
    \omega_n\), then (a) implies there is at least one vertex \(u \in
    \cC(\vout)\) with \(\specvoutsize{u} \ge {\sqrt{\omega_n}}\).  By (b),
    \(\specvout{u}\) induces a sub-digraph that consists of exactly one cycle and
    isolated trees with their roots on this cycle.  
    If \(\specvoutsize{u} = \ell\), we call the induced sub-digraph an
    \(\ell\)-eye.
    Note that by
    (c) there are no \(\ell\)-eyes with \(\ell > {(\log n)^2}\).

    \begin{figure}[ht!]
    \centering
        \begin{tikzpicture}
        \node[anchor=south west,inner sep=0] at (0,0) {\includegraphics{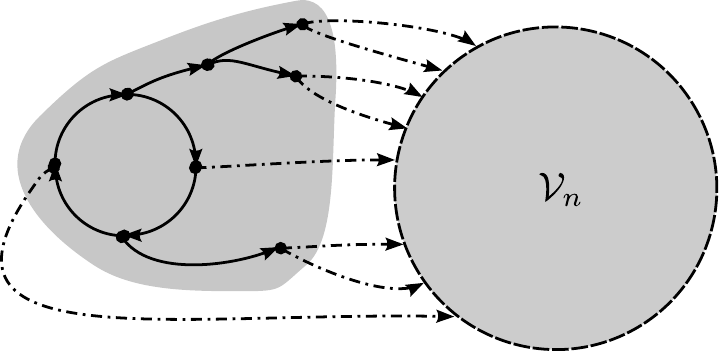}};
        \end{tikzpicture}
        \caption{The leftmost shaded part of this figure is an \(\ell\)-eye.}
        \label{fig:eye}
    \end{figure}

    Let \(\cS \subseteq \vout\) with \(|\cS|= \ell\) be a set of vertices.  If
    \(\cS\) induces an \(\ell\)-eye \(\cD_e\), then there are \(\ell\) arcs
    that start and end at specific vertices in \(\cS\) decided by \(\cD_e\),
    which happens with probability \((1/n)^{\ell}\).  If \(\cS=\specvout{u}\)
    for some vertex \(u \in \cS\), call \(\cS\) a \emph{partial spectrum}.  For
    \(\cS\) to be a partial spectrum, the other \((k-1)\ell\) arcs that start
    from \(\cS\) must end at \(\vin\), which happens with probability \(
    (\vinsize/n)^{(k-1)\ell}\).  So the probability that \(\cS\) induces a
    fixed \(\cD_e\) and \(\cS\) is a partial spectrum is \((1/n)^{\ell}
    (\vinsize/n)^{(k-1)\ell}\).  
    
    By Cayley's formula~\citep{Biggs1976}, there are \(\ell^{\ell-1}\) ways
    that \(\cS\) can form a rooted tree.  In such a tree, there are at most
    \(\ell^2\) ways to add an extra arc to make it an \(\ell\)-eye. In a
    vertex-labeled \(\ell\)-eye, there are at most \(k^{\ell}\) ways to label
    the arcs.  So the number of \(\ell\)-eyes can be induced by \(\cS\) is less
    than \(\ell^{\ell-1} \ell^{2} k^{\ell}\). And there are
    \(\binom{\voutsize}{\ell}\) ways to choose \(\cS\).

    Let \(X_{\ell}\) be the number of \(\ell\)-eyes induced by partial spectra.  
    Recall that \(\nuk \equiv \tauk/k =1- \muk\). Thus
    \(\vinsize \in \Iin \equiv \inrange\) implies that
    \(\voutsize \le \muk n +
    \err \).  So for \(\ell \le {(\log n)^{2}}\), by the
    above arguments,
    \begin{align*}
        \e X_\ell 
        & \le \binom{\voutsize}{\ell} \ell^{\ell-1} \ell^{2} k^{\ell} 
        \left( \frac 1 n \right)^{\ell} \left( \frac{\vinsize}{n}
        \right)^{(k-1)\ell}\\
        & \le \frac{(\muk n + \err)^{\ell}}{(\ell/e)^{\ell}} \ell^{\ell+1} 
        k^{\ell} \left( \frac 1 n \right)^{\ell} 
        \left( \frac{\tauk}{k} + \errp \right)^{(k-1)\ell}
        \\
        & = \left[ e\left(e^{-\tauk}+\errp\right) k \left( \frac{\tauk}{k} +
        \errp \right)^{k-1} \right]^{\ell} \ell 
        \\
        & 
        =
        \left(1 +  \bigO{\ell \errp}\right)
        {\left(k e^{1-\tauk} \left(\frac{\tauk}k\right)^{k-1}\right)^{\ell}}
        \ell
        \\
        & 
        \equiv
        \left(1 +  \bigO{\ell \errp}\right)
        \rho_k^\ell
        \ell 
        .
    \end{align*}
    By Lemma~\ref{lem:constant}, \(\rho_{k} < 1\). Since \({\sqrt{\omega_n}} \to \infty\),
    \begin{align*}
        \sum_{\sqrt{{\omega_n}} \le \ell \le (\log n)^2} \e X_\ell 
        & 
        \le 
        \left[1+\bigO{\frac{(\log n)^2}{\errpd}}\right] \sum_{\sqrt{\omega_n} \le \ell}^{\infty} \ell (\rho_k)^{\ell} 
        = 
        o(1).
    \end{align*}
    Thus whp there are no \(\ell\)-eyes induced by partial spectra with \(\ell \in [\sqrt{\omega_n},(\log
    n)^2]\).
\end{proof}

\subsubsection{The distance to the giant}

This subsection proves part (c) of Theorem~\ref{thm:dag}.
\begin{lemma}
    For all \(\varepsilon > 0\), 
    \[
    \sup_{\Vcond} \p{\left| \frac{\max_{v \in \vout} \treevw}{\log_k \log n} - 1 \right| > \varepsilon}
    = o(1),
    \]
    where \(\treevw \equiv \min_{u \in \vin}\dist(v,u)\), i.e., \(\treevw\) is the length of the
    shortest path from \(v\) to \(\vin\).
    \label{lem:spec:width}
\end{lemma}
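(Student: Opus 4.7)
The plan is a two-sided argument controlling $\treevw$ via a BFS exploration from $v$ in $\randdfa$. For a fixed $v\in\vout$, let $B_i(v) = \{u:\dist(v,u)\le i\}$ be the radius-$i$ ball in $\randdfa$; the event $\treevw > d$ is equivalent to $B_d(v)\subseteq \vout$. Exposing the arcs of the BFS tree one level at a time and exploiting that each fresh arc's head is iid uniform on $[n]$, one obtains the identity
\[
\p{\treevw > d} \;=\; \e\!\left[\mathbb{1}\{B_{d-1}(v)\subseteq \vout\} \cdot (\voutsize/n)^{k\,|B_{d-1}(v)|}\right],
\]
since all $k|B_{d-1}(v)|$ exposed arcs must independently land in $\vout$. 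For $k^d = o(\sqrt n)$ the labelled $k$-ary tree rooted at $v$ is collision-free with probability $1 - O(k^{2d}/n)$; in that ``tree regime'' $|B_{d-1}(v)| = (k^d-1)/(k-1) \asymp k^{d-1}$, and the identity gives $\p{\treevw > d}\asymp \muk^{k^d} = \exp(-\tauk k^d)$.

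\textbf{Upper bound.} Set $d_n^{+}=\lceil (1+\varepsilon)\log_k \log n\rceil$, so that $k^{d_n^{+}}\ge(\log n)^{1+\varepsilon}$. The tree-regime contribution to $\p{\treevw > d_n^{+}}$ is at most $\muk^{k^{d_n^{+}}/2} = \exp\{-\tfrac{\tauk}{2}(\log n)^{1+\varepsilon}\}=o(1/n)$. For the complementary ``collapsed BFS'' regime, where $|B_{d_n^{+}-1}(v)|$ is much smaller than $k^{d_n^{+}-1}$ because heavy collisions confine the exploration to a small subset of $\vout$, such $v$'s must lie inside a near-closed subset of $\randdfa$ of size $O(\log\log n)$; a direct count in the spirit of Lemma~\ref{lem:k:surj} applied to small $s$ shows this contribution is also $o(1/n)$. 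A union bound over $v\in\vout$ then yields $\p{\max_v \treevw > d_n^{+}} = o(1)$.

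\textbf{Lower bound.} Set $d_n^{-}=\lfloor (1-\varepsilon)\log_k \log n\rfloor$ and $X = \#\{v\in\vout:\treevw > d_n^{-}\}$. Since $k^{d_n^{-}}\le (\log n)^{1-\varepsilon}$, the tree-regime estimate applied to a single $v$ gives $\p{\treevw > d_n^{-}}\ge n^{-1+\Omega(\varepsilon)}$, so $\e X \to\infty$ polynomially. For the second moment, note that $\{\treevw > d_n^{-}\}$ is determined by the arcs out of $B_{d_n^{-}-1}(v)$, a set of size $O((\log n)^{1-\varepsilon})$; for all but $O(n(\log n)^{2(1-\varepsilon)})$ of the $n^2$ pairs $(u,v)$ the balls $B_{d_n^{-}-1}(u)$ and $B_{d_n^{-}-1}(v)$ are disjoint and the two events are therefore independent, contributing at most $(1+o(1))(\e X)^2$ to $\e X^2$; the intersecting pairs contribute at most $O(n(\log n)^{O(1)})\cdot \p{\treevw > d_n^{-}} = o((\e X)^2)$ because $\e X \gg (\log n)^{O(1)}$. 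Paley--Zygmund then yields $\p{X\ge 1}\to 1$, hence $\max_v \treevw \ge d_n^{-}$ whp. Combining the two bounds, Lemma~\ref{lem:giant:deterministic} transfers the statement from a fixed $\vin$ with $\vinsize \in \Iin$ to $\giantout$, completing the proof.

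\textbf{Main obstacle.} The principal technical difficulty is the collapsed-BFS regime in the upper bound: for a single $v$ the probability of a BFS collision is only $O(k^{2d}/n)$, but a naive union bound loses a polylogarithmic factor and fails to give $o(1)$. Circumventing this requires observing that a $v$ whose BFS collapses while staying in $\vout$ must sit inside a small near-closed set of $\randdfa$, and then bounding the expected number of such sets directly --- this is analogous to the argument behind Lemma~\ref{lem:middle:layer} rather than a purely local BFS-tail calculation.
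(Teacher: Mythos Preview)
Your two-sided plan is reasonable, but both halves diverge from the paper's argument, and the upper bound has a genuine gap precisely where you flag it.

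\textbf{Upper bound.} You correctly observe that the per-vertex collision probability $O((\log n)^{O(1)}/n)$ survives a naive union bound over $v$. Your proposed fix---arguing that a collapsed BFS traps $v$ in a small ``near-closed'' set and then counting such sets ``in the spirit of Lemma~\ref{lem:k:surj}''---is not made precise: you neither define near-closed nor indicate why the count is $o(1)$ after summing over $v$. The paper sidesteps this regime entirely. By Lemma~\ref{lem:spec:backward} there is a \emph{global} event $A_n$ with $\p{A_n^c}=o(1)$ on which every spectrum in $\voutgraph$ has at most one excess arc; on $A_n$, every BFS tree in $\voutgraph$ is within one branch of a full $k$-ary tree, so $[\treevw>\omega_n]$ deterministically forces $\Theta(k^{\omega_n})=\Theta((\log n)^{1+\varepsilon})$ arcs to land in $\vout$. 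The union bound then reads $\sum_v \p{[\treevw>\omega_n]\cap A_n}+\p{A_n^c}\le n(\muk+o(1))^{\Theta((\log n)^{1+\varepsilon})}+o(1)=o(1)$. The collapsed-BFS regime is never analysed per vertex; it is ruled out once, globally, by a lemma already in hand.

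\textbf{Lower bound.} Your second-moment sketch has a circularity: whether $B_{d_n^--1}(u)$ and $B_{d_n^--1}(v)$ are disjoint is itself random and depends on the same arcs that determine the two events, so you cannot split the $n^2$ pairs by disjointness as if it were a deterministic partition. This can be repaired, but not as written. The paper avoids variance computations altogether via a sequential-trials argument: repeatedly pick an untouched vertex and attempt to grow $\psi_n-1$ full layers entirely inside $\vout$; each trial consumes at most $O((\log n)^{1-\varepsilon})$ vertices, so after $\lfloor n/(\log n)^2\rfloor$ trials only $o(n)$ vertices are spent, the per-trial success probability remains at least $(\muk-o(1))^{O((\log n)^{1-\varepsilon})}=e^{-O((\log n)^{1-\varepsilon})}$, and the probability that all trials fail is $o(1)$.

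Two smaller slips: in your displayed identity the exponent should be $k|\partial B_{d-1}(v)|$, not $k|B_{d-1}(v)|$, since only the arcs out of the boundary layer are still unrevealed; and the closing appeal to Lemma~\ref{lem:giant:deterministic} is misplaced, since the lemma you are proving is already stated for fixed $\vin$---the transfer to $\giantout$ is a separate step in the derivation of Theorem~\ref{thm:dag}(c).
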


Let \(v \in \vout\) be a vertex.
If \(\treevw > 1\), then all neighbors of \(v\) are in \(\vout\), and most
likely there are \(k\) of them.  So \(\p{\treevw > 1} \approx (\voutsize/n)^k
\approx e^{-\tauk k}\). If \(\treevw > 2\), then the neighbors of \(v\)'s
neighbors are all in \(\vout\), and most likely there are \(k^2\) of them. So
\(\p{\treevw > 2} \approx (\voutsize/n)^{k+k^2} \approx e^{-\tauk (k+k^2)}\).
Repeating this argument shows that \(\p{\treevw > x} \approx \exp\{-\tauk(k +
k^2 \dots k^x)\} = e^{-\tauk \Theta(k^x)}\), which is \(o(1/n)\) when \(x \ge
(1+\varepsilon) \log_k \log n\).

To make the above intuition rigorous, the colouring process defined in the
previous subsection needs to be slightly modified. Let \(v\) be the vertex
where the process has started. When choosing a red arc to colour, instead of
choosing one arbitrarily from all red arcs, choose one arbitrarily from those
that are closest to \(v\). Thus at the end, the yellow arcs consist of not just
a spanning tree but a breadth-first-search (bfs) spanning tree of
\(\specvoutgraph{v}\). If \(\vin\) (the set of green vertices) is contracted
into a single green vertex, then the green arcs together with yellow arcs form
a \dagr{}.  Let \(\treev\) denote this \dagr{}.  Then \(\treevw\) is the length
of the shortest path from \(v\) to the green vertex contracted from \(\vin\).
Figure~\ref{fig:treev} shows an example of \(\treev\).

\begin{figure}[ht!]
\centering
    \begin{tikzpicture}
    \node[anchor=south west,inner sep=0] at (0,0) {\includegraphics{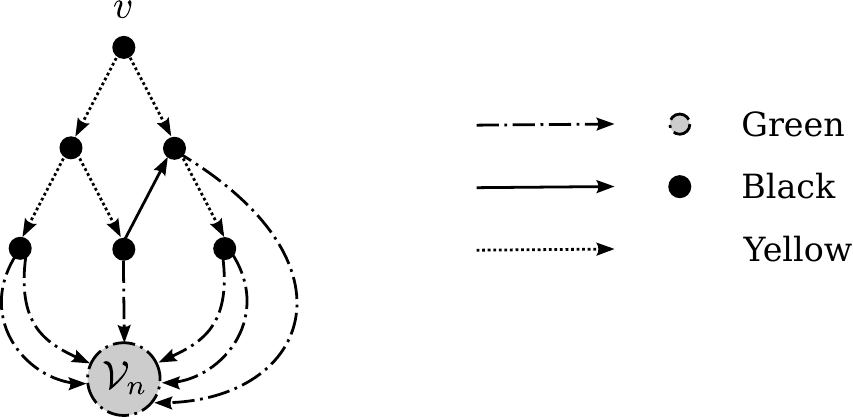}};
    \end{tikzpicture}
    \caption{An example of \(\treev\).}
    \label{fig:treev}
\end{figure}

\begin{proof}
    Let \(\omega_n = \floor{(1+\varepsilon) \log_k \log n}\).  Call the arcs
    whose endpoints are at distance \(i\) to \(v\) the \(i\)-th layer of
    \(\treev\).  The event \(\treevw > \omega_n\) implies that the first
    \(\omega_n\) layers of arcs in \(\treev\) are all yellow arcs and thus they
    form a tree of height \(\omega_n\).  By Lemma~\ref{lem:spec:backward}, whp
    there are no \(v \in \vout\) such that \(\specvoutgraph{v}\)
    contains more than one black arc.
    Thus whp in every \(\treev\) all internal (non-leaf) vertices except at
    most one have out degree \(k\).  Let \(A_n\) denote this event.
    Assuming \(A_n\) happens, \(\treevw > \omega_n\) implies that there are at
    least \(\Theta(k^{\omega_n}) = \Theta(\log n)^{1+\varepsilon}\) yellow arcs
    in the first \(\omega_n\) layers of \(\treev\). Thus in the colouring
    process, the first \(\Theta(\log n)^{1+\varepsilon}\) arcs choose their endpoints
    in \(\vout\).  The probability that this happens is at most \((\voutsize/n)^{\Theta(\log
        n)^{1+\varepsilon}}\).  Since \(\vinsize \in \Iin\), \(\voutsize = n -
        \vinsize \le \muk n + \err\).  Then by the union bound,
    \begin{align*}
        \p{\cup_{v \in \vout} [\treevw > \omega_n]} 
        & 
        \le 
        \sum_{v \in \vout} \p{[\treevw > \omega_n] \cap A_n} + \p{A_n^c} 
        \\
        &
        \le 
        n (\voutsize/n)^{\Theta(\log n)^{1+\varepsilon}} 
        + o(1)
        \\ 
        & 
        \le 
        n (\muk + \errp)^{\Theta(\log n)^{1+\varepsilon}} + o(1) 
        = o(1)
        .
    \end{align*}
    Thus whp \(\max_{v \in \vout} \treevw \le \omega_n\).

    Let \(\psi_n = \ceil{(1-\varepsilon) \log_k \log n}\).  To show that whp there is
    a vertex \(v\) with \(\treevw \ge \psi_n\), run the colouring process
    starting from an
    arbitrary yellow vertex \(v\) until either an arc is colored black or green
    (failure), or the first \(\psi_n-1\) layers of \(\treev\) are colored yellow
    (success).  So to succeed, the first \(\psi_n-1\) layers of $\treev$ form a
    full $k$-ary tree, i.e., the first \(k+k^2+\dots+k^{\psi_n-1} = \Theta(k^{\psi_n}) =
    \Theta(\log n)^{1-\varepsilon}\) arcs must be colored yellow.  If the process
    fails, we pick another yellow vertex and try again until one trial succeeds.
    Since the colouring process stops before colouring the \({\psi_n}\) layer of
    \(\treev\), each trial colors at most \(\Theta(k^{\psi_n}) = \Theta(\log
    n)^{1-\varepsilon}\) vertices black. If the process is tried at most
    \(\ceil{n/(\log n)^2}\) times, then at most \(b_n
    \equiv \ceil{n/(\log n)^2} O(\log n)^{1-\varepsilon} = O(n/(\log
    n)^{1+\varepsilon})\) vertices are colored black. Therefore, each arc has probability at least \((\voutsize -
    b_n)/n\) to be colored yellow during the first \(\ceil{n/(\log n)^2}\)
    trials.  Since $\vinsize \in \Iin$, 
    \(\voutsize = n - \vinsize \ge \muk n - \err\). Thus the probability to succeed in
    one trial is at least
    \[
        \left( \frac{\voutsize - b_n}{n} \right)^{O(\log n)^{1-\varepsilon}}
        \ge \left[ \muk - \bigO{\frac 1 {(\log n)^{1+\varepsilon}}}
            \right]^{O(\log n)^{1-\varepsilon}}
            = e^{-\bigO{\log n}^{1-\varepsilon}}
            .
    \]
        Therefore, the probability that the first \(\ceil{n/(\log n)^2}\) trials fail is
    at most
    \[ 
    \left( 1- e^{-O(\log n)^{1-\varepsilon}} \right)^{\ceil{n/(\log n)^2}} 
    \le 
    \exp\left\{- e^{-O(\log n)^{1-\varepsilon}} \frac n {(\log n)^2}\right\} 
    = o(1).
    \]
    Thus whp \(\max_{v \in \vout} \treevw \ge \psi_n\).
\end{proof}

\subsubsection{The longest path outside the giant}

This subsection proves (d) and (e) of Theorem~\ref{thm:dag}.
\begin{lemma}
    For all \(\varepsilon > 0\), we have:
    \[
        \sup_{\Vcond} 
        \p{
            \left|\frac{m(\vout)}{\log n} - \frac{1}{\log(e^{\tauk}/k) }\right| 
            > 
            \varepsilon} 
        = o(1),
    \]
    where \(m(\vout)\) denotes the length of the longest path in \(\voutgraph\);
    and
    \[
        \sup_{\Vcond} 
        \p{
            \left|\frac{d(\vout)}{\log n} - \frac{1}{\log(e^{\tauk}/k) }\right| 
            > 
            \varepsilon} 
        = o(1).
    \]
where \(d(\vout)\) denotes the maximal distance between two connected vertices
in \(\voutgraph\).
    \label{lem:path:len}
\end{lemma}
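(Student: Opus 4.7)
The plan is to mirror the two-sided strategy used in Lemma~\ref{lem:spec:size} and Lemma~\ref{lem:spec:width}: bound $m(\vout)$ from above by a first-moment path count, bound $d(\vout)$ from below by a BFS colouring-process trial scheme, and use the deterministic inequality $d(\vout) \le m(\vout)$ to squeeze both quantities into the same window.

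\textbf{Upper bound on $m(\vout)$.} A simple directed path of length $\ell$ in $\voutgraph$ is specified by an ordered tuple of $\ell+1$ distinct vertices of $\vout$ together with a labelling of its $\ell$ arcs, and each prescribed arc exists with probability $1/n$. Hence the expected number of length-$\ell$ paths equals $(\voutsize)_{\ell+1} k^{\ell} n^{-\ell}$. Because $\voutsize \le \muk n + \err$ (since $\vinsize \in \Iin$) and $k\muk = k - \tauk < 1$ by Lemma~\ref{lem:constant}, this expectation is at most $n(k\muk)^{\ell}(1 + \bigO{n^{-1/2+\delta}})^{\ell+1}$. Using $\muk \equiv e^{-\tauk}$ one has $e^{\tauk}/k = 1/(k\muk)$, so taking $\omega_n \equiv \floor{(1+\varepsilon)\log n / \log(e^{\tauk}/k)}$, Markov's inequality gives
\[
\p{m(\vout) \ge \omega_n} \le n (k\muk)^{\omega_n}(1+o(1)) = \bigO{n^{-\varepsilon}} = o(1),
\]
uniformly over admissible $\vin$; the same bound transfers to $d(\vout)$ via $d(\vout) \le m(\vout)$.

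\textbf{Lower bound on $d(\vout)$.} I reuse the BFS colouring-process trial scheme from Lemma~\ref{lem:spec:width}, only with a relaxed success criterion: a trial succeeds the moment some vertex of $\treev$ lies at depth at least $\psi_n \equiv \ceil{(1-\varepsilon)\log n / \log(e^{\tauk}/k)}$. Since the exploration is breadth-first, any such depth-$\psi_n$ vertex $u$ certifies $\dist(v,u) \ge \psi_n$, so a single success already gives $d(\vout) \ge \psi_n$. Aborting each trial as soon as depth $\psi_n$ is reached or the exploration dies, each trial colours at most $\sum_{i \le \psi_n} k^i = \bigO{k^{\psi_n}} = \bigO{n^{1-\varepsilon}}$ vertices, so after $t_n \equiv \floor{n/(\log n)^{2}}$ trials only $o(n)$ vertices have been consumed and the per-arc probability of hitting a yellow vertex stays at $\muk + \bigO{n^{-1/2+\delta}}$. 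Each trial's exploration then stochastically dominates a Galton--Watson process with offspring distribution $\Bin(k, \muk - \bigO{n^{-1/2+\delta}})$, whose generation-$\psi_n$ size $Z_{\psi_n}$ satisfies $\e{Z_{\psi_n}} \sim (k\muk)^{\psi_n}$ and, by the standard subcritical variance recursion, $\e{Z_{\psi_n}^{2}} = \bigO{(k\muk)^{\psi_n}}$. Paley--Zygmund therefore gives a per-trial success probability of at least $\bigOm{(k\muk)^{\psi_n}} = \bigOm{n^{-(1-\varepsilon)}}$, and the probability that all $t_n$ trials fail is
\[
\left(1 - \bigOm{n^{-(1-\varepsilon)}}\right)^{t_n} = \exp\!\left(-\bigOm{n^{\varepsilon}/(\log n)^{2}}\right) = o(1),
\]
uniformly in $\vin$. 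The bound transfers to $m(\vout)$ via $m(\vout) \ge d(\vout)$.

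\textbf{Main obstacle.} The only delicate step is securing the lower bound $\bigOm{(k\muk)^{\psi_n}}$ on the subcritical Galton--Watson survival probability to generation $\psi_n$: a naive single-spine argument would yield only $\muk^{\psi_n}$, which when $k \ge 2$ is off by the critical factor $k^{\psi_n}$ and would spoil the exponent. Paley--Zygmund applied to $Z_{\psi_n}$, using the subcritical second-moment bound $\e{Z_h^2} = \bigO{m^h}$ with $m = k\muk$, restores the correct rate. Everything else --- the bookkeeping of black vertices across trials, the passage from the finite-$n$ coupling to the idealised Galton--Watson process, and uniformity over $\vin$ with $\vinsize \in \Iin$ --- is routine and completely analogous to the proofs of Lemmas~\ref{lem:spec:size}~and~\ref{lem:spec:width}. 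Combining the two sides yields the sandwich $\psi_n \le d(\vout) \le m(\vout) \le \omega_n$ whp, which is exactly the claim.
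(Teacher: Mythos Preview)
Your upper bound is correct and identical to the paper's. Your overall lower-bound strategy (repeated BFS trials plus a subcritical Galton--Watson survival estimate) is also the paper's, and your use of Paley--Zygmund in place of the paper's generating-function recursion (Lemma~\ref{lem:gen:f}) is a perfectly legitimate variant that gives the same rate $\Omega((k\muk)^{\psi_n})$.

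There is, however, a genuine gap in the black-vertex bookkeeping. You write that each trial colours at most $\sum_{i\le\psi_n}k^i=\bigO{k^{\psi_n}}=\bigO{n^{1-\varepsilon}}$ vertices and that therefore after $t_n=\floor{n/(\log n)^2}$ trials only $o(n)$ vertices have been consumed. But $n^{1-\varepsilon}\cdot n/(\log n)^2 = n^{2-\varepsilon}/(\log n)^2$, which is $\gg n$, not $o(n)$. With only this worst-case per-trial bound, you cannot conclude that the yellow-hit probability stays at $\muk+o(1)$ throughout the trials, and the coupling to the fixed-parameter Galton--Watson process is unjustified. The paper handles exactly this issue by adding a third abort condition: a trial is also declared a failure as soon as it has coloured $\ceil{(\log n)^2}$ vertices. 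With that cap, $t_n=\floor{n/(\log n)^4}$ trials consume only $\bigO{n/(\log n)^2}$ vertices, restoring $\mu_n=\muk-\bigO{1/(\log n)^2}$. The price is that one must then show the cap is rarely binding; the paper does this by invoking the tail bound \eqref{eq:upper:bound:spectrum} from Lemma~\ref{lem:spec:size}, which gives probability $\bigO{n^{-1-\varepsilon}}$ that a single trial ever reaches $(\log n)^2$ vertices, a term that is harmlessly subtracted from the $\Omega(n^{-1+\varepsilon})$ success probability. This extra cap-plus-tail-bound step is not ``routine'' bookkeeping analogous to Lemmas~\ref{lem:spec:size} and~\ref{lem:spec:width}; it is the one place where the proof of the present lemma genuinely leans on the earlier spectrum-size result, and your sketch omits it.
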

\noindent Since \(m(\vout) \ge d(\vout)\), it suffices to prove the upper bound
for \(m(\vout)\) and the lower bound for \(d(\vout)\).
\begin{proof}[Proof of the upper bound]
    Let \(\omega_n=(1+\varepsilon) \log n / \log({e^{\tauk}}/{k})\).  Let
    \(X_\ell\) be the number of labeled paths of length \(\ell\) in \(\voutgraph\).
    There are less than $\voutsize^{\ell+1}k^{\ell}$ possible such paths. Each
    of them exists with probability $(1/n)^{\ell}$. Recall that \(\vinsize \in
    \Iin\) implies \(\voutsize \le \muk n + \err\).  Thus
    \[
    \e X_\ell 
    \le \voutsize^{\ell + 1} k^{\ell} \left( \frac 1 n
    \right)^{\ell}
    \le \left(\muk n + \err \right) \left(k \muk + {k}\errp \right)^{\ell}.
    \]
    Since \(k \muk < 1\) (Lemma~\ref{lem:constant}), for \(n\) large enough,
    \begin{align*}
        \sum_{\omega_n < \ell < \voutsize} \e X_{\ell}
        \le n \sum_{\omega_n < \ell} (k \muk + k
        \errp)^{\ell}
        = \bigO{n \left( k \muk \right)^{\omega_n}}
        = \bigO{n^{-\varepsilon}}.
    \end{align*}
    Thus \(\p{m(\vout) > \omega_n} = \bigO{n^{-\varepsilon}}\).
\end{proof}
\begin{proof}[Proof of the lower bound]
    Let \( \psi_n \equiv \ceil*{(1-\varepsilon) {\log n}/{\log(1/k\muk)}}\).
    To show there are two vertices at distance within \([\psi_n, \infty)\),
    pick an arbitrary yellow vertex \(v\) and run the colouring process
    until either a vertex at distance \(\psi_n\) from $v$ has been colored
    (success), or \(\ceil{(\log n)^2}\) vertices have been colored
    (failure), or the process terminates because all vertices 
    that are reachable from \(v\) in \(\voutgraph\) has been discovered (failure).  If the
    process fails, we pick another yellow vertex and try again until one
    trial succeeds.

    If at most \(t_n \equiv \floor{n/(\log n)^4}\) trials are made, then at
    most \(\ceil{(\log n)^2} t_n = \bigO{n/(\log n)^2}\) vertices are colored.
    So in the first \(t_n\) trials, when an arc is colored, the probability
    that it is colored yellow is at least \(\mu_n \equiv
    (\voutsize-\bigO{n/(\log n)^2})/n = \muk - \bigO{1/(\log n)^2}\).  Let
    \((Z_m)_{m\ge 0}\) be a Galton-Watson process with offspring distribution
    \(\Bin(k, \mu_n)\) and \(Z_0 = 1\). In other words, \(Z_{m+1} = \sum_{j
    =1}^{Z_m} X_{m,j}\), where \((X_{m,j})_{m \ge 0, j \ge 1}\) are i.i.d.\ 
    \(\Bin(k, \mu_n)\).  Then the probability that one trial succeeds is at
    least \(\p{Z_{\psi_n} > 0}\) minus the probability that in a trial
    \(\ceil{(\log n)^2}\) vertices have been colored, which is
    \(\bigO{n^{-1-\varepsilon}}\) by \eqref{eq:upper:bound:spectrum} in
    Lemma~\ref{lem:spec:size}.

        Let \(\varphi_m(y) = \e y^{Z_m}\), i.e., \(\varphi_m(y)\) is the probability
        generating function of \(Z_m\). Thus \(\p{Z_m = 0} = \varphi_m(0)\). 
    Since \(k
    \muk < 1/2\) (Lemma~\ref{lem:constant}), for \(n\) large enough \(k \mu_n <
    1/2\). So we can apply Lemma~\ref{lem:gen:f} in the appendix to show that
    \[
    \varphi_m(0) \le 1 - (k \mu_n)^{m} +  \left( 1- \frac{1}{2^{m}} \right) (k
    \mu_n)^{m+1} < 1- \frac 1 2 (k \mu_n)^{m}, \quad \text{for all $m \ge 0$}.
    \]
    Recalling that \(\psi_n \equiv \ceil*{(1-\varepsilon) {\log
    n}/{\log(1/k\muk)}}\),
    \[ 
    \p{Z_{\psi_n} > 0} 
    = 1-\varphi_{\psi_n}(0) 
    > \frac 1 2 \left( k \muk - \bigO{\frac{1}{(\log n)^2}} \right)^{\psi_n}
    = \bigOm{n^{-1+\varepsilon}}.
    \]
    So the probability that one trial succeeds is 
    \(\bigOm{n^{-1+\varepsilon}} - \bigO{n^{-1-\varepsilon}} =
    \bigOm{n^{-1+\varepsilon}}\). (The \(\bigO{n^{-1-\varepsilon}}\) term
    is the probability that one trial colors too many vertices.)
    Thus the probability that the first \(t_n \equiv \floor{n/(\log n)^4}\) trials fail is at most
    \[
    \left(1-\bigOm{n^{-1+\varepsilon}}\right)^{t_n} 
    \le \exp\left\{-\Omega\left( \frac {1} {n^{1-\varepsilon}}
    \floor*{\frac{n}{(\log n)^4}} 
    \right)\right\} 
    = \exp\left\{-\Omega\left( \frac {n^{\varepsilon}} {(\log n)^4} \right)\right\} = o(1).
    \]
    Therefore whp \(d(\vout) \ge \psi_n\).
\end{proof}

\section{Phase transition in strong connectivity}

Now instead of assuming that \(k\) is fixed, let \(k \to \infty\) as \(n \to \infty\). Let \(K\) be
a fixed integer.
We can construct 
\(\randdfa\) by first generating \(\randdfaK\) and then adding arcs with labels in
\(\{K+1,\ldots,k\}\) into it.  By Lemma \ref{lem:giant:size}, for all \(\varepsilon > 0\), there
exists a \(K\) depending only on \(\varepsilon\) such that whp in \(\randdfaK\) the largest closed \scc{} has
size at least \( (1-\varepsilon) n\) and is reachable from all vertices.  Since adding arcs can only
increase the size of this \scc{}, whp \(\randdfa\) has a \scc{} of size at least \(
(1-\varepsilon)n \) that is reachable from all vertices.

In fact, if \(k\) increases fast enough, then whp \(\randdfa\) is strongly connected.
More precisely, \(\randdfa\) exhibits a phase transition
for strong connectivity similar to the analogous event in the Erdős–Rényi model \cite{Erdos1959random}.

\label{sec:phase}

\begin{thm}
    \label{thm:phase}
    If \(k-\log n \to -\infty\), then whp \(\randdfa\) is not strongly connected. If \(k - \log n \to
    \infty\), then whp \(\randdfa\) is strongly connected.
\end{thm}

If there is a vertex with in-degree zero, then obviously the digraph is not strongly connected. Thus
the following lemma proves the lower bound in Theorem \ref{thm:phase}.

\begin{lemma}
    \label{lem:phase:lower}
    If \(k-\log n \to -\infty\), whp \(\randdfa\) contains a vertex of in-degree zero.
\end{lemma}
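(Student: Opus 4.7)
The plan is to use the second moment method on the number of in-degree-zero vertices. Let $Z$ denote the number of vertices $v \in [n]$ with in-degree zero in $\randdfa$. Since $\randdfa$ can be realized by choosing the endpoints of its $kn$ arcs independently and uniformly from $[n]$, vertex $v$ has in-degree zero iff none of these $kn$ arcs lands on $v$, so
\[
\p{v \text{ has in-degree } 0} = \left(1 - \frac{1}{n}\right)^{kn}.
\]
By linearity, $\E{Z} = n(1-1/n)^{kn} = (1+o(1))\, n\, e^{-k} = (1+o(1)) e^{\log n - k}$, and under the hypothesis $k - \log n \to -\infty$ this tends to $\infty$.

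Next I would compute $\E{Z^2}$ by writing $Z = \sum_{v} \ind{v \text{ has in-deg } 0}$ and splitting into diagonal and off-diagonal contributions. For distinct $u \ne v$, both have in-degree zero iff none of the $kn$ arcs lands in $\{u,v\}$, which has probability $(1-2/n)^{kn}$ (the arcs are still chosen independently). Hence
\[
\E{Z^2} = n\left(1-\tfrac 1 n\right)^{kn} + n(n-1)\left(1-\tfrac 2 n\right)^{kn}.
\]
By the Paley--Zygmund / second moment inequality, $\p{Z \ge 1} \ge \E{Z}^2 / \E{Z^2}$, so it suffices to show $\E{Z^2}/\E{Z}^2 \to 1$. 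The diagonal contribution gives $1/\E{Z} = o(1)$. For the off-diagonal term,
\[
\frac{n(n-1)(1-2/n)^{kn}}{n^2 (1-1/n)^{2kn}}
= \left(1-\tfrac 1 n\right)\left(\frac{1-2/n}{(1-1/n)^{2}}\right)^{kn}
= \left(1-\tfrac 1 n\right)\left(1 - \tfrac{1}{(n-1)^2}\right)^{kn},
\]
and since $k \le \log n$, the second factor is $1 - O(k/n^2) = 1 - o(1)$. Thus $\E{Z^2}/\E{Z}^2 \to 1$, so $\p{Z \ge 1} \to 1$.

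The main obstacle, if any, is the algebraic manipulation of the ratio $(1-2/n)/(1-1/n)^2$ to verify that the correlation between the events $[u \text{ has in-deg }0]$ and $[v \text{ has in-deg }0]$ decays to $1$ fast enough; because $k$ is only assumed to be $o(\log n)$ below $\log n$ (in particular $k = o(n)$), the error $(k/n^2) \cdot n = k/n \to 0$ is comfortably negligible, so there is no subtle issue. Nothing in the argument uses $k \ge 2$; it simply relies on independence of arc endpoints, which is the defining property of $\randdfa$.
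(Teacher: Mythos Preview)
Your proof is correct and essentially identical to the paper's own argument: both apply the second moment method to the count of in-degree-zero vertices, compute $\E Z = n(1-1/n)^{kn}$ and $\E{Z^2}$ via the joint probability $(1-2/n)^{kn}$, and reduce the variance bound to the same algebraic identity $(1-2/n)/(1-1/n)^2 = 1 - 1/(n-1)^2$. One small wording slip: the hypothesis $k-\log n \to -\infty$ does not say $k$ is ``$o(\log n)$ below $\log n$'', only that eventually $k < \log n$; but this already gives $k = O(\log n) = o(n)$, which is all your estimate needs.
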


\iflongversion

\begin{proof}
    Let \(\omega_n = \log n - k\). 
    For vertex \(i \in [n]\), let \(X_i\) be the indicator that \(i\) has in-degree zero. Let \(N=
    \sum_{i=1}^n X_i\).  
    We use second moment method to show that \(N \ge 1\) whp. 

    To have \(X_1 = 1\), \( nk\) arcs need to avoid vertex \(1\) as their endpoints.
    Thus
    \begin{align*}
        \e X_1 
        &
        = 
        \left(1-\frac{1}{n}  \right)^{nk}
        \ge 
        e^{
            -nk\left({1}/{n}+{1}/{n^2}  \right)
        }
        =
        e^{-k \left({1}+{1}/{n}  \right)}
        =
        \left(
        \frac{e^{\omega_n}}{n}
        \right)^{1+1/n}
        .
    \end{align*}
    Since by assumption \(\omega_n \to \infty\),
    \(\e N = n \e X_1 = e^{\omega_n (1+1/n)}/n^{1/n} \to \infty\). 
    
    To have \(X_1 X_2 = 1\), \( nk\) arcs need to avoid vertices \(1\) and \(2\) as their
    endpoints. Thus
        \(
        \e X_1 X_2
        =
        \left( 1-{2}/{n} \right)^{nk}
        .
        \)
    Therefore
    \begin{align*}
        \frac{\E{X_1X_2}}{(\E{X_1})^2}
        &
        =
        \frac{(1-2/n)^{nk}}{(1-1/n)^{2nk}}
        =
        \left(
        \frac{n^2-2n}{n^2-2n+1}
        \right)^{nk}
        =
        \left(
        1
        -
        \frac{1}{(n-1)^2}
        \right)^{nk}
        \to
        1
        ,
    \end{align*}
    since \(nk/(n-1)^2 = o(1)\).
    Thus
    \begin{align*}
        1 
        \le \frac{\E{N^2}}{(\e{N})^2} 
        = \frac{\e{N} + n(n-1)\E{X_1X_2}}{(\e N)^2}
        \le \frac{1}{\e N} + \frac{\E{X_1 X_2}}{(\e X_1)^2}
        \to 1.
    \end{align*}
    Therefore \(\p{N = 0} \le \V{N}/(\e N)^2 = \E{N^2}/(\e N)^2 - 1 \to 0\).
\end{proof}

\fi

Given a set of vertices \(\cS\), if there are no arcs that start from \(\cS^c \equiv [n] \setminus
\cS\) and end at \(\cS\), then call \(\cS\) a \emph{non-leaf}.  If \(\randdfa\) is not strongly
connected, then there must exist a non-leaf set of vertices \(\cS\) with \(|\cS| < n\).  Thus the
following lemma implies the upper bound in Theorem \ref{thm:phase}.
\begin{lemma}
    \label{lem:phase:upper}
    If \(k-\log n \to +\infty\), whp there does not exist a non-leaf set of vertices \(\cS\) with
    \(|\cS| < n\).
\end{lemma}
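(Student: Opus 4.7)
The plan is a first-moment argument. For any fixed \(\cS \subseteq [n]\) with \(|\cS| = s\), \(\cS\) is a non-leaf exactly when none of the \((n-s)k\) arcs emanating from \(\cS^c\) lands in \(\cS\); since all arc endpoints are independent and uniform on \([n]\), this event has probability \(\left(\frac{n-s}{n}\right)^{(n-s)k}\). A union bound over the \(\binom{n}{s}\) subsets of each cardinality \(s \in \{1,\ldots,n-1\}\) reduces the lemma to the combinatorial estimate
\[
    \Sigma_n \;\equiv\; \sum_{s=1}^{n-1}\binom{n}{s}\left(\frac{n-s}{n}\right)^{(n-s)k} \;=\; o(1),
\]
where throughout I write \(\omega_n \equiv k - \log n \to \infty\).

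The guiding heuristic is that the \(s=1\) term already equals \(n(1-1/n)^{(n-1)k} \sim n e^{-k} = e^{-\omega_n}\), which is just the expected number of vertices with in-degree zero, and I expect it to dominate \(\Sigma_n\). I will establish the \(o(1)\) bound in three pieces. For \(s \le \sqrt{n}\), I would use \(\binom{n}{s} \le n^s/s!\) together with \((1-s/n)^{(n-s)k} \le e^{-sk(1-s/n)} = (1+o(1))e^{-sk}\), so that the \(s\)-th summand is at most \((1+o(1))^s e^{-s\omega_n}/s!\); summing via the series for \(\exp\) contributes \(O(e^{-\omega_n})\). For \(s > n/2\), the substitution \(t = n-s\) converts each summand to \(\binom{n}{t}(t/n)^{tk} \le [e(t/n)^{k-1}]^t \le (e/2^{k-1})^t\), whose sum is a geometric tail of order \(O(2^{-k}) = o(1)\).

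The main obstacle is the intermediate range \(\sqrt{n} \le s \le n/2\), where \(\binom{n}{s}\) can be as large as \(2^n/\sqrt{n}\) and the Poisson-type bound from the first piece is no longer sharp. The remedy is to pair the entropic estimate \(\binom{n}{s} \le (en/s)^s\) with the cruder exponential decay \((1-s/n)^{(n-s)k} \le e^{-sk/2}\) valid throughout this range. Each summand is then at most \((ene^{-k/2}/s)^s\), and since \(ene^{-k/2} = e\sqrt{n}\,e^{-\omega_n/2}\) while \(s \ge \sqrt n\), the base is \(O(e^{1-\omega_n/2}) = o(1)\) uniformly in \(s\), giving a rapidly decaying geometric tail. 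Combining the three estimates yields \(\Sigma_n = O(e^{-\omega_n}) = o(1)\), completing the proof.
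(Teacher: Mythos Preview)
Your approach is the same first-moment union bound as the paper's, split by the size \(s\) of the candidate set. One pleasant difference: where the paper invokes the existence of a giant strongly connected component reachable from every vertex to dispose of \(s \ge n/2\), you handle that range directly via the substitution \(t = n - s\), making the argument self-contained. The paper's cutoffs are \(n/\log n\) and \(n/2\) rather than your \(\sqrt{n}\) and \(n/2\), but that is cosmetic.

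There is one small wrinkle in your first piece. The claim \(e^{-sk(1-s/n)} = (1+o(1))e^{-sk}\) requires \(s^2 k/n = o(1)\), and for \(s\) near \(\sqrt{n}\) this would force \(k = o(\sqrt{n})\), which is not assumed. The fix is immediate: bound the \(s\)-th summand by \(\alpha_n^{\,s}/s!\) with
\[
\alpha_n \;=\; n\,e^{-k(1-1/\sqrt{n})} \;=\; \exp\!\bigl\{-\omega_n(1-1/\sqrt{n}) + (\log n)/\sqrt{n}\bigr\},
\]
note that \(\alpha_n \to 0\) since \(\omega_n \to \infty\) and \((\log n)/\sqrt{n} \to 0\), and sum via \(e^{\alpha_n} - 1\). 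This is precisely what the paper does, with \(1/\log n\) in place of your \(1/\sqrt{n}\).
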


\iflongversion
\begin{proof}
    By the argument at the beginning of this subsection, whp \(\randdfa\) contains a \scc{} of size
    at least \(n/2\) that is reachable form all vertices.
    So if \(|\cS| \ge n/2\), then \(\cS\) contains part of this \scc{} and cannot be a non-leaf.
    Thus it suffices to prove the lemma for \(\cS\) with \(|\cS| < n/2\).

    Let \(\omega_n = k - \log n\). For \(s \in [
        \floor{n/2}]\), let \(X_s\) be the number of non-leaf sets of vertices of size
        \(s\).  Thus 
    \begin{align*}
        \e X_s 
        &
        = 
        \binom{n}{s}\left( 1-\frac{s}{n} \right)^{k(n-s)}
        \le
        \binom{n}{s} e^{-ks(1-s/n)}
        \label{eq:phase:low}
        \numberthis
        .
    \end{align*}
    Therefore for \(s < n / \log n\),
    \begin{align*}
        \e X_s
        &
        \le
        \frac{n^s}{s!}
        e^{-ks(1-s/n)}
        \le
        \frac{1}{s!}
        \left(
        \frac{n}{e^{k(1-s/n)}}
        \right)^s
        \le
        \frac{1}{s!}
        \left(
        \frac{n}{(n e^{\omega_n} )^{1-1/\log n}}
        \right)^s
        \equiv
        \frac{\alpha_n^s}{s!}
        .
    \end{align*}
    By assumption \(\omega_n \to \infty\). Thus \(\alpha_n \equiv
    n^{1/\log n}/e^{\omega_n(1-1/\log n)} = e^{1-\omega_n(1-1/\log n)}=o(1)\).
    Therefore,
    \begin{align*}
        \sum_{1 \le s < n/\log n}
        \e X_s
        \le 
        \sum_{1 \le s}
        \frac{\alpha_n^s}{s!}
        =
        e^{\alpha_n} - 1
        = o(1).
    \end{align*}
    On the other hand, 
    it follows from \eqref{eq:phase:low} that for \(n/\log n \le s < n/2\),
    \begin{align*}
        \e X_s 
        \le 
        \left( \frac{en}{s} \right)^s e^{-ks(1-s/n)}
        = 
        \left( \frac{en}{s e^{k(1-s/n)}} \right)^s
        \le 
        \left( \frac{en}{\frac{n}{\log n} e^{k/2}} \right)^s
        =
        \left( \frac{e \log n}{(ne^{\omega_n})^{1/2}} \right)^s
        \equiv
        \beta_n^s.
    \end{align*}
    Since \(\beta_n = e \log n/(n e^{\omega_n})^{1/2} = o(1)\),
    \begin{align*}
        \sum_{{n}/{\log n} \le s < n/2}
        \e X_s
        \le
        \sum_{1 \le s}
        \beta_n^s
        = \bigO{\beta_n}
        = o(1).
    \end{align*}
    Thus \(\p{\sum_{1 \le s < {n}/2} X_s \ge 1} \le 
    \sum_{1 \le s < {n}/2} \e X_s = o(1).
    \)
\end{proof}
\fi

\iflongversion
\else
We omit the proofs of the above two lemmas as they use standard first and second moment methods.
\fi

\section{The simple digraph model, the number of self-loops and multiple arcs}

\label{sec:simple}

A \emph{simple} digraph is one in which there are no self-loops and there is
no more than one arc from one vertex to another. Let \(\randsimple\) denote a simple \(k\)-out
digraph with \(n\) vertices chosen uniformly at random from all such digraphs. 
\(\randsimple\) can be viewed as \(\randdfa\) restricted to the event
that \(\randdfa\) is simple. 
This section proves the following theorem:
\begin{thm}
    \label{thm:simple}
    The probability that \(\randdfa\) is simple converges to
    \(e^{-k-\binom{k}{2}}\) as \(n \to \infty\).
\end{thm}

Theorem \ref{thm:simple} can be proved directly as follows. Let \(\indd{v}\) be the indicator that the
\(k\) arcs starting from vertex \(v\) do not end at \(v\) and do not end at the same vertex. 
Then
\[
    \p{\indd{v} = 1} = \frac{(n-1)(n-2)\cdots (n-k)}{n^{k}}
    = 1 - \frac{k(k+1)}{2n} + \bigO{\frac{1}{n^2}}
    .
\]
Since \(\randdfa\) is simple if and only if \(\cap_{v = 1}^n [\indd{v}=1]\) happens, we have
\begin{align*}
    \p{\text{\(\randdfa\) is simple}}
    &
    = \p{\cap_{v=1}^{n} \left[ \indd{v} = 1 \right]}
    = \prod_{v=1}^{n} \p{\indd{v} = 1}
    \\
    &
    = \left( 1 - \frac{k(k+1)}{2n} + \bigO{\frac{1}{n^2}}  \right)^{n}
    \to e^{-k(k+1)/2}
    = e^{-k-\binom{k}{2}}
    .
\end{align*}

However, we can say more about self-loops and multiple arcs between vertices.
Let \(\cI \equiv [n]\times[k]\).
For \( (v, i) \in \cI\), define the random variable \(\indd{v,i}\) to be the
indicator that the arc with label \(i\) starting from vertex \(v\) forms a self-loop. 
Let 
\(
    \cJ \equiv \{ (v,i,j) \in [n]\times[k]\times[k]:i <j \}.
\)
For \( (v,i,j) \in \cJ\), define the random variable \(\indd{v,i,j}\) to
be the indicator that the two arcs starting from vertex \(v\)
with labels \(i\) and \(j\) both end at the same vertex. 
Let
\(
    S_n 
    = \sum_{\alpha \in \cI} \indd{\alpha}
    \text{ and }
    M_n = \sum_{\alpha \in \cJ} \indd{\alpha}
    .
\)
Then \([S_n = 0] \cap [M_n = 0]\) if and only if \(\randdfa\) is simple.
\begin{lemma}
    \label{lem:simple}
    Let \(S\) and \(M\) be two independent Poisson random variables of means
    \(k\) and \(\binom{k}{2}\) respectively. Then \( (S_n, M_n) \inlaw (S,M)\) as
    \(n \to \infty\). In fact,
    \[
        \dtv{(S_n, M_n), (S,M)} = \bigO{\frac{1}{n}}.
    \]
\end{lemma}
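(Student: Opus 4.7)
The plan is to apply the multivariate Chen--Stein method to the combined family of indicators \(\{\indd{\alpha} : \alpha \in \cI \cup \cJ\}\), approximating it by a Poisson point process with intensity \(1/n\) at each index. Since \(S_n\) and \(M_n\) are the counts of this family over the two disjoint subsets \(\cI\) and \(\cJ\), a total variation bound on the process immediately upgrades to one on \( (S_n, M_n)\), and the counts of a Poisson point process over disjoint subsets are automatically \emph{independent} Poissons---which is precisely how the limit \( (S, M)\) will appear.

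First, a direct computation gives \(\E{\indd{\alpha}} = 1/n\) for every \(\alpha\): a single arc self-loops with probability \(1/n\), and two arcs from the same vertex share an endpoint with probability \(\sum_{u=1}^n (1/n)^2 = 1/n\). Hence \(\E{S_n} = k\) and \(\E{M_n} = \binom{k}{2}\), matching the means of \(S\) and \(M\). Next I would define the dependency neighborhood \(B_\alpha\) of each indicator to be the set of indicators depending only on the \(k\) arcs leaving the same vertex as \(\alpha\). Since arcs leaving different vertices are chosen independently, the mixing term \(b_3\) of \citet{Arratia1989} vanishes. The first quantity is then immediate:
\[
    b_1 \;\le\; n \left(k + \binom{k}{2}\right)^{2} \cdot \frac{1}{n^{2}} \;=\; \bigO{\frac{1}{n}}.
\]
For \(b_2 = \sum_\alpha \sum_{\beta \in B_\alpha,\, \beta \ne \alpha} \E{\indd{\alpha}\indd{\beta}}\) I would go through the small number of cases for two indicators sharing a vertex and verify that in each case \(\E{\indd{\alpha}\indd{\beta}} = O(1/n^{2})\), so \(b_2 = \bigO{1/n}\) as well. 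Applying the process version of the Chen--Stein theorem (e.g.\ Theorem~2 of \citet{Arratia1989}, or Chapter~10 of \citet{Barbour1992poisson}) then yields \(\dtv{(S_n, M_n),\, (S, M)} = \bigO{1/n}\).

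The main obstacle is the case analysis behind \(b_2\): one must enumerate every type of overlap between two indicators sharing a vertex---two self-loops, a self-loop plus a coincidence (with or without a shared label), and two coincidences (with disjoint labels or with one shared label)---and confirm the \(O(1/n^{2})\) bound in each. The only slightly non-obvious subcase is two coincidences \( (v, i, j)\) and \( (v, j, \ell)\) sharing exactly one label, where the joint event forces three arcs to all point to the same vertex and hence contributes \(n \cdot (1/n)^{3} = 1/n^{2}\). Once this bookkeeping is in place, the joint independence of \(S\) and \(M\) in the limit is automatic from the disjointness of \(\cI\) and \(\cJ\) in the limiting Poisson point process, so no further work is needed.
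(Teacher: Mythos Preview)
Your proposal is correct and follows essentially the same route as the paper: apply the Chen--Stein bound of \citet{Arratia1989} to the combined family \(\{\indd{\alpha}:\alpha\in\cI\cup\cJ\}\), observe that \(b_3=0\) because indicators from different vertices are independent, and verify \(b_1,b_2=O(1/n)\) via the short case analysis you describe (the paper carries out exactly these cases, including the three-arcs-to-one-vertex subcase you flag). The only cosmetic difference is that you take \(B_\alpha\) to be all indicators at the same vertex, whereas the paper uses the slightly smaller set of genuinely dependent indicators; both have size \(O_k(1)\), so the resulting bounds are the same.
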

\noindent Indeed the lemma implies that as \(n \to \infty\),
\[
    \p{\randdfa \text{ is simple}}
    =
    \p{S_n = M_n = 0}
    \to
    \p{S = 0} \p{M = 0}
    = e^{-k} e^{-\binom{k}{2}}.
\]

\begin{myRemark*}
    \citet{Bollobas1980311} proved a theorem similar to Lemma \ref{lem:simple} for the configuration model
    (see also \citet[sec. 2.4]{bollobas2001random}).
    Many authors have extended this result under various conditions, see, e.g., 
    \citet{mckay1985asymptotics, mckay1991asymptotic,janson2009, janson2014}.
    Our proof uses Stein's method, which may also be applied
    to self-loops and multiple edges in the configuration model to get proofs shorter than previous ones.
\end{myRemark*}

\begin{proof}[Proof of Lemma \ref{lem:simple}]
    We use the Chen-Stein method \cite{Chen1975}. 
    Since the probability that an arc forms a self-loop is \(1/n\),
    \[
        \e S_n = \sum_{(v,i) \in \cI} \e \indd{v,i} = kn \frac{1}{n} = k.
    \]
    Thus \(\e S = k =\e S_n \).
    Since the probability that two arcs with the same start point have the
    same endpoint is also \(1/n\),
    \[
        \e M_n 
        = \sum_{v \in [n]} \sum_{1 \le i < j \le k} \e \indd{v,i,j}
        = n \binom{k}{2} \frac{1}{n} = \frac{k(k-1)}{2}.
    \]
    Thus \(\e M = k(k-1)/2 = \e M_n\).

    For \(\alpha \in \cI \cup \cJ\), let 
    \[
        \cB_{\alpha}
        = 
        \{
            \beta 
            \in \cI \cup \cJ 
            : 
            \text{$\indd{\beta}$ and $\indd{\alpha}$ are dependent} 
        \}
        .
    \]
    (Note that \(\indd{\alpha} \in \cB_{\alpha}\).)
    Define
    \begin{align*}
        b_1 
        \equiv 
        \sum_{\alpha \in \cI \cup \cJ} 
        \sum_{\beta \in \cB_{\alpha}}
        \E{\indd{\alpha}} 
        \E{\indd{\beta}}
        ,
        \quad
        b_2 
        \equiv 
        \sum_{\alpha \in \cI \cup \cJ} 
        \sum_{\beta \in \cB_{\alpha}:\alpha \ne \beta}
        \E{\indd{\alpha}\indd{\beta}} 
        ,
        \quad
        b_3 
        \equiv 
        \sum_{\alpha \in \cI \cup \cJ} s_\alpha,
    \end{align*}
    where
    \[
        s_{\alpha} 
        = \e 
        \left| 
        \E{
            \indd{\alpha} \,| \,
            \sigma\left(\indd{\beta}:{\beta} \in [\cI \cup \cJ] \setminus
            \cB_{\alpha} \right)} 
            - 
            \e \indd{\alpha}
        \right|.
    \]
    By \cite[thm. 2]{Chen1975}, if \(b_1 + b_2 + b_3 \to 0\), then 
    \( (S_n, M_n) \inlaw \allowbreak (S,M)\). Since \(\indd{\alpha}\) is independent of the random
    variables \(\indd{\beta}\) with \(\beta \in [\cI \cup \cJ] \setminus \cB_{\alpha}\), 
    we have \(s_\alpha = 0\) and thus \(b_3 = 0\).

    For \( (v,i) \in \cI\), \(\indd{v,i}\) depends on the random variables \(\indd{v,r,s}\) with \( 1 \le r < s \le k\) and
    \(i \in \{r,s\}\), of which there are \(k-1\). 
    Thus \(|\cB_{v,i}| = 1 + (k-1) = k < 2k\).
    For \( (v,i,j) \in \cJ \), \(\indd{v,i,j}\) depends on 
    \(\indd{v,i}\) and \(\indd{v,j}\).
    It also depends on the random variables \(\indd{v,r,s}\) with
    \( 1 \le r < s \le k\) and \( \{r,s\} \cap \{i,j\} \ne \emptyset\), of which there are \(2(k-1)-1= 2k-3\). 
    Thus \(|\cB_{v,i,j}| = 2 + 2k-3 < 2k\). So for all \(\alpha \in \cI \cup
    \cJ\), \(|\cB_{\alpha}| < 2k\).
    Therefore
    \begin{align*}
        b_1 
        &
        = 
        \sum_{\alpha \in \cI} \sum_{\beta \in \cB_{\alpha}} \E{\indd{\alpha}}
        \E{\indd{\beta}}
        +
        \sum_{\alpha \in \cJ} \sum_{\beta \in \cB_{\alpha}} \E{\indd{\alpha}}
        \E{\indd{\beta}}
        \\
        &
        <
        nk  \times 2k \times  \frac{1}{n} \times \frac{1}{n}
        +
        n  \binom{k}{2} \times 2k \times  
        \frac{1}{n} \times \frac{1}{n}
        = \bigO{\frac{1}{n}}
        .
    \end{align*}

    Consider \( (v,i) \in \cI\). 
    If \(\beta \in \cB_{v,i} \cap \cI\), then \(\beta = (v,i)\).
    If \(\beta \in \cB_{v,i} \cap \cJ\), then \(\beta = (v,r,s)\) for some \(
    (r,s) \) with \(i \in \{r, s\}\).
    Then \(\indd{v,i}\indd{v,r,s} = 1\) if and only if the
    two arcs starting from vertex \(v\) labeled \(r\) and \(s\) respectively both
    end at \(v\).
    Thus \(\E{\indd{v,i}\indd{v,r,s}} = 1/n^2\). Therefore
    \[
        b_{2,\cI} 
        \equiv
        \sum_{\alpha \in \cI} \sum_{\beta \in \cB_\alpha:\beta \ne \alpha}
        \E{\indd{\alpha}\indd{\beta}}
        =
        \sum_{\alpha \in \cI} \sum_{\beta \in \cB_\alpha \cap \cJ}
        \E{\indd{\alpha}\indd{\beta}}
        <
        nk \times 2k \times \frac{1}{n^2}
        = 
        \bigO{\frac{1}{n}}
        .
    \]

    Consider \( (v,r,s) \in \cJ\). If \( (v,i) \in \cB_{v,r,s}\), then \(
    (v,r,s) \in \cB_{v,i}\). 
    Thus by the above argument  \(\E{\indd{v,r,s}\indd{v,i}} = 1/n^2\).
    If \( (v,i,j) \in \cB_{v,r,s}\) and \( (i,j) \ne (r,s) \), 
    then \(|\{r,s\} \cup \{i,j\}| = 3\).
    So \(\indd{v,r,s}\indd{v,i,j} = 1\) iff the three arcs starting from vertex \(v\) with labels in \( \{r,s\} \cup
    \{i,j\} \) all end at the same vertex. Thus \(\E{\indd{v,r,s}\indd{v,i,j}} = 1/n^2\). 
    Therefore
    \begin{align*}
        b_{2,\cJ} 
        \equiv
        \sum_{\alpha \in \cJ} \sum_{\beta \in \cB_\alpha:\beta \ne \alpha} 
        \E{\indd{\alpha}\indd{\beta}}
        < 
        n \binom{k}{2} \times 2k \times \frac{1}{n^2}
        = 
        \bigO{\frac{1}{n}}
        .
    \end{align*}
    Thus \(b_2 \equiv b_{2,\cI} + b_{2, \cJ} = O(1/n)\).
\end{proof}

\begin{corollary}
    \label{cor:simple}
    Let \(\cE\) be a set of digraphs. If \(\randdfa \in \cE\) whp, then
    \(\randsimple \in \cE\) whp.
\end{corollary}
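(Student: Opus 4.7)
The plan is to interpret $\randsimple$ as the conditional distribution of $\randdfa$ given the event that $\randdfa$ is simple, and then use the fact that by Theorem~\ref{thm:simple} this conditioning event has a probability bounded away from zero. Since $\randdfa$ is uniform over all $k$-out digraphs on $[n]$, restricting it to the set of simple such digraphs yields a uniform distribution over simple $k$-out digraphs, which is exactly $\randsimple$.

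More precisely, let $A_n$ denote the event $[\randdfa \text{ is simple}]$. Then for any digraph property $\cE$,
\[
    \p{\randsimple \in \cE} = \p{\randdfa \in \cE \mid A_n}
    = \frac{\p{\randdfa \in \cE, A_n}}{\p{A_n}}.
\]
By Theorem~\ref{thm:simple}, $\p{A_n} \to e^{-k-\binom{k}{2}}$, a positive constant (recall $k$ is fixed). Hence, assuming $\p{\randdfa \notin \cE} = o(1)$,
\[
    \p{\randsimple \notin \cE}
    \le \frac{\p{\randdfa \notin \cE}}{\p{A_n}}
    = \frac{o(1)}{e^{-k-\binom{k}{2}} + o(1)}
    = o(1).
\]

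There is essentially no obstacle here: the corollary is an immediate consequence of Theorem~\ref{thm:simple} together with the elementary principle that conditioning on an event of non-vanishing probability cannot destroy a whp statement. The only thing worth being careful about is justifying that $\randsimple$ really is $\randdfa$ conditioned on $A_n$, which follows from the uniformity of $\randdfa$ on the sample space of all $k$-out digraphs and the definition of $\randsimple$ as a uniform simple $k$-out digraph.
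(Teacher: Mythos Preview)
Your proof is correct and essentially identical to the paper's own argument: the paper also writes \(\p{\randsimple \notin \cE} = \p{\randdfa \notin \cE \mid \randdfa \text{ is simple}} \le \p{\randdfa \notin \cE}/\p{\randdfa \text{ is simple}} \to 0\), invoking Theorem~\ref{thm:simple} for the denominator. If anything, you are slightly more careful in spelling out why \(\randsimple\) coincides with the conditional law of \(\randdfa\).
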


\begin{proof} We have
\[
    \p{\randsimple \notin \cE} 
    =
    \p{\randdfa \notin \cE \,|\, \randdfa \text{ is simple}} 
    \le
    \frac{\p{\randdfa \notin \cE_n}}{\p{\randdfa \text{ is simple}}}
    \to 0.
    \tag*{\qedhere}
\]
\end{proof}
\noindent This corollary implies that all previous results in the form of ``whp \(\randdfa\)
\dots'' can be automatic translated into ``whp \(\randsimple\) \dots''.
For example, the statement of Theorem~\ref{thm:dag} with
\(\randdfa\) replaced by \(\randsimple\) is still true.

\begin{corollary}
    \label{cor:simple:unlabel}
    Let \(\randsimpleu\) be a digraph chosen uniformly at random from all
    simple and arc-unlabeled \(k\)-out digraphs with \(n\) vertices.  If whp
    \(\randdfa\) has property {\textbf P} where {\textbf P} does not depend on
    arc-labels, then whp \(\randsimpleu\) has property {\textbf P}.
\end{corollary}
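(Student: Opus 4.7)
The plan is to reduce the unlabeled case to the labeled one by a simple counting argument. Observe that there is a natural ``forget the labels'' map $\Phi$ from simple labeled $k$-out digraphs on $[n]$ to simple unlabeled $k$-out digraphs on $[n]$. For each vertex $v$, a simple labeled digraph specifies an injective function from $[k]$ into $[n] \setminus \{v\}$, whereas an unlabeled one specifies only the image set; hence $\Phi$ is uniformly $(k!)^n$-to-$1$. Consequently, the push-forward of the uniform distribution on simple labeled $k$-out digraphs under $\Phi$ is precisely the uniform distribution on simple unlabeled $k$-out digraphs. Equivalently, $\Phi(\randsimple)$ has the same distribution as $\randsimpleu$.

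Since property $\mathbf{P}$ does not depend on arc-labels, $\randsimple$ has property $\mathbf{P}$ if and only if $\Phi(\randsimple)$ does. Therefore
\[
    \p{\randsimpleu \text{ has property } \mathbf{P}}
    =
    \p{\Phi(\randsimple) \text{ has property } \mathbf{P}}
    =
    \p{\randsimple \text{ has property } \mathbf{P}},
\]
and the right-hand side tends to $1$ by Corollary~\ref{cor:simple}.

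There is no real obstacle here: the only subtlety is verifying that $\Phi$ is uniformly $(k!)^n$-to-$1$, which follows because distinct labelings at different vertices are independent and each vertex of a simple unlabeled digraph has exactly $k$ distinct out-neighbors admitting exactly $k!$ orderings. Everything else is definitional.
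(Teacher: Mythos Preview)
Your proof is correct and follows essentially the same approach as the paper: both establish that the forget-the-labels map from simple labeled $k$-out digraphs to simple unlabeled ones is uniformly $(k!)^n$-to-$1$, deduce that $\randsimpleu$ is distributed as $\randsimple$ with labels removed, and then invoke Corollary~\ref{cor:simple}. Your explanation of why the fibers all have size $(k!)^n$ is slightly more explicit than the paper's, but the argument is the same.
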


\begin{proof}
Note that: (a) for each digraph in the space of \(\randsimpleu\), there \(
(k!)^n \) ways to arc-label it to get \( (k!)^n \) different digraphs in the
space of \(\randsimple\); (b) no two different arc-unlabeled digraphs 
can be turned into the same digraph by arc-labeling.  So there exists a \( (k!)^n
\)-to-one surjective mapping from the space of \(\randsimple\) to the space of
\(\randsimpleu\). Thus \(\randsimpleu\) can be viewed as \(\randsimple\) with
arc labels removed. 
Since {\textbf P} does not depend on
arc-labels, it follows from Corollary \ref{cor:simple} that whp \(\randsimpleu\) has property {\textbf P}.
\end{proof}

\iflongversion

\section{The typical distance}

\label{sec:typical:distance}

The typical distance \(H_n\) of \(\randdfa\) is the distance between two
vertices \(v_1\) and \(v_2\) chosen uniformly at random. If \(v_1\) cannot
reach \(v_2\), then \(H_n = \infty\). \citet{Perarnau2014} proved that
conditioned on \(H_n < \infty\), \(H_n/\log_k n \inprob 1\).  This 
section\footnote{In a shorter version of this paper, this section is omitted.}
gives an alternative proof using the path counting technique invented by
\citeauthor{Van2014randomV2} \cite[chap.~3.5]{Van2014randomV2}.

\begin{thm}[The typical distance]
    For all
    \(\varepsilon>0\),
    \[
        \p{
            \left. 
                \left| \frac{H_n}{\log_k n} - 1 \right| > \varepsilon 
                ~
            \right| 
                ~
            H_n < \infty
        } 
        = o(1).
        \]
    \label{thm:typical:dist}
\end{thm}

By Theorem~\ref{thm:CLL}, \(|\spec{v_1}|/n \inprob \nuk\), where \(\spec{v_1}\)
is the spectrum of \(v_1\).  Thus \(\p{H_n < \infty} = \p{v_2 \in \spec{v_1}} \to \nuk > 0\).  Therefore
\begin{align*}
    \p{H_n < (1-\varepsilon)\log_k n~|~H_n < \infty} 
    & = 
        \frac
        {\p{H_n < (1-\varepsilon)\log_k n}}
        {\p{H_n < \infty}} 
        \\
    & \sim 
        \frac
        {1}
        {\nuk} 
        {\p{H_n < (1-\varepsilon)\log_k n}}
    ,
\end{align*}
and 
\begin{align*}
    \p{H_n > (1+\varepsilon)\log_k n~|~H_n < \infty} 
    & = 
        \frac
        {\p{(1+\varepsilon)\log_k n < H_n < \infty}}
        {\p{H_n < \infty}} 
        \\
    & \sim 
        \frac
        {1}
        {\nuk} 
        {\p{(1+\varepsilon)\log_k n < H_n < \infty}}
    \equiv 
        \frac
        {\p{B_n}}
        {\nuk}
    .
\end{align*}
Thus it suffices to show that 
\(\p{H_n < (1-\varepsilon)\log_k n}\)
and \(\p{B_n}\) are both \(o(1)\).
\begin{lemma}[Lower bound of the typical distance]
    \label{lem:typical:dist:lower}
    For all $\varepsilon> 0$, 
    \[
        \p{H_n < (1-\varepsilon)\log_k n} = o(1)
        .
    \]
\end{lemma}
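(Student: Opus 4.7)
The plan is a first-moment / path-counting argument, in the spirit of the path-counting technique mentioned in the introduction. Let $N_\ell$ denote the number of directed walks of length exactly $\ell$ from $v_1$ to $v_2$ in $\randdfa$. Since every vertex has exactly $k$ labeled out-arcs, a walk of length $\ell$ starting at $v_1$ is uniquely specified by its sequence of arc-labels $(l_1,\ldots,l_\ell) \in [k]^\ell$; writing $U_\ell(l_1,\ldots,l_\ell) \in [n]$ for the resulting endpoint, this is a deterministic function of $v_1$ and of $\randdfa$ alone. Because $v_2$ is chosen uniformly on $[n]$ independently of $v_1$ and of $\randdfa$, for every fixed label sequence one has $\p{U_\ell = v_2} = 1/n$. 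Summing over label sequences,
\[
\E{N_\ell} \;=\; \sum_{(l_1,\ldots,l_\ell) \in [k]^\ell} \p{U_\ell(l_1,\ldots,l_\ell) = v_2} \;=\; \frac{k^\ell}{n}.
\]

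Next I would note that $\{H_n \le \ell\}$ is precisely $\{\sum_{\ell' \le \ell} N_{\ell'} \ge 1\}$, since the existence of any walk of length at most $\ell$ from $v_1$ to $v_2$ is equivalent to the existence of a shortest such walk, whose length is $H_n$. Markov's inequality and the geometric sum then yield
\[
\p{H_n \le \ell} \;\le\; \sum_{\ell'=0}^{\ell} \E{N_{\ell'}} \;=\; \frac{1}{n}\cdot\frac{k^{\ell+1}-1}{k-1} \;\le\; \frac{k^{\ell+1}}{(k-1)\,n}.
\]
Substituting $\ell = \lfloor (1-\varepsilon)\log_k n\rfloor$ gives $k^{\ell+1} \le k\cdot n^{1-\varepsilon}$, so the bound becomes $\bigO{n^{-\varepsilon}} = o(1)$, as required.

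There is essentially no obstacle here: the one key observation is that since $v_2$ is chosen uniformly and independently of the graph, any specified label-sequence walk from $v_1$ hits $v_2$ with probability exactly $1/n$, which trivializes the expectation. The only subtlety is that one should count walks (label-sequences) rather than simple paths, so that the computation decouples completely from the question of vertex repetitions along the walk; this is legitimate because $H_n \le \ell$ is equivalent to the existence of any walk of length at most $\ell$, not necessarily a simple one.
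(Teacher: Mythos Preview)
Your proof is correct and follows the same first-moment strategy as the paper: bound the expected number of length-$\ell$ paths/walks from $v_1$ to $v_2$ by $k^\ell/n$, sum over $\ell < (1-\varepsilon)\log_k n$, and apply Markov's inequality. The only (pleasant) difference is that you count label-sequence walks and exploit the uniform randomness of $v_2$ independent of the graph to obtain $\E{N_\ell}=k^\ell/n$ exactly, whereas the paper counts labeled simple paths and uses the graph randomness (each specified arc hits its target with probability $1/n$) to reach the same bound; your variant is a touch slicker but relies on $v_2$ being random, while the paper's argument goes through for any fixed pair $v_1,v_2$.
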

\begin{proof}
    Let \(N_{\ell}\) denote the
    number of paths from \(v_1\) to \(v_2\) of length \(\ell\).
    Consider such a path without labels on internal vertices and arcs. There
    are at most \(n^{\ell-1}\) ways to label its internal vertices and there are at
    most \(k^{\ell}\) ways to label its arcs. And the probability that such a
    labeled path appears is \( (1/n)^{\ell}\).  Thus
    \[
        \e N_{\ell} 
        \le 
        n^{\ell-1} k^{\ell} \left(\frac{1}{n}  \right)^{\ell}
        = 
        \frac{k^{\ell}}{n}
        .
    \]
    Let \(\omega_n = (1-\varepsilon)\log_k n\).  Then
    \[
        \sum_{\ell < \omega_n} \e N_{\ell} 
        \le \sum_{\ell < \omega_n} \frac{k^{\ell}}{n}
        = \frac{\bigO{k^{\omega_n}}}{n} 
        = \frac{\bigO{n^{1-\varepsilon}}}{n} 
        = o(1).
    \]
    Thus \(\p{H_n < \omega_n} = \p{\sum_{\ell < \omega_n} N_{\ell} \ge 1} = o(1)\).
\end{proof}

The rest of this section is organized as follows:
Subsection~\ref{sec:typical:distance:tree} shows that if \(v_1\) can reach
\(v_2\) but only through a very long path, then it is very likely that \(v_1\)
can reach a lot of vertices and a lot of vertices can reach \(v_2\).
Subsection~\ref{sec:typical:distance:counting} computes a lower bound of the
probability that there is a path of specific length from one large set of vertices to
another large set of vertices. Finally, subsection~\ref{sec:typical:dist:upper}
shows that these results together imply the upper bound in
Theorem~\ref{thm:typical:dist}, i.e., \(\p{B_n} = o(1)\).

\subsection{Comparison to Galton-Watson processes}

\label{sec:typical:distance:tree}

\newcommand{\Yone}{{Y}^{+}}
\newcommand{\Ytwo}{{Y}^{-}}

\newcommand{\cYone}{{\cY}^{+}}
\newcommand{\cYtwo}{{\cY}^{-}}

Let \(\cS_{m}^{+}(v)\) and \(\cS_{m}^{-}(v)\) be the sets of vertices at
distance exactly \(m\) from or to vertex \(v\) respectively.  Let \(\cS_{\le
m}^{+}(v)\) and \(\cS_{\le m}^{-}(v)\) be the sets of vertices at distance at
most \(m\) from  or to vertex \(v\) respectively. The following proposition shows
that for fixed \(m\), we can perfectly couple \((|\speconedist{t}|,
|\spectwodist{t}|)_{t=0}^{m} \) with two independent Galton-Watson processes.
It is inspired by a similar result of the
configuration model by \citeauthor{Van2014randomV2} \cite[sec.
5.2]{Van2014randomV2}, but the coupling method used here is new.

\begin{proposition}
    \label{prop:branching:approx}
    Let \( (S_t)_{t \ge 0} \) be a Galton-Watson process with a binomial offspring distribution
    \(\Bin(kn, 1/n)\). For all fixed \(m \ge 1\),
    there exists a coupling 
    \[ 
        \left[ 
        \left(k^{t}, {Y}_{t} \right)_{t=0}^{m} 
        ,
        \left(\Yone_t, \Ytwo_t \right)_{t=0}^{m}
        \right]
        ,
    \]
    of
    \((k^t, {S}_{t})_{t=0}^{m}\)
    and \(
    (|\speconedist{t}|, |\spectwodist{t}|)_{t=0}^{m} \),
    such that 
    \[
    \p{
        \left(k^{t}, {Y}_{t} \right)_{t=0}^{m} 
        \ne
        \left(\Yone_t, \Ytwo_t \right)_{t=0}^{m}
    } 
    = o(1) 
        .  
    \]
\end{proposition}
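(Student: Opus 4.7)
The plan is to construct a joint coupling of the forward BFS from $v_1$, the backward BFS from $v_2$, and an independent Galton-Watson process $(S_t)_{t\ge 0}$ with offspring distribution $\Bin(kn,1/n)$, revealing the arcs of $\randdfa$ only on demand via the principle of deferred decisions. The target event is that throughout the first $m$ layers (i) the forward BFS forms a perfect labeled $k$-ary tree, so that $|\speconedist{t}| = k^t$ for $t \le m$, and (ii) each vertex processed by the backward BFS acquires a number of new in-neighbors equal to an attached independent $\Bin(kn,1/n)$ copy, so that $|\spectwodist{t}| = S_t$ for $t \le m$.

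First I would handle the forward side. Because every vertex has exactly $k$ labeled out-arcs, the event $|\speconedist{t}| = k^t$ for all $t \le m$ holds precisely when the first $m$ layers of the out-BFS from $v_1$ contain no \emph{forward collision}: no revealed out-arc's endpoint coincides with an already discovered vertex (from either the forward or the backward exploration) or with another newly revealed endpoint in the same layer. Since $m$ is fixed, the explored forward region contains $k + k^2 + \cdots + k^m = \bigO{1}$ vertices and arcs, and each arc's endpoint is uniform on $[n]$ and independent of previously revealed arcs up to $\bigO{1/n}$ total variation. A union bound over the $\bigO{1}$ possible collisions yields failure probability $\bigO{1/n}=o(1)$.

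Next I would treat the backward side. The key observation is that the total number of arcs landing at any fixed vertex $v$ is $\Bin(kn,1/n)$, since the $kn$ arc endpoints are independent and each lands at $v$ with probability $1/n$. I would process the backward BFS vertex by vertex in BFS order and, for each processed vertex $v$, couple an attached independent $\Bin(kn,1/n)$ variable $C_v$ (feeding $S_t$) with the actual number $P_v$ of newly discovered in-neighbors of $v$. A discrepancy $C_v \ne P_v$ forces a \emph{backward collision}: an in-arc of $v$ originating from a vertex already discovered on either side, two in-arcs of $v$ sharing a source (a multi-arc), or an in-arc of $v$ coming from a vertex whose out-arcs have already been exposed by the forward BFS. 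Because the explored region is always of size $\bigO{k^m} = \bigO{1}$, each such collision has probability $\bigO{1/n}$, and a union bound over the $\bigO{1}$ potential collisions gives error $\bigO{1/n} = o(1)$.

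The main obstacle will be reconciling the genuine independence of $(k^t)$ and $(S_t)$ in the target process with the fact that the forward and backward BFS in $\randdfa$ are built from a shared pool of $kn$ random arcs. I would handle this by interleaving the reveals --- at step $t$ first exposing the out-arcs of $\speconedist{t-1}$ and then the in-arcs of $\spectwodist{t-1}$ --- and observing that on the complement of all the forward and backward collision events, the arcs used on the two sides are disjoint, so the endpoints exposed on the forward side are independent of the in-arc counts fuelling the backward side. Conditioning on previously revealed arcs perturbs each new endpoint's distribution by at most $\bigO{1/n}$ in total variation, which can be absorbed into the coupling error, and on the good event the $C_v$'s match the true new-in-neighbor counts, yielding $S_t = |\spectwodist{t}|$ for all $t\le m$. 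Combining the three $o(1)$ estimates through a final union bound gives the claimed coupling.
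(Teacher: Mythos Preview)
Your proposal is correct and follows essentially the paper's own proof: the paper constructs an explicit two-sided signal process ($\oplus$ from $v_1$ forward, $\ominus$ from $v_2$ backward) that grows arcs on demand, defines a first collision time $T$, and proves $T>m$ whp via exactly the case analysis of forward/backward collisions you sketch. One point worth tightening is that the backward-explored region has size $O(k^m)$ only in expectation, not deterministically, so your ``union bound over the $O(1)$ potential collisions'' is not literally valid; the paper handles this by also intersecting with the event that each backward layer has at most $(\log n)^m$ vertices, which holds whp by Markov's inequality.
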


\begin{proof}
    We construct an incremental sequence of random digraphs, denoted by
    \( (\randdfaidx{t})_{t \ge 0} \), through a signal spreading process.
    Let \(\randdfaidx{0}\) be a digraph of vertex set \([n]\) that has no arcs.
    Without loss of generality, let \(v_1 = 1\) and \(v_2 = 2\).
    At time \(0\), put a \(\oplus\) signal at \(v_1\) and put a \(\ominus\) signal at \(v_2\). 
    
    If a \(\oplus\) signal reaches a vertex \(v\) at time \(t\), then at time
    \(t+1/3\) the vertex \(v\) grows \(k\) out-arcs labeled \(1,\ldots,k\)
    from itself and to \(k\) endpoints chosen independently and uar from all the \(n\)
    vertices. Then the \(\oplus\) signal splits into \(k\) \(\oplus\) signals
    and each of them picks a different newly-grown out-arc and travels
    along the arc's direction to reach its endpoint at time \(t+1\). 

    If a \(\ominus\) signal reaches a vertex \(v\) at time \(t\), then at time
    \(t+2/3\) the vertex \(v\) grows a random number \(X\) in-arcs from itself
    to \(X\) random vertices as follows: Let \( (X_{i,j})_{i \in [n],j \in
    [k]}\) be i.i.d.\ Bernoulli \(1/n\) random variables.  If \(X_{i,j} = 1\),
    then \(v\) grows an in-arc from itself to vertex \(i\) with label \(j\).
    Thus in total \(X \equiv \sum_{i\in[n],j\in[k]}X_{i,j}\) in-arcs are grown
    from \(v\).  Then the \(\ominus\) signal splits into \(X\) \(\ominus\) signals and
    each of them picks a different newly-grown in-arc and travels against the
    arc's direction to reach its starting vertex at time \(t+1\). If \(X=0\),
    then the \(\ominus\) signal vanishes.

    Let \(\randdfaidx{t}\) be the digraph generated in the above process at time \(t\).  Let
    \(\cYone_{t}\) and \(\cYtwo_{t}\) be the sets of vertices that are visited by \(\oplus\) and
    \(\ominus\) signals at time \(t\) respectively.  Let \(\cYone_{\le t}\) and \(\cYtwo_{\le t}\)
    be the sets of vertices that have been visited by \(\oplus\) and \(\ominus\) signals before time
    \(t+1\) respectively.  
    At time
    \(t\), if a signal visits a vertex in \([\cYone_{\le t-1} \cup \cYtwo_{\le t-1}]\) or if two signals visit the same vertex,
    then we say a \emph{collision} happens.  Let \(T\) be the first time when a collision happens.

    Table 1 lists the types of events that make a collision happen.  Three of them need special
    attention for reasons to be clear soon.  First, if multiple \(\ominus\) signals visit the same vertex \(v\), then multiple
    arcs with the same label and \(v\) as the starting point may grow. If this happens we pick an
    arbitrary arc among them and call the others \emph{duplicate}.  Second, a \(\oplus\) signal may
    visit a vertex in \(\cYtwo_{\le T-1}\) through a newly-grown out-arc.  Finally, a \(\ominus\)
    signal may visit a vertex in \(\cYone_{\le T-1}\) through a newly-grown in-arc. We also call the
    newly-grown arcs being passed by in these two cases \emph{biased}.

    \newcommand{\drarrow}{\RightDashedArrow[densely dashed]{}}
    \newcommand{\dlarrow}{\LeftDashedArrow[densely dashed]{}}
    
\begin{table}[ht!]
    \centering
    {
        \tabulinesep=0.6mm
        \begin{tabu}{  c | c | c || c | c | c | c }
            \hline
            \multicolumn{3}{c ||}{Signals visit the same vertex}
            &
            \multicolumn{2}{c | }{Signals visit \(\cYone_{\le t-1}\)} 
            &
            \multicolumn{2}{c  }{Signals visit \(\cYtwo_{\le t-1}\)} 
            \\ \hline
            \(\oplus \drarrow \Vertex \dlarrow \oplus\) 
            & 
            \(\ominus \drarrow \Vertex \dlarrow \oplus\) 
            & 
            \cellcolor{black!5}
            \(\ominus \drarrow \Vertex \dlarrow \ominus\) 
            & 
            \(\oplus \drarrow \Outclaw\)
            & 
            \cellcolor{black!5}
            \(\ominus \drarrow \Outclaw\)
            & 
            \cellcolor{black!5}
            \(\oplus \drarrow \Inclaw\)
            & 
            \(\ominus \drarrow \Inclaw\)
            \\ 
            \hline
        \end{tabu}
    }
    \caption[Events that lead to a collision]{Events that lead to a collision. Three special types of
    events are marked.}
    \label{table:name}
\end{table}

    We construct a random \(k\)-out graph \(\randdfap\) as follows:
    First remove all duplicate and all biased arcs in \(\randdfaidx{T}\).
    Then for each pair \( (v,i) \in [n]\times[k]\), if vertex \(v\) does not
    have an out-arc labeled \(i\), then add such an out-arc with its endpoint
    chosen uar from \([n] \setminus \cYtwo_{\le T-1} \). Denote the result
    digraph by \(\randdfap\).

    The seemingly complicated \(\randdfap\) is nothing but \(\randdfa\) in disguise.  In
    \(\randdfa\), the endpoints of the arcs are chosen uar and simultaneously.
    In \(\randdfap\), the
    endpoints of the arcs
    are still chosen uar but in several steps.  First we mark the arcs whose end (start) vertices
    are at distance \(t\) to \(v_1\) (from \(v_2\)) for \(t =1,\ldots, T\).  To have
    \(\randdfap \eql \randdfa\), obviously duplicate arcs must be
    removed.  The biased arcs also cause trouble as their endpoints are chosen non-uniformly. For example, if
    at time \(T\) a \(\oplus\) signal visits a vertex in \(\cYtwo_{\le T-1}\), then an in-arc is
    added to a vertex whose in-arcs have already been decided by time \(T-1\).  Thus biased
    arcs must also be removed.  Finally, we add arcs that are still missing in \(\randdfap\) and
    choose their
    endpoints uar from \([n] \setminus \cYtwo_{\le T-1}\), i.e., from these vertices whose in-arcs
    have not yet been marked. Thus we have \(\randdfap \eql \randdfa\).  
    Let \(\Yone_t\) and \(\Ytwo_t\) be the number of vertices in \(\randdfap\) at distance \(t\) from \(v_1\) and to \(v_2\) respectively.
    Then
    \[
        (\Yone_t, \Ytwo_t)_{t=0}^{m}
        \eql
        (|\speconedist{t}|, |\spectwodist{t}|)_{t=0}^{m}.
    \]

    A \(\oplus\) signal always splits into \(k\) \(\oplus\) signals after it
    arrives at a vertex. Thus at a non-negative integer time \(t\) there are in
    total \(k^t\) \(\oplus\) signals.  On the other hand, the number of
    \(\ominus\) signals at time \(t\), denoted by \(Y_t\), is random.  Each
    time a \(\ominus\) signal splits, it splits into \(\Bin(kn,1/n)\) signals.
    Because the splits are mutually independent, \( (Y_t)_{t \ge 0}\) has the
    same distribution as \( (S_t)_{t \ge } 0\), the Galton-Watson process with
    offspring distribution \(\Bin(kn, 1/n)\).

    Assume that \(T > m\). Then the part of \(\randdfap\) within distance \(m\)
    from \(v_1\) or to \(v_2\) is determined by \(\randdfaidx{m}\).
    Thus for \(t \le m\), in \(\randdfap\) a vertex is at distance \(t\) from \(v_1\) if and
    only if  it has
    a \(\oplus\) signal at time \(t\) and a vertex is at distance \(t\) to
    \(v_2\) if and only if it has a \(\ominus\) signal at time time \(t\).
    This implies that
    \(
        \left(k^{t}, {Y}_{t} \right)_{t=0}^{m} 
        =
        \left(\Yone_t, \Ytwo_t \right)_{t=0}^{m}
    \).
    Thus to finish the proof, it suffices to show the following lemma:
    \begin{lemma}
        \label{lem:typical:dist:signal}
        For all fixed integers \(m \ge 1\), whp \(T > m\).
    \end{lemma}
    The intuition is that since \(m\) is fixed, 
    for \(t < m\), most likely \(|\cYone_{\le t} \cup \cYtwo_{\le t}|\)
    is small. Thus it is unlikely that a collision happens at time \(t+1\).
    See the end of this subsection for a detailed proof.
\end{proof}

\begin{corollary}
    \label{cor:typical:dist:spectrum}
    Let \(\omega_n \to \infty\) be an arbitrary sequence.
    Let \(M, \delta, \varepsilon\) be three arbitrary positive numbers. 
    Let \(\psi_n \equiv \floor{(1+\varepsilon)\log_k n}\). 
    Let
    \[
        A_n(M,m) 
        \equiv 
        \left[M \le |\speconedistm| \right] 
        \cap
        \left[M \le |\spectwodist{m}| \right] 
        \cap
        \left[|\spectwodistm| \le \omega_n \right]
        .
    \]
    Then there exists
    \(m \ge 1\) such
    that
    \[
        \limsup_{n \to \infty} \p{A_{n}^c(M,m) \cap [\psi_n < H_n < \infty]} < \delta.
    \]
\end{corollary}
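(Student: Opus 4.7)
The plan is to use the branching-process coupling of Proposition~\ref{prop:branching:approx} to rewrite the three requirements defining $A_n(M,m)$ in terms of a Galton--Watson process, and then to pick $m$ large using the fact that the limiting $\Poi(k)$ offspring distribution is supercritical.

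I will fix $m$ (to be chosen later, depending only on $M$ and $\delta$) and apply Proposition~\ref{prop:branching:approx}. This supplies a coupling with an event $\mathcal{C}_n$ of probability $1-o(1)$ on which $|\speconedist{t}| = k^t$ and $|\spectwodist{t}| = Y_t$ for $0 \le t \le m$, where $(Y_t)_{t \le m}$ are the first $m+1$ generations of a Galton--Watson process with $\Bin(kn,1/n)$ offspring. Write $B_n \equiv [\psi_n < H_n < \infty]$. The key observation is that on $B_n$ one has $|\spectwodist{m}| \ge 1$: any shortest $v_1 \to v_2$ path has length $H_n > \psi_n > m$ and hence contains a vertex at back-distance exactly $m$ from $v_2$.

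Decomposing $A_n^c(M,m)$ into its three failure events and applying the coupling on $\mathcal{C}_n$, a union bound yields
\[
\p{A_n^c(M,m) \cap B_n}
\le 3\p{\mathcal{C}_n^c} + \mathbf{1}\{k^m < M\} + \p{1 \le Y_m < M} + \p{Y_0 + Y_1 + \cdots + Y_m > \omega_n}.
\]
I bound each term as $n \to \infty$. The coupling term is $o(1)$ by the proposition. The deterministic indicator vanishes once $k^m \ge M$. Since $\e{Y_0 + \cdots + Y_m} = \sum_{t=0}^{m} k^t$ is a constant in $n$ and $\omega_n \to \infty$, the fourth term is $o(1)$ by Markov. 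For the third term, since $\Bin(kn,1/n) \Rightarrow \Poi(k)$, the finite-dimensional vector $(Y_t)_{t \le m}$ converges in distribution to $(S_t)_{t \le m}$, the corresponding generations of a Galton--Watson process with $\Poi(k)$ offspring; hence $\limsup_{n \to \infty} \p{1 \le Y_m < M} = \p{1 \le S_m < M}$.

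To make this last quantity small I take $m$ large. Because $k \ge 2$, the $\Poi(k)$ process is supercritical with survival probability $\nuk$; conditional on non-extinction, $S_m \to \infty$ almost surely, so $\p{[1 \le S_m < M] \cap [S_\infty > 0]} \to 0$; and $\p{[S_m \ge 1] \cap [S_\infty = 0]} = \muk - \p{S_m = 0} \to 0$ since $\p{S_m = 0} \uparrow \muk$. Choosing $m$ so that $\p{1 \le S_m < M} < \delta$ and $k^m \ge M$ simultaneously completes the argument. The most delicate step is the opening observation that $B_n$ forces $|\spectwodist{m}| \ge 1$; without it, the middle failure $[|\spectwodist{m}| < M]$ would absorb the whole extinction mass $\muk > 0$ and could not be made arbitrarily small by the choice of $m$.
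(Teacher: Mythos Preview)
Your proof is correct and follows the same overall strategy as the paper: apply the coupling of Proposition~\ref{prop:branching:approx}, use $B_n$ to force $|\spectwodist{m}|\ge 1$, and then bound the three failure modes of $A_n(M,m)$ by a union bound plus a choice of large $m$.

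There are two small but genuine differences worth noting. First, you handle the $n$-dependence of the offspring law explicitly: you pass from the $\Bin(kn,1/n)$ process $(Y_t)$ to the $\Poi(k)$ process $(S_t)$ via weak convergence before sending $m\to\infty$. The paper instead takes $m\to\infty$ directly in $\p{1\le Y_m<M}$ as though the law of $Y$ did not depend on $n$, which leaves the uniformity of the choice of $m$ in $n$ implicit; your ordering of limits is cleaner. Second, to show $\p{1\le S_m<M}\to 0$ you use only the elementary dichotomy that a supercritical Galton--Watson process either dies out or tends to infinity, whereas the paper invokes the Kesten--Stigum theorem (via $Y_m/k^m\to Y_\infty$) to reach the same conclusion. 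Your route is lighter, since no $L^1$ martingale convergence is needed here.
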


\begin{proof}
    Let \(\left(k^{t}, {Y}_{t} \right)_{t=0}^{m}\) be the coupling of
    \( (|\speconedist{t}|, |\spectwodist{t}|)_{t=0}^{m} \) constructed in
    Proposition \ref{prop:branching:approx}.
    Thus \( (Y_t)_{t \ge 0}\) is a Galton-Watson process with \(\Bin(kn, 1/n)\)
    offspring distribution, i.e., \(Y_0 = 1\) and \(Y_t = \sum_{i =
    1}^{Y_{t-1}} X_{t,i}\) for \(t \ge 1\), where \(X_{t,i}\)'s are i.i.d.\ \(\Bin(kn,1/n)\).
    Since \(\e X_{1,1} = k > 1\), the survival probability of this process is a
    constant \(\eta > 0\) (see \cite[thm.~3.1]{Van2014randomV1}). For the same
    reason, \(Y_{t}/k^{t} \to Y_{\infty}\) almost surely for some random
    variable \(Y_{\infty}\) (see \cite[thm.~3.9]{Van2014randomV1}).  Since
    \(\E{ X_{1,1}^2} < \infty\), by the Kesten-Stigum Theorem
    \cite[thm.~3.10]{Van2014randomV1}, \(\p{Y_\infty > 0} = \eta\). Thus by the
    Bounded Convergence Theorem~\cite[thm. 1.5.3]{Durrett2010probability}, 
    \[
        \lim_{m \to \infty} \p{Y_m > M} 
        =
        \lim_{m \to \infty} \p{\frac{Y_m}{k^m} > \frac{M}{k^m}} 
        = \p{Y_{\infty} > 0} = \eta.
    \]
    For the same reason \(\p{Y_{m} \ge 1} \to \eta\) as \(m \to \infty\).
    Thus
    \[
        \lim_{m \to \infty} 
        \p{1 \le Y_m < M}
        =
        \lim_{m \to \infty} 
        \left( 
        \p{Y_m \ge 1} - \p{Y_m \ge M}
         \right)
        = 
        0
        .
    \]
    Thus we can choose \(m\) large enough such that
    \( \p{1 \le Y_m < M} < \delta/2 \)
    and that \(k^m \ge M\).

    Recall that \(B_n \equiv [\psi_n < H_n < \infty]\).  When \(n\) is large
    enough, \(\psi_n > m\).  Thus \(B_n\) implies that \(|\speconedist{m}| \ge
    1\).
    Define the event 
    \[
        C_n 
        \equiv 
        \left[
            \left(k^{t}, {Y}_{t} \right)_{t=0}^{m} 
            =
            \left(
            |\speconedist{t}|, |\spectwodist{t}|
            \right)_{t=0}^{m} 
        \right]
    .
    \]
    By Proposition \ref{prop:branching:approx}, \(\p{C_n^c} = o(1)\) as \(n \to
    \infty\).
    Therefore
    \begin{align*}
        \p{A_{n}(M,m)^c \cap B_n}
        &
        \le
        \p{C_n^c}
        +
        \p{A_{n}(M,m)^c \cap C_n \cap B_n}
        \\
        &
        \le 
        o(1) 
        + 
        \p{
            [k^m < M] 
            \cup
            [1 \le Y_m <M]
            \cup
            \left[
                \omega_n < \sum_{t=0}^m Y_{t} 
            \right]
        }
        \\
        &
        \le 
        o(1) 
        + \p{k^m < M} 
        + \p{1 \le Y_{m} < M}
        + \p{\omega_n < \sum_{t=0}^m Y_{t}}
        \\
        &
        =
        o(1)
        + 0
        + \delta/2
        + o(1)
        ,
    \end{align*}
    where the last equality is due to our choice of \(m\) and that 
    \(
    \E{\sum_{t=0}^m Y_{t}} = \sum_{t=0}^{m} k^t = O(1).
    \)
\end{proof}

\begin{proof}[Proof of Lemma~\ref{lem:typical:dist:signal}]
    Recall that \(\cYone_t\) and \(\cYtwo_t\) are the sets of vertices that are reached
    at time \(t\) by a \(\oplus\) signal or \(\ominus\) signal respectively. 
    Let \(\cM_{m-1} = \cup_{t=0}^{m-1} [\cYone_t \cup \cYtwo_t]\). 
    Define event \(A_m \equiv \cap_{i \in [4]} E_{m,i}\) where \(E_{m,i}\)'s
    are defined as follows:
    \begin{itemize}
        \item \(E_{m,1}\) 
            --- The out-arcs that grow from vertices in
            \(\cYone_{m-1}\) all end at different vertices in \([n] \setminus
            \cM_{m-1}\). 
            Thus at time \(m\) all \(\oplus\) signals visit
            different vertices and these vertices have never been visited by
            signals before.
        \item \(E_{m,2}\) 
            --- There are no in-arcs that grow from vertices in
            \(\cYtwo_{m-1}\) that have starting vertices in \(\cM_{m-1} \cup
            \cYone_{m}\). 
            Thus at time \(m\) all \(\ominus\) signals visit vertices that have
            never been visited by signals before and that are not reached by
            \(\oplus\) signals at time \(m\).
        \item \(E_{m,3}\) 
            --- There are no two in-arcs that grow from vertices in
            \(\cYtwo_{m-1}\) that have the same starting vertex.  Thus at time \(m\)
            all \(\ominus\) signals reach different vertices.
        \item \(E_{m,4}\) 
            --- \(|\cYtwo_m| \le (\log n)^{m}\).  
    \end{itemize}
    \newcommand{\Amall}{\cap_{t=0}^{m} A_{t}}
    \newcommand{\Ammall}{\cap_{t=0}^{m-1} A_{t}}
    The event \(A_t\) implies that no collision happens at time \(t\).  Thus \(\Amall\)
    implies that no collision has happened by time \(m\), and thus \(T > m\).  We show by
    induction that \(\p{\Amall} = 1-o(1)\).

    Since \(|\cYtwo_{0}|=1\) and there are no arc-growing before time \(0\),
    \(\p{A_0} = 1\), which is the induction basis.  Now assume that \(\p{\Ammall} =
    1-o(1)\).
    Then
    \[
        \p{\Amall} = \p{ A_{m} \, | \, \Ammall} \p{\Ammall} = \p{A_m\,|\,
        \Ammall}(1-o(1)).
    \]
    Thus it suffices to show that
    \[
        \p{A_m^c\,|\,\Ammall}
        =
        \p{[\cup_{i\in[4]} E_{m,i}^c]\,|\,\Ammall}
        \le
        \sum_{i \in[4]} \p{E_{m,i}^c|\Ammall}
        = o(1)
        .
    \]

    The event \(\Ammall\) implies that
    \[
        |\cM_{m-1}| 
        \le
        \sum_{t = 1}^{m-1} |\cYone_{t}|
        + 
        \sum_{t = 1}^{m-1} |\cYtwo_{t}|
        \le 
        \sum_{t = 1}^{m-1} k^{t} 
        +
        \sum_{t = 1}^{m-1} (\log n)^{t}
        = \bigO{\log n}^{m}
        .
    \]
    For \(E_{m,1}\) to happen, the \(k^{m}\) arcs that grow out of \(\cYone_{m-1}\)
    must end at different vertices in \([n] \setminus \cM_{m-1}\). Thus
    \begin{align*}
        \p{E_{m,1}|\Ammall} 
        =
        \prod_{0 \le i < k^m} 
        \left[ 
            \frac{n - |\cM_{m-1}| - i}{n} 
        \right]
        \ge 
        \left[ 
            1 
            - 
            \frac{\bigO{\log n}^m}{n} 
        \right]^{k^m}
        = 1 - o(1)
        .
    \end{align*}

    For \(E_{m,2}\) to happen, the vertices in \(\cYtwo_{m-1}\) cannot grow in-arcs
    that have starting vertex in in \(\cM_{m-1} \cup \cYone_{m}\).
    \(\Ammall\) implies that \(|\cYtwo_{m-1}| \le (\log n)^{m-1}\).
    Since deterministically \(|\cYone_{m}| = k^m\), \(|\cM_{m-1} \cup
    \cYone_{m}| =\bigO{\log n}^{m}\).
    Thus the number of in-arcs that need to not grow at time \(m-1/3\) to make
    sure that \(E_{m,2}\) happens is at most
    \[
        k 
        |\cYtwo_{m-1}| 
        |\cM_{m-1} \cup \cYone_{m}| 
        =
        \bigO{\log n}^{2m}
        .
    \]
    Since an in-arc does not grow with probability \(1-1/n\),
    \begin{align*}
        \p{E_{m,2}\,|\,\Ammall}
        \ge
        \left(  
            1
            -
            \frac{1}{n}
        \right)^{\bigO{\log n}^{2m}}
        = 
        1 - o(1)
        .
    \end{align*}

    Let \(X_v\) be the number of in-arcs that grow from \(\cYtwo_{m-1}\) and
    that have starting vertex \(v\).  Conditioned on \(\cYtwo_{m-1}\), \(X_v
    \eql \Bin(k|\cYtwo_{m-1}|, 1/n)\).  Since \(\Ammall\) implies
    \(|\cYtwo_{m-1}| \le (\log n)^{m-1}\), 
    \begin{align*}
        \p{X_v \le 1\,|\,\Ammall}
        &
        \ge
        \p{\Bin\left( k (\log n)^{m-1}, \frac{1}{n} \right) \le 1}
        \\
        &
        =
        \left( 1-\frac{1}{n} \right)^{k(\log n)^{m-1}}
        +
        k(\log n)^{m-1}
        \frac{1}{n}
        \left( 1-\frac{1}{n} \right)^{k(\log n)^{m-1}-1}
        \\
        &
        =
        1 - \bigO{\frac{(\log n)^{2(m-1)}}{n^2}}
        .
    \end{align*}
    Since for two different vertices \(u\) and \(v\), \(X_u\) and \(X_v\)
    depend on disjoint set of arcs, \( (X_{u})_{u \in [n]}\) are mutually
    independent. Thus
    \begin{align*}
        \p{E_{m,3}|\Ammall} 
        &
        =
        \p{\cap_{v \in [n]} [X_v \le 1]\,|\,\Ammall} 
        \\
        &
        \ge
        \left(  
        1 - 
        \bigO{\frac{(\log n)^{2(m-1)}}{n^2}}
        \right)^{n}
        = 
        1 - 
        o(1)
        .
    \end{align*}

    Since \( (|\cYtwo_{t}|)_{t \ge 1} \) is a Galton-Watson process with a \(\Bin(kn,
    1/n)\) offspring distribution, \(\e |\cYtwo_m| = k^m\). Thus
    \(\p{|\cYtwo_m| > (\log
    n)^m} = o(1)\). Therefore
    \[
        \p{E_{m,4}^c|\Ammall} 
        \equiv
        \p{|\cYtwo_m| > (\log n)^{m}|\Ammall} 
        \le 
        \frac
        { \p{|\cYtwo_m| > (\log n)^{m}} }
        {\p{\Ammall}} =
        o(1)
        ,
    \]
    where the last equality is due to the induction assumption that \(\p{\Ammall} = 1-o(1)\).
\end{proof}

\subsection{Path counting}

\label{sec:typical:distance:counting}

\newcommand{\asize}{|\cA|}
\newcommand{\bsize}{|\cB|}
\newcommand{\nlab}{N_{\ell}}

For three disjoint sets of vertices \(\cA, \cB, \cC \subseteq [n]\), let
\(\nlab\) denote the number of paths of length \(\ell\) that
start from \(\cA\) and end at \(\cB\), and that have all internal
vertices in \(\cC\). 
In the next subsection, we use the second moment method to lower bound
\(\p{\nlab \ge 1}\), which requires estimates of \(\E{\nlab}\) and
\(\V{\nlab}\). 
The following lemma does so by using the path counting technique
\cite[chap.~3.5]{Van2014randomV2}.
\begin{proposition}
    \label{prop:typical:dist:path}
    Let \(\omega\), \(\ell\) and \(M\) be three positive integers, possibly depending on \(n\).  Let \(\cA, \cB, \cC
    \subseteq [n]\) be disjoint sets of vertices with \(|\cA| = |\cB| = M \ge 1\) and
    \(|\cC| \ge n-\omega\). 
    There exist constants \(C_1\) and \(C_2\) such that
    \[
        \e \nlab 
        \ge
        \frac{k^{\ell}M^{2}}{n} \left( 1 - {\frac{(\omega+\ell)\ell}{n}} \right)
        ,
        \numberthis
        \label{eq:n:expc}
    \]
    and
    \[
        \V{\nlab}
        \le \e \nlab 
        + C_1 {\frac{k^{2\ell} M^3}{n^2}}
        + C_2 {\frac{k^{2\ell} M^4\ell^4}{n^3} 
        }
        .
        \numberthis
        \label{eq:n:var}
    \]
\end{proposition}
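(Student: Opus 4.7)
The plan is to compute $\e \nlab$ directly and then to bound $\V{\nlab}$ by a second-moment argument in which pairs of paths are classified by their shared structure.

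For the expectation I will count labeled paths: such a path is determined by a sequence $v_0 \in \cA$, $v_1, \ldots, v_{\ell-1} \in \cC$ (distinct), $v_\ell \in \cB$, together with arc labels in $[k]^\ell$, and since $\cA, \cB, \cC$ are disjoint, distinctness of the internal vertices is the only non-trivial constraint. Each such labeled path exists with probability $(1/n)^\ell$, so $\e \nlab = M^2 k^\ell (|\cC|)_{\ell-1}/n^\ell$. Using $|\cC| \ge n - \omega$ together with the Weierstrass inequality $\prod_{i=0}^{\ell-2}(1 - (\omega+i)/n) \ge 1 - \sum_{i=0}^{\ell-2}(\omega+i)/n \ge 1 - (\omega + \ell)\ell/n$ will yield \eqref{eq:n:expc}.

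For the variance I will write $\V{\nlab} = \sum_{P_1, P_2} \operatorname{Cov}(\ind{P_1}, \ind{P_2})$ and exploit the fact that the endpoints at distinct (start vertex, label) pairs are independent uniform. The covariance therefore vanishes when $P_1$ and $P_2$ use disjoint (start, label) sets; it equals $(1/n)^{2\ell-r} - (1/n)^{2\ell}$ when they compatibly share $r \ge 1$ arcs (same (start, label) with matching endpoint); and it is non-positive if any shared (start, label) has conflicting endpoints. The diagonal terms contribute at most $\e \nlab$, leaving
\[
    \V{\nlab} \le \e \nlab + \sum_{P_1 \ne P_2,\ r \ge 1 \text{ compatibly shared}} n^{-(2\ell - r)}.
\]
I will then classify the surviving pairs by (i) which of the four endpoints $v_0, u_0 \in \cA$ and $v_\ell, u_\ell \in \cB$ coincide and (ii) the shared-arc structure, which is a disjoint union of sub-paths in each of $P_1, P_2$. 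The dominant $M^3$-contribution comes from pairs with exactly one coinciding endpoint and one shared adjacent arc: choosing the shared endpoint ($M$ ways), the adjacent shared vertex ($\le |\cC|$), the shared label ($k$), and independently extending each path ($M |\cC|^{\ell-2} k^{\ell-1}$ each), weighted by $(1/n)^{2\ell-1}$, totals $O(M^3 k^{2\ell}/n^2)$; longer shared initial or final segments are strictly lower order in $1/n$. The dominant $M^4$-contribution comes from pairs with all four endpoints distinct sharing a single internal arc at positions $(p, q) \in [\ell]^2$, yielding $O(\ell^2 M^4 k^{2\ell-1}/n^3)$; multiple or longer internal shared components decrease by further factors of $1/n$, and the bound $O(\ell^4 M^4 k^{2\ell}/n^3)$ generously subsumes every configuration. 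Summing all cases gives \eqref{eq:n:var}.

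The main difficulty will lie in the structural bookkeeping: enumerating overlap patterns (shared sub-paths anchored at one endpoint, at two endpoints, or wholly internal, and possibly comprising several vertex-disjoint components), ensuring each pair of paths is accounted for without double-counting, and verifying that every configuration is dominated by either the $M^3/n^2$ or the $M^4 \ell^4/n^3$ term.
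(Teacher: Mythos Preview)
Your plan is correct and matches the paper's: compute $\e\nlab$ directly, then bound $\V{\nlab}$ by $\e\nlab$ plus a sum over compatible non-arc-disjoint pairs classified by overlap structure. The paper organizes the bookkeeping you flag as the main difficulty by first contracting $\cA,\cB$ to single vertices $v_a,v_b$ and then parameterizing each pair of paths by its ``shape'' $(\vec{x}_{m+1},\vec{s}_m,\vec{t}_m,\vec{o}_{m+1})$---the lengths of the $m{+}1$ maximal shared sub-paths, the lengths of the $m$ unshared sub-paths in each path, and the permutation recording the order in which the shared segments are traversed in the second path---which splits the sum cleanly into $m=1$ (yielding the $C_1 k^{2\ell}M^3/n^2$ term) and $m\ge 2$ (yielding the $C_2 k^{2\ell}M^4\ell^4/n^3$ term), rather than by endpoint coincidence as you propose. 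One small correction to your sketch: longer shared initial or final segments are not ``strictly lower order in $1/n$'' but the same order with an extra factor $k^{-r}$, so they sum geometrically to the same $O(M^3 k^{2\ell}/n^2)$ bound; your conclusion is unaffected.
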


\begin{proof}[Proof of \eqref{eq:n:expc}]
    Note that if \(n \le (\omega + \ell) \ell\), then \eqref{eq:n:expc} is trivially true.
    So we assume that \(n > (\omega + \ell) \ell\).
    We simplify by contracting \(\cA\) and \(\cB\) into to two special vertices
    \(v_a\) and \(v_b\). The vertex \(v_a\) has out-degree \(kM\) and the
    vertex \(v_b\) has probability
    \(M/n\) to be chosen as the endpoint of each arc. Consider an unlabeled path
    of length \(\ell \ge 1\) from \(v_a\) to \(v_b\). There are \(kM\) ways to label
    the first arc. There are \(k^{\ell-1}\) ways to label the other
    arcs.  Recall that \( (x)_y \equiv (x-1)(x-2)\cdots(x-y+1) \).  There are
    \( (|\cC|)_{\ell-1}\) ways to label the internal vertices of the path. The
    probability that a vertex-and-arc labeled path of length \(\ell\) from
    \(v_a\) to \(v_b\) exists is \( (1/n)^{\ell-1} (M/n) \). 
    Thus
    \begin{align*}
        \e \nlab 
        &
        = 
        (kM)k^{\ell-1} (|\cC|)_{\ell-1}
        \left(\frac{1}{n}\right)^{\ell-1} 
        \left( \frac{M}{n} \right)
        \\
        &
        \ge 
        \frac{k^{\ell}M^{2}}{n} \left( 1 - \frac{\omega+\ell}{n}
        \right)^{\ell}
        \ge \frac{k^{\ell}M^{2}}{n} \left( 1 - {\frac{(\omega+\ell)\ell}{n}} \right)
        ,
    \end{align*}
    where the last step is because \( (1-x)^y \ge 1 - xy \) when \(x \ge 0, y \ge 1\).
\end{proof}

\begin{proof}[Proof of \eqref{eq:n:var}]
    \newcommand{\inda}{\indd{\alpha}}
    \newcommand{\indb}{\indd{\beta}}

    \newcommand{\ala}[1]{a_{#1}^{[\alpha]}}
    \newcommand{\alv}[1]{v_{#1}^{[\alpha]}}
    \newcommand{\bea}[1]{a_{#1}^{[\beta]}}
    \newcommand{\bev}[1]{v_{#1}^{[\beta]}}

    Let \(\cL\) be the space of all possible arc-and-vertex labeled paths of length \(\ell\) from
    \(v_a\) to \(v_b\) through \(\cC\). 
    In other words, if \(\alpha \in \cC\), then
    \[ 
        \alpha 
        = 
        \left(
        \alv{0}\equiv v_a, \, \ala{0}, \,
        \alv{1}, \, \ala{1}, \,
        \ldots, \,
        \alv{\ell-1},\,  \ala{\ell-1}, \,
        \alv{\ell} \equiv v_{b}
        \right),
    \]
    where 
    \(\ala{0},\ldots, \ala{\ell-1}\) are arc labels and \(\alv{1},\ldots,
    \alv{\ell-1}\) are different vertex labels in \(\cC\).
    For \(\alpha \in \cL\), let
    \(\indd{\alpha}\) be the indicator that \(\alpha\) appears.  
    Given two paths \(\alpha, \beta \in \cL\), call them \emph{arc-disjoint} if
    there does not exist an \(i\) such that \(\alv{i} = \bev{i}\) and \(\ala{i}
    = \bea{i}\).
    If
    two paths \(\alpha\) and \(\beta\) are arc-disjoint, then \(\inda\) and
    \(\indb\) are independent, since they depend on the endpoints of two
    disjoint sets of arcs.
    Let \(\alpha \sim \beta\) denote
    that \(\alpha\) and \(\beta\) are not arc-disjoint and that \(\alpha\)
    and \(\beta\) can both appear simultaneously. Then
    \begin{align*}
        \V{\nlab} 
        & = \sum_{\alpha,\beta \in \cL}\left(\E{\indd{\alpha} \indd{\beta}}
        - \E{\indd{\alpha}} \E{\indd{\beta}}  \right) \\
        & \le \sum_{\alpha,\beta \in \cL}\ind{\alpha \sim \beta}\left[\E{\indd{\alpha} \indd{\beta}}
        - \E{\indd{\alpha}} \E{\indd{\beta}}  \right] \\
        & \le \e \nlab + \sum_{\alpha,\beta \in \cL}
        \ind{\alpha \sim \beta}
        \ind{\alpha \ne \beta}
        \E{\indd{\alpha} \indd{\beta}} \\
        & \equiv \e \nlab + I
        .
    \end{align*}

    To bound \(I\), we use a technique called path counting.  Consider
    two paths \(\alpha,\beta \in \cL\) with \(\alpha \sim \beta\)
    and \(\alpha \ne \beta\). First
    colour all vertices and arcs in \(\alpha\) and \(\beta\) white.  Then
    colour all vertices and arcs shared by \(\alpha\) and \(\beta\) black.
    After this, \(\alpha\) and \(\beta\) both contain the same number, say
    \(m\), of white paths separated by black paths (possibly a single black
    vertex).  Since both \(\alpha\) and \(\beta\) start and end with black
    paths, each of them contains \(m+1\) black paths.  
    Define:
    \begin{enumerate}
        \item \(\vec{x}_{m+1} = (x_1,\ldots, x_{m+1})\), where \(x_i \ge 0\) denotes the length of the
            \(i\)-th black path in \(\alpha\).
        \item \(\vecs = (s_1, \ldots, s_m)\), where \(s_i > 0\) denotes the
            length of the \(i\)-th
            white path in \(\alpha\).
        \item \(\vect = (t_1, \ldots, t_m)\), where \(t_i > 0\) denotes the
            length of the \(i\)-th white path in \(\beta\).
        \item \(\vec{o}_{m+1} = (o_1, \ldots, o_{m+1})\) records the order in which black
            paths appear in \(\beta\). Note that \(o_1 \equiv 1\),
            \(o_{m+1} \equiv m+1\),
            and \((o_2, \ldots, o_{m})\)
            is a permutation of \(\{2,\ldots,m\}\).
    \end{enumerate}
    Define the shape of \(\alpha\) and \(\beta\) by \(\sh(\alpha,\beta)
    \equiv
    (\vec{x}_{m+1},\vecs,\vect,\vec{o}_{m+1}).\)

    \begin{figure}[ht!]
        \label{fig:path:counting}
        \begin{tikzpicture}[>=stealth',scale=0.6]
            \tikzset{black path/.style={thick, ->}}
            \tikzset{white path/.style={densely dashed,thick, ->}}
            \tikzset{x label/.style={below, align=center}}

            \draw[black path] (0,0) coordinate (astart) {} -- 
            node[x label] {\(x_1\) \\[-3pt] \(o_1=1\)} 
            (2,0);
            \draw[black path] (5,0) -- 
            node[x label] {\(x_2\) \\[-3pt] \(o_2=2\)}
            (8,0);
            \draw[black path] (10,0) -- 
            node[x label] {\(x_3\) \\[-3pt] \(o_3=4\)}
            (14,0) coordinate (aend) {};
            \draw[black path] (16,0) -- 
            node[x label] {\(x_4\) \\[-3pt] \(o_4=3\)}
            (18,0);
            \draw[black path] (20,0) -- 
            node[x label] {\(x_5\) \\[-3pt] \(o_5=5\)}
            (23,0);

            \draw[white path] (2,0) -- node[below] {\(s_1\)} (5,0);
            \draw[white path] (8,0) -- node[below] {\(s_2\)} (10,0);
            \draw[white path] (aend) -- node[below] {\(s_3\)} ++(2,0);
            \draw[white path] (18,0) -- node[below] {\(s_4\)} (20,0);

            \begin{scope}[shift={(0,0.2)}]

            \draw[black path] (0,0) coordinate (bstart) {} -- (2,0);
            \draw[black path] (5,0) -- (8,0);
            \draw[black path] (10,0) -- (14,0) coordinate (bend) {};
            \draw[black path] (16,0) -- (18,0);
            \draw[black path] (20,0) -- (23,0);

            \draw[white path] (2,0) to[out=90,in=180] ++(0.8,0.8) -- 
            node[above] {\(t_1\)}
            ($(5,0) + (-0.8,0.8)$) to[out=0,in=90] (5,0);
            \draw[white path] (8,0) to[out=90,in=180] ++(1.9,1.9)
            -- node[above] {\(t_2\)}
            ($(16,0)+(-1.9,1.9)$) to[out=0, in=90] (16,0);
            \draw[white path] (18,0) to[out=90,in=0] ++(-0.8,0.8) -- 
            node[near end, above] {\(t_3\)}
            ($(10,0) + (0.8,0.8)$) to[out=180,in=90] (10,0);
            \draw[white path] (14,0) to[out=90,in=180] ++(1.4,1.4)
            -- node[above] {\(t_4\)}
            ($(20,0)+(-1.4,1.4)$) to[out=0, in=90] (20,0);

            \node at (-1.5,1.1) {\(\beta\)};

            \end{scope}

            \node[circle,draw=black, inner sep=2pt] at (-0.5,0.1) {\(v_a\)};

            \node[circle,draw=black, inner sep=2pt] at (23.5,0.1) {\(v_b\)};

            \node at (-1.5,-1.1) {\(\alpha\)};


        \end{tikzpicture}
        \caption{A pair of paths and their shape.}
    \end{figure}

    Let \(r\) be the number of arcs shared by \(\alpha\) and \(\beta\), i.e.,
    \(r \equiv \sum_{i=1}^{m+1} x_i\). Since \(\alpha \sim \beta\) and \(\alpha
    \ne \beta\), \(1 \le r < \ell\).  
    Thus there are \(\ell -r\) white arcs in \(\alpha\).  
    Since each white path contains at least one white arc,
    there are at most \(\ell-r\) white paths in \(\alpha\), i.e., \(m \le \ell-r\).  As
    \(\alpha\) and \(\beta\) must differ by at least one arc, \(m \ge 1\). Let
    \(\cS_{m, r}\) denote the set of shapes of two paths in \(\cL\)
    that share \(r\) arcs and each contains \(m\) white paths. Then
    \(I\) can be expressed as a sum over \(r\), \(m\) and \(\cS_{m, r}\)
    by
    \begin{align*}
        I
        =
        \sum_{1 \le r < \ell} 
        \sum_{1 \le m \le \ell-r}
        \sum_{\sigma \in \cS_{m, r}}
        \sum_{\alpha,\beta \in \cL}
        \ind{\sh(\alpha,\beta)=\sigma}
        \E{\indd{\alpha}\indd{\beta}} 
        \equiv
        \sum_{1 \le m < \ell}
        \sum_{1 \le r < \ell-m} 
        \sum_{\sigma \in \cS_{m, r}}
        J_{m,r,\sigma}
        .
    \end{align*}

    Now fix \(m,r\) and a shape \(\sigma = (\vec{x}_{m+1}, \vecs, \vect,
    \vec{o}_{m+1}) \in \cS_{m, r}\).
    Consider arcs in two paths \(\alpha, \beta \in \cL\) with \(\cS(\alpha,\beta) =
    \sigma\).  Call those starting
    from \(v_a\) \(a\)-arcs, those ending at \(v_b\) \(b\)-arcs, and other arcs
    middle-arcs.  Let \(z_a \equiv \ind{x_1 = 0}\) and \(z_b \equiv
    \ind{x_{m+1} = 0}\). In other words, \(z_a\) is the indicator that
    \(\alpha\) and \(\beta\) do not share an \(a\)-arc, and \(z_b\) is
    the indicator that they do not share a \(b\)-arc. Then \(\alpha\) and
    \(\beta\) contain \(1+z_a\) \(a\)-arcs and \(1+z_b\) \(b\)-arcs.
    Since \(\alpha\) and \(\beta\) are both of length \(\ell\) and they share
    \(r\) arcs, they contain \(2\ell - r\) arcs in total. Thus they
    contain \(2\ell-r-(1+z_a)-(1+z_b)=2\ell -r -z_a-z_b-2\) middle-arcs.

    Recall that black paths are shared by \(\alpha\) and \(\beta\).  Since the
    \(i\)-th black path is of length \(x_i\), it contains \(x_i+1\) black
    vertices.  So the number of vertices shared by the two paths is
    \(\sum_{i=1}^{m+1} (x_i+1) = r + m+ 1\).  Therefore in total there are
    \(2(\ell+1) - r - m - 1\) vertices in the two paths, and among them \(2\ell
    -r -m -1\) are internal vertices.

    The above argument shows that, given two unlabeled path of the shape
    \(\sigma\), there are at most \( n^{2\ell-r-m-1}\) ways to choose the
    internal
    vertices. 
    There are at most\( (kM)^{1+z_a} \) ways to label \(a\)-arcs. There
    are \( k^{2 \ell - r- z_a - z_b -2 }\) ways to label middle-arcs. There are
    at most
    \( k^{1+z_b}\) ways to label \(b\)-arcs. Thus
    \begin{align*}
        |\{(\alpha,\beta) \in \cL \times \cL: \sh(\alpha,\beta) = \sigma\}| 
        & \le
        n^{2\ell-r-m-1}
        (kM)^{1+z_a}
        k^{2 \ell - r- z_a - z_b -2 }
        k^{z_b + 1}
        \\
        & = 
        n^{2\ell-r-m-1}
        M^{1+z_a}
        k^{2 \ell - r}
        .
    \end{align*}
    And the probability that a pair of paths with shape \(\sigma\) does appear is
    \[
        \left( \frac{1}{n} \right)^{1+z_a} 
        \left( \frac{1}{n} \right)^{2\ell - r - z_a -z_b - 2} 
        \left( \frac{M}{n} \right)^{1+z_b}
        = 
        \frac{M^{1+z_b}}{n^{2\ell-r}}
        .
        \]
    Together,
    \begin{align*}
        J_{m,r,\sigma}
        &
        \equiv
        \sum_{\alpha,\beta \in \cL}
        \ind{\sh(\alpha,\beta)=\sigma}
        \E{\indd{\alpha}\indd{\beta}} 
        \le
        n^{2\ell-r-m-1}
        M^{1+z_a}
        k^{2 \ell - r}
        \frac{M^{1+z_b}}{n^{2\ell-r}}
        \\
        &
        = 
        \frac{k^{2\ell-r} M^{2+z_a+z_b}}{n^{m+1}}
        \equiv K_{m,r,z_a,z_b}
        \numberthis
        \label{eq:path:a:b}
        .
    \end{align*}

    Let \(\cS_{m,r,z_a,z_b}\) be the set of shapes with parameters
    \(m,r,z_a,z_b\). Then we have \(\cS_{m,r} = \cup_{z_a,z_b \in \{0,1\}}
    \cS_{m,r,z_a,z_b}\), where the sets in the union are disjoint.
    Thus
    \begin{align*}
        I
        &
        =
        \sum_{1 \le m < \ell}
        \sum_{z_a,z_b \in \{0,1\}}
        \sum_{1 \le r < \ell-m} 
        \sum_{\sigma \in \cS_{m, r,z_a,z_b}}
        J_{m,r,\sigma}
        \\
        &
        \le 
        \sum_{1 \le m < \ell}
        \sum_{z_a,z_b \in \{0,1\}}
        \sum_{1 \le r < \ell-m} 
        |\cS_{m, r,z_a,z_b}|
        K_{m,r,z_a,z_b}
        \\
        & 
        =
        \sum_{z_a,z_b \in \{0,1\}}
        \sum_{1 \le r < \ell-m} 
        |\cS_{1, r,z_a,z_b}|
        K_{1,r,z_a,z_b}
        +
        \sum_{2 \le m < \ell}
        \sum_{z_a,z_b \in \{0,1\}}
        \sum_{1 \le r < \ell-m} 
        |\cS_{m, r,z_a,z_b}|
        K_{m,r,z_a,z_b}
        \\
        &
        \equiv 
        I^{[1]} + I^{[\ge 2]}
        .
    \end{align*}
    By counting the choices of \(\vec{x}_{m+1}, \vecs, \vect,
    \vec{o}_{m+1}\), we can upper bound \(|\cS_{m,r,z_a,z_b}|\):
    \begin{lemma}
        If \(m \ge z_a + z_b\), then 
        \begin{equation}
            |\cS_{m,r,z_a,z_b}| 
            =
            (r+1)^{m-z_a-z_b} 
            \binom{\ell-r-1}{m-1}
            \binom{\ell-r-1}{m-1}
            (m-1)!.
            \label{eq:path:shape}
        \end{equation}
        If \(m < z_a + z_b\), then \(|\cS_{m,r,z_a,z_b}| = 0\).
        \label{lem:typical:dist:smrab}
    \end{lemma}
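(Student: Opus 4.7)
The plan is to enumerate the four components of a shape $\sigma = (\vec{x}_{m+1}, \vec{s}_m, \vec{t}_m, \vec{o}_{m+1})$ separately and multiply. First I would dispose of the vacuous case $m < z_a + z_b$: since $z_a + z_b \le 2$ and $m \ge 1$, this can only occur when $m = 1$ and $z_a = z_b = 1$, which forces $x_1 = x_2 = 0$ and hence $r = 0$, contradicting $r \ge 1$ (which holds because $\alpha \sim \beta$ and $\alpha \ne \beta$ together guarantee at least one shared arc). Hence $|\cS_{m,r,z_a,z_b}| = 0$ in that regime, matching the claim.

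For $m \ge z_a + z_b$, I would count each component in turn. The white-length vectors $\vec{s}_m$ and $\vec{t}_m$ are compositions of $\ell - r$ (the total number of white arcs in each path) into $m$ strictly positive parts, contributing a factor of $\binom{\ell - r - 1}{m - 1}$ each by the standard stars-and-bars identity. The ordering vector $\vec{o}_{m+1}$ has $o_1 = 1$ and $o_{m+1} = m+1$ fixed by definition, while $(o_2, \ldots, o_m)$ ranges over all $(m-1)!$ permutations of $\{2, \ldots, m\}$.

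The main step is counting the black-length vector $\vec{x}_{m+1}$. Its entries are nonnegative integers summing to $r$, subject to $x_1 = 0 \iff z_a = 1$ and $x_{m+1} = 0 \iff z_b = 1$. I would fix the $z_a + z_b$ endpoint entries that are forced to be zero and substitute $y_j = x_j - 1$ in the $2 - z_a - z_b$ endpoint entries that are required to be at least $1$; the remaining $m + 1 - z_a - z_b$ variables are then nonnegative integers summing to $r - 2 + z_a + z_b$. Freely choosing any $m - z_a - z_b$ of these from $\{0, 1, \ldots, r\}$ and letting the last be determined by the sum constraint (when feasible) bounds the count of valid $\vec{x}_{m+1}$ by $(r+1)^{m - z_a - z_b}$; equivalently, the exact stars-and-bars value $\binom{r + m - 2}{m - z_a - z_b}$ is bounded by $(r+1)^{m - z_a - z_b}$ through the standard inequality $\binom{a+b}{b} \le (a+1)^b$.

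Multiplying the four factors gives the claimed expression. No step will be a genuine obstacle: the lemma is essentially routine combinatorial enumeration. The only point requiring care is tracking how the zero/positivity constraints on the two endpoint entries $x_1$ and $x_{m+1}$ convert into precisely the exponent $m - z_a - z_b$ appearing in the bound, which is exactly what is needed so that summing $|\cS_{m,r,z_a,z_b}| K_{m,r,z_a,z_b}$ over $m, r, z_a, z_b$ in the variance estimate of Proposition~\ref{prop:typical:dist:path} produces the stated $\V{\nlab}$ bound.
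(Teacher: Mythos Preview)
Your approach is essentially the same as the paper's: count the choices for \(\vec{x}_{m+1}\), \(\vec{s}_m\), \(\vec{t}_m\), and \(\vec{o}_{m+1}\) separately and multiply. For the degenerate case \(m=1\), \(z_a=z_b=1\) you argue directly that \(r=x_1+x_2=0\) contradicts \(r\ge 1\); the paper instead argues geometrically that two paths sharing neither initial nor terminal arc but some interior arc must each contain at least two white segments, forcing \(m\ge 2\). Both are valid.

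One point worth flagging: you correctly observe that the count of admissible \(\vec{x}_{m+1}\) is the stars-and-bars value \(\binom{r+m-2}{m-z_a-z_b}\) and that this is only \emph{bounded} by \((r+1)^{m-z_a-z_b}\). The lemma as stated in the paper has an equals sign, but the paper's own proof also only yields an inequality (it drops the positivity constraints on \(x_1,x_{m+1}\) when \(z_a=0\) or \(z_b=0\) and then quotes \((r+1)^{m-z_a-z_b}\) for a weak-composition count that is really \(\binom{r+m-z_a-z_b}{m-z_a-z_b}\)). Since only the upper bound is ever used in bounding \(I^{[1]}\) and \(I^{[\ge 2]}\), this is harmless, and your treatment is in fact the more careful one.
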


\begin{proof}[Proof of Lemma~\ref{lem:typical:dist:smrab}]
    First consider \(m \ge 2\), which implies that \(m \ge z_a + z_b\).
    When \(z_a = 1\), \(x_1 = 0\). 
    When \(z_b = 1\), \(x_{m+1} = 0\). 
    Thus the number of ways to choose \(\vec{x}_{m+1}\) equals the number of
    ways to choose \(m+1-z_a-z_b \ge 1\) ordered non-negative integers such that they
    sum to \(r\), which is well known to be \( (r+1)^{m-z_a-z_b}\),
    which explains the first factor in \eqref{eq:path:shape}.  Similarly the
    second term and the third term are the numbers of ways to
    choose \(\vecs\) and \(\vect\) respectively. The last term is the number of ways to
    choose \(\vec{o}_{m+1}\) since \(o_2,\ldots,o_m\) is a permutation of
    \(\{2,\ldots,m\}\).

    Now assume \(m = 1\). If \(z_a + z_b \le m = 1\), the above argument still
    works.  If \(z_a + z_b > 1\), then \(z_a = z_b = 1\). In other words, the
    two paths do not share arcs at the beginning and at the end, and they must
    meet at least one internal vertex. So in this shape, there must be at least
    two white sub-paths in each of the two paths, i.e., \(m \ge 2\), which is a
    contradiction. Therefore, \(S_{1,r,1,1} = \emptyset\).
\end{proof}

\begin{lemma} 
    \(I^{[1]} \le 6{{ k^{2\ell}M^3}/{n^2}}\).
    \label{lem:typical:dist:Ilmone}
\end{lemma}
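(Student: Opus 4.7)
The plan is to plug the explicit formula for $|\cS_{1,r,z_a,z_b}|$ from the preceding shape-count lemma into the definition of $I^{[1]}$, together with the bound $J_{m,r,\sigma} \le K_{m,r,z_a,z_b}$ from \eqref{eq:path:a:b}, and then bound the resulting sums over $r$ by geometric series.

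First I would specialize Lemma~\ref{lem:typical:dist:smrab} to $m=1$. Using $\binom{\ell-r-1}{0} = 1$ and $0! = 1$, this gives $|\cS_{1,r,0,0}| = r+1$, $|\cS_{1,r,1,0}| = |\cS_{1,r,0,1}| = 1$, and $|\cS_{1,r,1,1}| = 0$ (since the lemma's second case applies when $m < z_a + z_b$). Combining this with $K_{1,r,z_a,z_b} = k^{2\ell-r} M^{2+z_a+z_b}/n^2$ and summing over $z_a, z_b \in \{0,1\}$ and $1 \le r < \ell - 1$ yields
\[
I^{[1]} \;\le\; \frac{k^{2\ell} M^2}{n^2} \sum_{r \ge 1} \frac{r+1}{k^r} \;+\; \frac{2 k^{2\ell} M^3}{n^2} \sum_{r \ge 1} \frac{1}{k^r}.
\]
For $k \ge 2$ these two sums are bounded above by their $k = 2$ values, namely $\sum_{r \ge 1}(r+1)2^{-r} = 3$ and $\sum_{r \ge 1} 2^{-r} = 1$. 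Using $M \ge 1$ to absorb $M^2$ into $M^3$ at the cost of a factor of $M$ then gives
\[
I^{[1]} \;\le\; \frac{3 k^{2\ell} M^3}{n^2} + \frac{2 k^{2\ell} M^3}{n^2} \;=\; \frac{5 k^{2\ell} M^3}{n^2} \;\le\; \frac{6 k^{2\ell} M^3}{n^2},
\]
as claimed.

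There is no genuine obstacle here: the combinatorial heavy lifting lives in the shape-count lemma that is already established, and everything after that is an elementary geometric-series estimate. The only minor subtlety worth flagging is the vanishing of $|\cS_{1,r,1,1}|$, which kills what would otherwise be an $M^4/n^2$ contribution and is essential to land at $M^3/n^2$ rather than something larger.
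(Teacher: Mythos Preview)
Your proof is correct and follows essentially the same approach as the paper: specialize the shape-count lemma to $m=1$, note that $|\cS_{1,r,1,1}|=0$, and bound the remaining geometric sums at $k=2$. Your arithmetic is in fact slightly tighter than the paper's (you get $3+2=5$ where the paper gets $4+2=6$), but the structure is identical.
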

\begin{proof}[Proof of Lemma~\ref{lem:typical:dist:Ilmone}]
    By \eqref{eq:path:a:b} and the above lemma,
    \begin{align*}
        \sum_{1 \le r < \ell-1} 
        |\cS_{1,r,0,0}|
        \times
        K_{1,r,0,0}
        & 
        =
        \sum_{1 \le r < \ell-1} 
        (r+1)
        \left[\binom{\ell-r-1}{0}  \right]^{2}
        0!
        \frac{k^{2\ell-r} M^{2}}{n^{2}}
        \\
        &
        \le
        \frac{k^{2\ell}M^2}{n^2}
        \sum_{1 \le r} 
        \frac{r+1}{k^{r}}
        \le
        \frac{k^{2\ell}M^2}{n^2}
        \left[ 
        \sum_{1 \le r} 
        \frac{1}{2^{r}}
        +
        \sum_{1 \le r} 
        \frac{r}{2^{r}}
        \right]
        \\
        &
        =
        \frac{k^{2\ell}M^2}{n^2}
        \left(
            1
            + 
            \frac{1}{2}
            +
            \sum_{2 \le r} 
            \frac{r}{2^{r}}
        \right)
        \le
        4 {\frac{k^{2\ell}M^2}{n^2}}
        ,
    \end{align*}
    where the last step is because \(\sum_{2\le r} r/2^r \le
    \int_{1}^{\infty} x/2^x {\mathrm d}x \le 2\).
    Similarly,
    \begin{align*}
        \sum_{1 \le r < \ell-1} 
        |\cS_{1,r,0,1}|
        \times
        K_{1,r,0,1}
        &
        =
        \sum_{1 \le r < \ell-1} 
        |\cS_{1,r,1,0}|
        \times
        K_{1,r, 1,0}
        \\
        & 
        =
        \sum_{1 \le r < \ell-1} 
        (r+1)^{0}
        \left[\binom{\ell-r-1}{0}  \right]^{2}
        0!
        \frac{k^{2\ell-r} M^{3}}{n^{2}}
        \\
        &
        \le
        \frac{ k^{2\ell}M^3}{n^2}
        \sum_{1 \le r} 
        \frac{1}{k^{r}} 
        \\
        &
        \le
        \frac{ k^{2\ell}M^3}{n^2}
        \sum_{1 \le r} 
        \frac{1}{2^{r}} 
        =
        {\frac{ k^{2\ell}M^3}{n^2}}
        .
    \end{align*}
    Also by Lemma~\ref{lem:typical:dist:smrab}, \(\cS_{1, r, 1,1} = \emptyset\).
    Thus
    \begin{align*}
        I^{[1]}
        &
        \equiv
        \sum_{z_a,z_b \in \{0,1\}}
        \sum_{1 \le r < \ell-1} 
        |\cS_{1,r,z_a,z_b}| \times
        K_{1,r, z_a,z_b}
        \\
        &
        \le
        4 {\frac{ k^{2\ell}M^2}{n^2}}
        +
        2 {\frac{ k^{2\ell}M^3}{n^2}}
        +
        0
        \le
        6 {\frac{ k^{2\ell}M^3}{n^2}}
        .
        \tag*{\qedhere}
    \end{align*}
\end{proof}

\begin{lemma}
    \(I^{[\ge 2]} = 4{{\ell^4 k^{2\ell}M^{4}}/{n^{3}}}\).
    \label{lem:typical:dist:Ilmtwo}
\end{lemma}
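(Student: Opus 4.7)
The plan is to substitute the explicit count from Lemma~\ref{lem:typical:dist:smrab} into the triple sum defining $I^{[\ge 2]}$ and reduce everything to a convergent geometric-type sum. First I would apply the elementary bounds $(r+1)^{m-z_a-z_b} \le (r+1)^m$, $\binom{\ell-r-1}{m-1}^2 (m-1)! \le \ell^{2m-2}/(m-1)!$ (using $\binom{a}{b}\le a^b/b!$ twice and cancelling one factor of $(m-1)!$), and $M^{2+z_a+z_b} \le M^4$. Summing the four pairs $(z_a,z_b)\in\{0,1\}^2$ costs only a factor of $4$, and writing $k^{2\ell-r}/n^{m+1} = (k^{2\ell}/n^3)\cdot 1/(k^r n^{m-2})$ makes the target scale $k^{2\ell}M^4/n^3$ explicit.

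After these reductions one arrives at
\[
I^{[\ge 2]} \;\le\; \frac{4\,k^{2\ell} M^4}{n^3}\,\sum_{m\ge 2}\frac{\ell^{2m-2}}{(m-1)!\,n^{m-2}}\sum_{r\ge 1}\frac{(r+1)^m}{k^r}.
\]
To treat the inner sum I would use the standard derivative-of-geometric-series identity $\sum_{r\ge 0} r^m x^r = A_m(x)/(1-x)^{m+1}$, where $A_m$ is the $m$-th Eulerian polynomial with $A_m(1)=m!$. Applied at $x=1/k$ and shifted by one, this yields $\sum_{r\ge 1}(r+1)^m k^{-r} \le C_k\cdot m!\cdot \rho_k^{m}$ with $\rho_k = k/(k-1)$ and $C_k$ depending only on $k$. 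Plugging in and using the crucial cancellation $m!/(m-1)! = m$, the outer sum becomes
\[
\rho_k^{2}\,\ell^{2}\sum_{m\ge 2} m\,\Bigl(\frac{\rho_k \ell^2}{n}\Bigr)^{m-2},
\]
which is $O(\ell^2)$ as soon as $\rho_k \ell^2/n \le 1/2$; this is comfortable in the regime of interest since the proposition is applied with $\ell = O(\log n)$. Combining the pieces gives $I^{[\ge 2]} \le C\,k^{2\ell}M^4\ell^2/n^3 \le C\,k^{2\ell}M^4\ell^4/n^3$, which is exactly the claimed estimate (with $C_2=C$) and fits into the variance bound~\eqref{eq:n:var}.

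The main obstacle is locating a bound on $\sum_{r\ge 1}(r+1)^m k^{-r}$ whose $m!$ growth cancels exactly against the $(m-1)!$ in the denominator: a naive bound such as $(r+1)^m \le \ell^m$ would leave a residue growing like $\ell^m$ in $m$ that no amount of $1/n^{m-2}$ can absorb uniformly. The Eulerian/derivative-of-geometric identity gives exactly the right $m!\rho_k^m$ scaling, and the resulting linear-in-$m$ summand then converges geometrically once $\ell^2/n$ is small, completing the bound.
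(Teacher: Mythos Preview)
Your argument is correct and even yields a slightly sharper estimate (an $\ell^2$ in place of the paper's $\ell^4$), but the route is more elaborate than necessary, and your closing remark about the ``naive'' bound is mistaken.

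The paper does exactly what you dismiss: it uses $(r+1)^{m-z_a-z_b}\le (r+1)^m\le \ell^m$ (valid because $r<\ell$), after which the inner sum over $r$ collapses to the geometric series $\sum_{r\ge 1}k^{-r}\le 1$, and the remaining sum over $m$ is
\[
\sum_{m\ge 2}\frac{\ell^{3m-2}}{(m-1)!\,n^{m-1}}
=\frac{\ell}{n}\sum_{j\ge 1}\frac{(\ell^3/n)^{j}}{j!}
=\frac{\ell}{n}\bigl(e^{\ell^3/n}-1\bigr)
=O\!\left(\frac{\ell^4}{n^2}\right)
\]
whenever $\ell^3/n=o(1)$, which holds comfortably for $\ell=O(\log n)$. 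Your claim that the extra $\ell^m$ ``cannot be absorbed by $1/n^{m-2}$'' overlooks the $(m-1)!$ already present in your own displayed formula: that factorial turns the $m$-sum into an exponential series, and no Eulerian-polynomial identity is required. What your approach does buy is a marginally wider range of validity (it only needs $\ell^2=o(n)$ rather than $\ell^3=o(n)$) and the saving of one factor of $\ell^2$, but neither improvement is used in the application to Proposition~\ref{prop:typical:dist:path}.
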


\begin{proof}[Proof of Lemma~\ref{lem:typical:dist:Ilmtwo}]
    By Lemma~\ref{lem:typical:dist:smrab}, for \(r \in [1,\ell)\),
    \begin{align*}
        &
        \sum_{z_a,z_b \in \{0,1\}}
        |\cS_{m,r,z_a,z_b}|
        \times
        K_{m,r,z_a,z_b}
        \\
        &
        =
        \sum_{z_a,z_b \in \{0,1\}}
        (r+1)^{m-z_a-z_b} 
        \left[\binom{\ell-r-1}{m-1} \right]^2
        (m-1)!
        \frac{k^{2\ell-r} M^{2+z_a+z_b}}{n^{m+1}}
        \\
        &
        \le
        \ell^{m}
        \frac{\ell^{2(m-1)}}{(m-1)!}
        \frac{k^{2\ell-r} }{n^{m+1}}
        \sum_{z_a,z_b \in \{0,1\}}
        M^{2+z_a+z_b}
        \\
        &
        \le
        \frac{\ell^{3m-2}k^{2\ell-r}}{(m-1)!n^{m+1}} 4{M^4}
        .
    \end{align*}
    Thus
    \begin{align*}
        \sum_{1 \le r < \ell-m}
        \sum_{z_a,z_b \in \{0,1\}}
        |\cS_{m,r,z_a,z_b}|
        \times
        K_{m,r,z_a,z_b}
        &
        \le
        \sum_{1 \le r < \ell-m}
        \frac{\ell^{3m-2}k^{2\ell-r}}{(m-1)!n^{m+1}} 4{M^4}
        \\
        &
        \le
        \frac{\ell^{3m-2}k^{2\ell}}{(m-1)!n^{m+1}} 4{M^4}
        \sum_{1 \le r } \frac{1}{k^r}
        \\
        &
        \le
        \frac{\ell^{3m-2}k^{2\ell}}{(m-1)!n^{m+1}} 4{M^4}
        .
    \end{align*}
    Therefore,
    \begin{align*}
        I^{[\ge 2]} 
        &
        \equiv
        \sum_{2 \le m <\ell}
        \sum_{1 \le r < \ell-m}
        \sum_{z_a,z_b \in \{0,1\}}
        |\cS_{m,r,z_a,z_b}|
        \times
        K_{m,r,z_a,z_b}
        \\
        &
        \le
        \sum_{2 \le m}
        \frac{\ell^{3m-2}k^{2\ell}}{(m-1)!n^{m+1}} 4M^4
        \\
        &
        \le
        \frac{\ell k^{2\ell}4M^{4}}{n^{2}}
        \sum_{2 \le m}
        \frac{\ell^{3(m-1)}}{n^{m-1}(m-1)!}
        \\
        &
        \le
        \frac{\ell k^{2\ell}4M^{4}}{n^{2}}
        \left( \exp\left\{\frac{\ell^{3}}{n}\right\}-1 \right)
        \le
        4
        {
            \frac{\ell^4 k^{2\ell}M^{4}}{n^{3}}
        }
        .
        \tag*{\qedhere}
    \end{align*}
\end{proof}

By Lemma~\ref{lem:typical:dist:Ilmone} and Lemma~\ref{lem:typical:dist:Ilmtwo},
    \begin{align*}
        I
        =
        I^{[1]} + I^{[\ge 2]}
        \le
        6
        {
            \frac{ k^{2\ell}M^3}{n^2}
        }
        +
        4
        {
            \frac{\ell^4 k^{2\ell}M^{4}}{n^{3}}
        }
        .
    \end{align*}
    Thus \(\V{N_{\ell}} \le \E{N_{\ell}} + I
        =
        \E{N_{\ell}}
        +
        6
        {
            {k^{2\ell}M^3}/{n^2}
        }
        +
        4
        {
            {\ell^4 k^{2\ell}M^{4}}/{n^{3}}
        }
    \).
\end{proof}

\subsection{Finishing the proof of Theorem~\ref{thm:typical:dist}}

\label{sec:typical:dist:upper}

\begin{proof}[Proof of the upper bound of the typical distance]
    We can assume \(\varepsilon < 1/2\).
    Recall that \(\psi_n \equiv \floor{(1+\varepsilon)\log_k n}\) 
    and that \(B_n = [\psi_n < H_n < \infty] \).  
    As argued at the beginning of this section, to finish the proof of
    Theorem~\ref{thm:typical:dist}, it suffices to show that 
    \(\p{B_n} = o(1)\).

\newcommand\vecS{\vec{\cS}}

    Let \(\omega_n \equiv \psi_n\).  Let \(M, m\) be two positive integers which are picked later.
    Recall that \(\cS_{i}^{+}(v)\) and \(\cS_{i}^{-}(v)\) are the sets of vertices at distance exactly
    \(i\) from or to vertex \(v\) respectively, and that \(\cS_{\le i}^{+}(v)\) and \(\cS_{\le
    i}^{-}(v)\) are the sets of vertices at distance at most \(i\) from 
    or to \(v\) respectively. The following argument shows that by properly
    choosing \(M\) and \(m\), the probability that there exists a path of length
    exactly \(\psi_n - 2m\) from \(\speconedistm\) to \(\spectwodistm\) is at
    least \(1-\delta\) for \(n\) large enough, where \(\delta > 0\) is
    arbitrary and fixed.

    \begin{figure}[ht!]
    \centering
        \begin{tikzpicture}
        \node[anchor=south west,inner sep=0] at (0,0)
        {\includegraphics[scale=0.9]{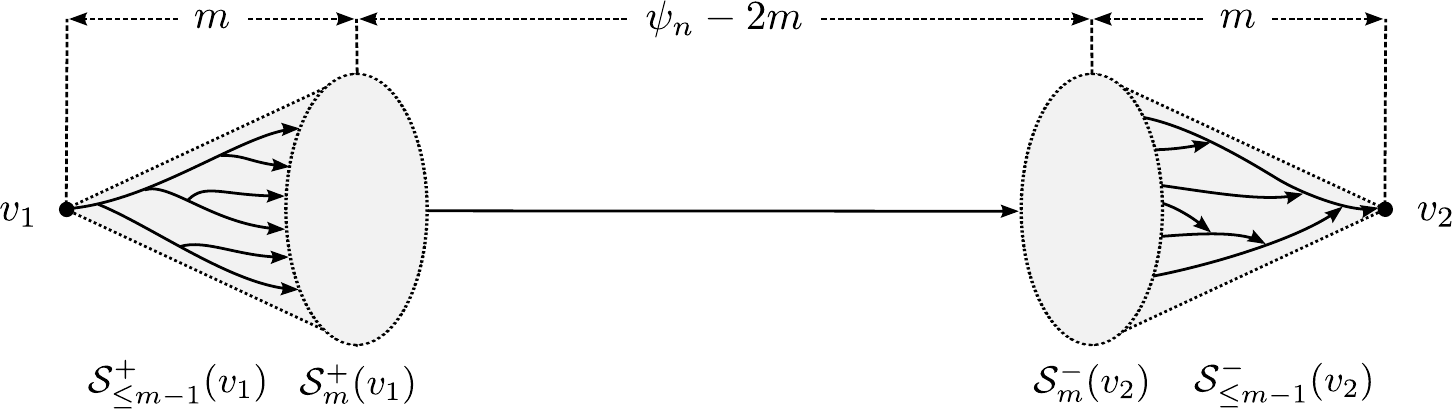}};
        \end{tikzpicture}
        \caption{\(\speconedist{\le m-1}, \speconedistm\), and
    \(\spectwodistm\).}
        \label{fig:distance}
    \end{figure}

    Let the event \(A_n(M,m)\) be defined as in
    Corollary~\ref{cor:typical:dist:spectrum}, i.e.,
    \[
        A_n(M,m) 
        \equiv 
        \left[M \le |\speconedistm| \right] 
        \cap
        \left[M \le |\spectwodist{m}| \right] 
        \cap
        \left[|\spectwodistm| \le \omega_n \right]
        .
    \]
    Since each vertex has out-degree exactly \(k \ge 2\), deterministically,
    \[
        |\speconedist{\le m-1}| \le 1 + k + \cdots + k^{m-1} < k^m, \qquad
        |\speconedist{m}| \le k^m.
    \] 
    Since \(\psi_n > 2m\) for \(n\) large enough,
    \(B_n\) implies \(\speconedist{\le m}\) and \(\spectwodist{\le m}\) are
    disjoint. Thus the event \(A_n(M,m) \cap B_n\) implies
    that \( ( \speconedist{\le m-1}, \speconedistm, \spectwodist{m}, \spectwodist{\le m-1}) \in \cA \),
    where \(\cA\) is a set of quadruples of disjoint sets of vertices defined by
    \begin{align*}
        \cA 
        \equiv 
        \{
            (\cS_1, \cS_2, \cS_3, \cS_4):\,
            &
            v_1 \in \cS_1;
            v_2 \in \cS_4;
            \\
            &
            |\cS_1| < k^{m};
            M \le |\cS_2| \le k^m; 
            M \le |\cS_3|;
            |\cS_3 \cup \cS_4| \le \omega_n
        \}
        .
    \end{align*}
    For \(\vecS = (\cS_1, \cS_2, \cS_3, \cS_4)\in \cA\), define the event
    \[
        A'_{n}(\vecS) 
        \equiv 
        \left[
            \speconedist{\le m-1} = \cS_1
            \right]
        \cap
        \left[
            \speconedistm = \cS_2
            \right]
        \cap
        \left[
            \spectwodist{m} = \cS_3
            \right]
        \cap
        \left[
            \spectwodist{\le m-1} = \cS_4
            \right]
            .
    \]
    Thus \([B_n \cap A_{m}(M,m)] \subseteq \cup_{\vecS \in \cA} [B_n \cap A'_{n}(\vecS)]\) and the events in
    the union are disjoint.

    Now fix a \(\vecS \in \cA\).  Let \(\cA_{\vecS}\) and \(\cB_{\vecS}\) be
    arbitrary subsets of \(\cS_2\) and \(\cS_3\) respectively with
    \(|\cA_{\vecS}| = M\) and \(|\cB_{\vecS}| = M\).  Let \(N_{\vecS}\) be the
    number of paths of length \(\psi_n - 2m\) that start from \(\cA_{\vecS}\)
    and end at \(\cB_{\vecS}\), and that contain internal vertices only in
    \(\cC_{\vecS} \equiv [n] \setminus \cup_{i\in[4]} \cS_i\).  
    Thus there are  
    \(
        \left|
        \cC_{\vecS}
        \right| 
        =
        n-|\cup_{i \in [4]} \cS_i|
        \ge n - (\omega_n + 2k^{m})
    \)
    vertices that can be internal vertices of
    these paths.  
    By \eqref{eq:n:expc} of Proposition~\ref{prop:typical:dist:path}, 
    \begin{align*}
        \e N_{\vecS}
        &
        \ge
        \frac{k^{\psi_n - 2m} M^2}{n} 
        \left( 
        1 
        - 
        \frac
        {(\omega_n + 2k^{m} + \psi_n - 2m)(\psi_n - 2m)}
        {n} 
        \right)
        \\
        &
        \ge
        \frac{k^{(1+\varepsilon)\log_k(n) - 1 - 2m} M^2}{n} 
        \left( 
        1 
        - 
        \frac
        {2 \psi_n^2}
        {n} 
        \right)
        \ge
        \frac{n^{\varepsilon}M^2}{k^{2m+1}} \frac{1}{2}
        ,
    \end{align*}
    for \(n\) large enough.
    By \eqref{eq:n:var} of Proposition~\ref{prop:typical:dist:path}, 
    \begin{align*}
        \V{N_{\vecS}} 
        &
        \le 
        \e N_{\vecS} 
        + C_1 {
            \frac
            {k^{2(\psi_n-2m)}M^3}
            {n^2}
        }
        + C_2 {
            \frac
            {k^{2(\psi_n-2m)}M^4(\psi_n - 2m)^4}
            {n^3}
        }
        \\
        &
        \le
        \e N_{\vecS} 
        + C_1
        {\frac{n^{2(1+\varepsilon)}M^3}{n^2 k^{4m}}}
        + 
        C_2 {\frac{n^{2(1+\varepsilon)}M^4\psi_n^4}{n^3 k^{4m}}}
        \\
        &
        \le
        \e N_{\vecS} 
        + C_1 {
            \frac
            {n^{2\varepsilon} M^3}
            {k^{4m}}
        }
        + C_3 {\frac{M^4}{k^{4m}}} 
        {\frac{ (\log n)^{4}}{n^{1-2\varepsilon}}}
        ,
    \end{align*}
    where \(C_3\) is a constant that does not depend on \(M\) or \(m\).
    Thus
    \begin{align*}
        \p{N_{\vecS} = 0} 
        &
        \le 
            \frac
            {\V{N_{\vecS}}}
            {\left( \e N_{\vecS} \right)^2}
        \le 
            \frac{2 k^{2m+1}}{n^{\varepsilon}M^2}
            + 
            \frac
            {C_1 {n^{2\varepsilon} M^3k^{-4m}}}
            {\left( n^{\varepsilon} M^22^{-1}k^{-2m-1} \right)^2}
            +
            \frac
            {C_3 {{M^4 (\log n)^{4}n^{2\varepsilon-1}} {k^{-4m}}}}
            {\left( n^{\varepsilon} M^22^{-1}k^{-2m-1} \right)^2}
        \\
        &
        \le
            \frac{2 k^{2m+1}}{n^{\varepsilon}M^2}
            + 
            \frac
            {4 k^2 C_1}
            {M}
            +
            \frac
            {4 k^2 C_3 (\log n)^{4}}
            {n}
            .
    \end{align*}
    Later \(m\) is chosen solely depending on \(M\). Thus
    we can pick \(M\) large enough such that for \(n\) large enough,
    \(\p{N_{\vecS} = 0} \le \delta / 2\) for all \(\vecS \in \cA\).

    If \(H_n > \psi_n\), then there cannot exist paths of length \(\psi_n -
    2m\) from \(\speconedistm\) to \(\spectwodist{m}\). Thus \(B_n \cap
    A_n'(\vecS)\) implies that \([N_{\vecS} = 0] \cap A_n'(\vecS)\).
    A crucial observation is that 
    \[\p{N_{\vecS}=0\left|A'_{n}(\vecS)\right.} \le \p{N_{\vecS}=0}.\] This is because \(A'_{n}(\vecS)\)
    implies that arcs starting from vertices in \(\cC_{\vecS}\) cannot choose vertices in
    \(\spectwodist{\le m-1} = \cS_{4}\) as their endpoints. 
    Whereas when we compute \(\p{N_{\vecS}=0}\) without any condition, arcs starting from vertices in
    \(\cC_{\vecS}\) are allowed to choose all vertices as their endpoints.
    Thus some of these arcs are possibly ``wasted'' by choosing their endpoints in \(\cS_4\).
    This increases the probability that \(N_{\vecS} = 0\).
    Thus
    \begin{align*}
        \p{B_n \cap A'_n(\vecS)}
        &
        \le
        \p{[N_{\vecS} = 0] \cap A'_n(\vecS)}
        =
        \p{N_{\vecS} = 0\left|A'_n(\vecS)\right.}\p{A'_n(\vecS)}
        \\
        &
        \le
        \p{N_{\vecS} = 0} 
        \p{A'_n(\vecS)}
        \le
        \frac{\delta}{2}
        \p{A'_n(\vecS)}
        .
    \end{align*}
    Therefore
    \begin{align*}
        \p{B_n \cap A_n(M,m)}
        &
        \le
        \sum_{\vecS \in \cA}
        \p{B_n \cap A'_n(\vecS)}
        \le
        \frac{\delta}{2}
        \sum_{\vecS \in \cA}
        \p{A'_n(\vecS)}
        \\
        &
        \le
        \frac{\delta}{2}
        \p{(\speconedist{\le m-1}, \speconedist{m}, \spectwodist{m}, \spectwodist{\le m-1}) \in \cA}
        \le 
        \frac
        \delta
        2
        .
    \end{align*}
    By Corollary~\ref{cor:typical:dist:spectrum}, we can choose \(m\)
    depending on \(M\) such that for \(n\) large enough,
    \(\p{B_n \cap A_n^c(M,m)} < \delta/2\).
    Thus
    \[
        \limsup_{n \to \infty}
        \p{B_n} 
        =
        \limsup_{n \to \infty}
        \left(
            \p{B_n \cap A_n(M,m)}
            +
            \p{B_n \cap A_n^c(M,m)}
        \right)
        \le \delta.
        \tag*{\qedhere}
    \]
\end{proof}

\fi

\section{Extensions}

\label{sec:extension}

\iflongversion
\else
The typical distance \(H_n\) of \(\randdfa\) is the distance between two
vertices \(v_1\) and \(v_2\) chosen uniformly at random. If \(v_1\) cannot
reach \(v_2\), then \(H_n = \infty\). \citet{Perarnau2014} proved that
conditioned on \(H_n < \infty\), \(H_n/\log_k n \inprob 1\).  We have found an
alternative proof for this result using the path counting technique invented by
\citeauthor{Van2014randomV2} \cite[chap. 3.5]{Van2014randomV2}. The proof can
be found in a longer version of this paper at [\url{http://arxiv.org/abs/1504.06238}].
\fi

\citet{Perarnau2014} also proved that the diameter of the giant component
divided by $\log n$ converges in probability to $1/\log(k) +
1/\log(1/\lambda_k)$. Recall that the longest path outside the giant divided by
$\log n$ converges in probability to $1/\log(1/\lambda_k)$. This seems to be
a strong indication that it might be possible to derive a new proof for the diameter of
the giant.

Recall that \(\randsimple\) is a simple \(k\)-out digraph with \(n\) vertices chosen uniformly at
random from all such digraphs.  Section \ref{sec:simple} proved that if whp \(\randdfa\) has
property {\textbf P}, then whp \(\randsimple\) has property {\textbf P}.  But results like Theorem
\ref{thm:CLL}, the central limit law of the one-in-core, cannot be transferred to \(\randsimple\)
automatically.  We believe that it might be possible to achieve get the same result for
\(\randsimple\) following the line of \citeauthor{Janson2008}'s treatment of the configuration model
\cite{Janson2008}.

A natural generalization of \(\randdfa\) is to have a deterministic out-degree
sequence, as in the directed configuration model, instead of requiring each
vertex to have out-degree exactly \(k\). With some constraints on the out-degree
sequence, most of our results should hold for this generalized model.
Furthermore, we could let each vertex choose its out-degree independently at
random from an out-degree distribution. Again by adding some restrictions on the
out-degree distribution, most of our results should still hold.

The problem of generating a uniform random surjective function with fixed
domain size is an open problem. Theorem~\ref{thm:CLL} implies a simple
algorithm for choosing a \([km] \to [m]\) surjective function uniformly at
random. Let \(n =\ceil{m/\nuk}\). Then we generate a \(\randdfa\).  
If \(\onecoresize = m\), i.e., if the one-in-core in \(\randdfa\) contains \(m\)
vertices, then \(\randdfa[\onecore]\) is equivalent to a uniform random sample
of a \([km] \to [m]\) surjective function. Otherwise we try again until
\(\onecoresize = m\). Theorem~\ref{thm:CLL} shows that \(\p{\onecoresize = m} =
\Theta(1/\sqrt{m})\). Thus the expected number of \(\randdfa\) needed to be
generated is \(\Theta(\sqrt{m})\). Since generating a \(\randdfa\) takes
\(\Theta(m)\) time, the expected running time of the whole algorithm is
\(\Theta(m^{3/2})\).  But we believe that \(\Theta(m)\) should be achievable.

\section*{Acknowledgment}

The authors thank Laura Eslava, Hamed Hatami, Guillem Perarnau, Bruce Reed, Henning
Sulzbach and Yelena Yuditsky for valuable comments on this work, and Denis Thérien for pointing out
the importance of the model.

\bibliographystyle{abbrplainnat}
\bibliography{citation}

\begin{thebibliography}{41}
\providecommand{\natexlab}[1]{#1}
\providecommand{\url}[1]{\texttt{#1}}
\expandafter\ifx\csname urlstyle\endcsname\relax
  \providecommand{\doi}[1]{doi: #1}\else
  \providecommand{\doi}{doi: \begingroup \urlstyle{rm}\Url}\fi

\bibitem[Addario-Berry et~al.(2015)Addario-Berry, Balle, and
  Perarnau]{Perarnau2014}
L.~Addario-Berry, B.~Balle, and G.~Perarnau.
\newblock On the diameter and the stationary distribution of random r-out
  digraphs.
\newblock 2015.

\bibitem[Aldous and Pitman(1994)]{Aldous1994}
D.~J. Aldous and J.~Pitman.
\newblock Brownian bridge asymptotics for random mappings.
\newblock \emph{Random Structures and Algorithms}, 5\penalty0 (4):\penalty0
  487--512, 1994.

\bibitem[Angluin and Chen(2015)]{Angluin2015}
D.~Angluin and D.~Chen.
\newblock Random walks on random uni-regular graphs.
\newblock 2015.

\bibitem[Arratia et~al.(1989)Arratia, Goldstein, and Gordon]{Arratia1989}
R.~Arratia, L.~Goldstein, and L.~Gordon.
\newblock Two moments suffice for poisson approximations: The chen-stein
  method.
\newblock \emph{The Annals of Probability}, 17\penalty0 (1):\penalty0 9--25,
  1989.

\bibitem[Bang-Jensen and Gutin(2009)]{Bang2009}
J.~Bang-Jensen and G.~Z. Gutin.
\newblock \emph{Digraphs: Theory, Algorithms and Applications}.
\newblock Springer Monographs in Mathematics. Springer, 2009.

\bibitem[Barbour et~al.(1993)Barbour, Holst, and Janson]{Barbour1992poisson}
A.~D. Barbour, L.~Holst, and S.~Janson.
\newblock \emph{Poisson Approximation}.
\newblock Oxford University Press, 1993.

\bibitem[Biggs et~al.(1976)Biggs, Lloyd, and Wilson]{Biggs1976}
N.~L. Biggs, E.~K. Lloyd, and R.~J. Wilson.
\newblock \emph{Graph Theory 1736--1936}.
\newblock Oxford University Press, 1976.

\bibitem[Bollob{\'a}s(2001)]{bollobas2001random}
B.~Bollob{\'a}s.
\newblock \emph{Random Graphs}.
\newblock Cambridge Studies in Advanced Mathematics. Cambridge University
  Press, 2nd edition, 2001.

\bibitem[Bollob\'{a}s(1980)]{Bollobas1980311}
B.~Bollob\'{a}s.
\newblock A probabilistic proof of an asymptotic formula for the number of
  labelled regular graphs.
\newblock \emph{European Journal of Combinatorics}, 1\penalty0 (4):\penalty0
  311--316, 1980.

\bibitem[Carayol and Nicaud(2012)]{Carayol12}
A.~Carayol and C.~Nicaud.
\newblock {Distribution of the number of accessible states in a random
  deterministic automaton}.
\newblock In \emph{29th International Symposium on Theoretical Aspects of
  Computer Science}, volume~14 of \emph{Leibniz International Proceedings in
  Informatics},  194--205. Schloss Dagstuhl--Leibniz-Zentrum fuer Informatik,
  2012.

\bibitem[Chen(1975)]{Chen1975}
L.~H.~Y. Chen.
\newblock Poisson approximation for dependent trials.
\newblock \emph{The Annals of Probability}, 3\penalty0 (3):\penalty0 534--545,
  1975.

\bibitem[Cooper and Frieze(2004)]{Cooper2004}
C.~Cooper and A.~Frieze.
\newblock The size of the largest strongly connected component of a random
  digraph with a given degree sequence.
\newblock \emph{Combinatorics, Probability and Computing}, 13:\penalty0
  319--337, 2004.

\bibitem[Durrett(2010)]{Durrett2010probability}
R.~Durrett.
\newblock \emph{Probability: Theory and Examples}.
\newblock Cambridge Series in Statistical and Probabilistic Mathematics.
  Cambridge University Press, 4th edition, 2010.

\bibitem[Durrett(2007)]{durrett2007random}
R.~Durrett.
\newblock \emph{Random Graph Dynamics}.
\newblock Cambridge Series in Statistical and Probabilistic Mathematics.
  Cambridge University Press, 2007.

\bibitem[Erd\H{o}s and R\'{e}nyi(1959)]{Erdos1959random}
P.~Erd\H{o}s and A.~R\'{e}nyi.
\newblock On random graphs i.
\newblock \emph{Publicationes Mathematicae Debrecen}, 6:\penalty0 290--297,
  1959.

\bibitem[Erd\H{o}s and R\'{e}nyi(1960)]{Erdos60onthe}
P.~Erd\H{o}s and A.~R\'{e}nyi.
\newblock On the evolution of random graphs.
\newblock \emph{Publication of the Mathematical Institute of the Hungarian
  Academy of Sciences}, 5:\penalty0 17--61, 1960.

\bibitem[Flajolet and Sedgewick(2009)]{Flajolet2009}
P.~Flajolet and R.~Sedgewick.
\newblock \emph{Analytic Combinatorics}.
\newblock Cambridge University Press, 2009.

\bibitem[Flajolet and Odlyzko(1990)]{Flajolet1990}
P.~Flajolet and A.~Odlyzko.
\newblock Random mapping statistics.
\newblock In \emph{Advances in Cryptology--EUROCRYPT ’89}, volume 434 of
  \emph{Lecture Notes in Computer Science},  329--354. Springer, 1990.

\bibitem[Good(1961)]{Good1961}
I.~J. Good.
\newblock An asymptotic formula for the differences of the powers at zero.
\newblock \emph{The Annals of Mathematical Statistics}, 32\penalty0
  (1):\penalty0 249--256, 1961.

\bibitem[Grusho(1973)]{Grusho1973}
A.~A. Grusho.
\newblock Limit distributions of certain characteristics of random automaton
  graphs.
\newblock \emph{Mathematical Notes of the Academy of Sciences of the USSR},
  14\penalty0 (1):\penalty0 633--637, 1973.

\bibitem[Hoeffding(1963)]{Hoeffding1963}
W.~Hoeffding.
\newblock Probability inequalities for sums of bounded random variables.
\newblock \emph{Journal of the American Statistical Association}, 58\penalty0
  (301):\penalty0 1330, 1963.

\bibitem[Janson et~al.(2011)Janson, \L{}uczak, and Rucinski]{Janson2011random}
S.~Janson, T.~\L{}uczak, and A.~Rucinski.
\newblock \emph{Random Graphs}.
\newblock Wiley Series in Discrete Mathematics and Optimization. John Wiley \&
  Sons, 2011.

\bibitem[Janson(2009)]{janson2009}
S.~Janson.
\newblock The probability that a random multigraph is simple.
\newblock \emph{Combinatorics, probability and computing}, 18\penalty0
  (1--2):\penalty0 205--225, 2009.

\bibitem[Janson(2014)]{janson2014}
S.~Janson.
\newblock The probability that a random multigraph is simple. {II}.
\newblock \emph{Journal of Applied Probability}, 51A:\penalty0 123--137, 12
  2014.

\bibitem[Janson and Luczak(2008)]{Janson2008}
S.~Janson and M.~J. Luczak.
\newblock Asymptotic normality of the k-core in random graphs.
\newblock \emph{The Annals of Applied Probability}, 18\penalty0 (3):\penalty0
  1085--1137, 2008.

\bibitem[Kearns(1998)]{Kearns1998}
M.~Kearns.
\newblock Efficient noise--tolerant learning from statistical queries.
\newblock \emph{Journal of the ACM}, 45\penalty0 (6):\penalty0 983--1006, 1998.

\bibitem[Kolchin(1986)]{Kolchin1986random}
V.~F. Kolchin.
\newblock \emph{Random Mappings}.
\newblock Translations series in mathematics and engineering. Optimization
  Software, Inc., Publications Division, 1986.

\bibitem[Korshunov(1978)]{Korshunov1978}
A.~Korshunov.
\newblock The number of initially connected automata.
\newblock \emph{Problemy Kibernetiki}, 34:\penalty0 5--82, 1978.

\bibitem[Lebensztayn(2010)]{Lebensztayn2010}
E.~Lebensztayn.
\newblock On the asymptotic enumeration of accessible automata.
\newblock \emph{Discrete Mathematics and Theoretical Computer Science},
  12\penalty0 (3):\penalty0 75--79, 2010.

\bibitem[McKay(1985)]{mckay1985asymptotics}
B.~D. McKay.
\newblock Asymptotics for symmetric 0-1 matrices with prescribed row sums.
\newblock \emph{Ars Combinatoria}, 19:\penalty0 15--25, 1985.

\bibitem[McKay and Wormald(1991)]{mckay1991asymptotic}
B.~D. McKay and N.~C. Wormald.
\newblock Asymptotic enumeration by degree sequence of graphs with degrees
  {$o(n^{1/2})$}.
\newblock \emph{Combinatorica}, 11\penalty0 (4):\penalty0 369--382, 1991.

\bibitem[Pittel(2013)]{RSA:RSA20433}
B.~Pittel.
\newblock Counting strongly-connected, moderately sparse directed graphs.
\newblock \emph{Random Structures and Algorithms}, 43\penalty0 (1):\penalty0
  49--79, 2013.

\bibitem[Pittel and Poole(2015)]{RSA:RSA20622}
B.~Pittel and D.~Poole.
\newblock Asymptotic distribution of the numbers of vertices and arcs of the
  giant strong component in sparse random digraphs.
\newblock \emph{Random Structures and Algorithms}, 2015.
\newblock To appear.

\bibitem[Pérez-Giménez and Wormald(2013)]{RSA:RSA20416}
X.~Pérez-Giménez and N.~Wormald.
\newblock Asymptotic enumeration of strongly connected digraphs by vertices and
  edges.
\newblock \emph{Random Structures and Algorithms}, 43\penalty0 (1):\penalty0
  80--114, 2013.

\bibitem[Sevast'yanov(1967)]{Sevastyanov1967}
B.~A. Sevast'yanov.
\newblock Convergence to gaussian and poisson processes of the distribution of
  the number of empty cells in the classical problem of pellets.
\newblock \emph{Teoriya Veroyatnostei i ee Primeneniya}, 12\penalty0
  (1):\penalty0 144--153, 1967.

\bibitem[Shaked and Shanthikumar(2007)]{Shaked2007}
M.~Shaked and J.~Shanthikumar.
\newblock \emph{Stochastic Orders}.
\newblock Springer Series in Statistics. Springer, 2007.

\bibitem[Sipser(2012)]{Sipser2012}
M.~Sipser.
\newblock \emph{Introduction to the Theory of Computation}.
\newblock Cengage Learning, 2012.

\bibitem[Stein(1972)]{Stein1972}
C.~Stein.
\newblock A bound for the error in the normal approximation to the distribution
  of a sum of dependent random variables.
\newblock In \emph{Proceedings of the 6th Berkeley Symposium on Mathematical
  Statistics and Probability, Volume 2: Probability Theory},  583--602.
  University of California Press, 1972.

\bibitem[van~der Hofstad(2014{\natexlab{a}})]{Van2014randomV1}
R.~van~der Hofstad.
\newblock \emph{Random Graphs and Complex Networks}, volume~1.
\newblock 2014{\natexlab{a}}.
\newblock URL \url{http://www.win.tue.nl/~rhofstad/NotesRGCN.pdf}.

\bibitem[van~der Hofstad(2014{\natexlab{b}})]{Van2014randomV2}
R.~van~der Hofstad.
\newblock \emph{Random Graphs and Complex Networks}, volume~2.
\newblock 2014{\natexlab{b}}.
\newblock URL \url{http://www.win.tue.nl/~rhofstad/NotesRGCNII.pdf}.

\bibitem[van~der Hofstad and Keane(2008)]{Van2008elementary}
R.~van~der Hofstad and M.~Keane.
\newblock An elementary proof of the hitting time theorem.
\newblock \emph{The American Mathematical Monthly}, 115\penalty0 (8):\penalty0
  753--756, 2008.

\end{thebibliography}

\section*{Appendix}

\setcounter{lemma}{0}
\renewcommand{\thelemma}{A\arabic{lemma}}

\subsection*{1. Inequalities for constants}

\begin{lemma}
    Assume that \(k \ge 2\).
    \begin{enumerate}[(a)]
        \item There exists exactly one \(\tauk > 0\) such that \(1-{\tauk}/k -
            e^{-{\tauk}} = 0\);
        \item \(0 < k - {\tauk} < 1/2\);
        \item \(1/2 < 1-\frac{1}{2k} < \nuk \equiv {\tauk}/k < 1\);
        \item \(\lambdak \equiv (k-{\tauk})\left( \frac{{\tauk}}{k-1} \right)^{k-1}
            < \lambda_k' \equiv (k-{\tauk}) e^{1-k+{\tauk}} < 1\);
        \item \(\gamma_k \equiv \left( \frac k {e{\tauk}} \right)^k(e^{{\tauk}} -1)
            < 1\);
        \item \(\rho_{k} \equiv k e^{1-\tauk} \left( \frac \tauk k \right)^{k-1} <
            1\);
        \item \(\lambdak = \Theta(ke^{-k})\) as \(k \to \infty\).
    \end{enumerate}
    \label{lem:constant}
\end{lemma}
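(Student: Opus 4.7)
The plan is to extract everything from the defining identity $k-\tau_k=ke^{-\tau_k}$ and tackle the seven parts in order. For (a), I would study $h(x)\equiv 1-x/k-e^{-x}$ on $(0,\infty)$: since $h(0)=0$, $h'(x)=e^{-x}-1/k$ vanishes only at $x=\log k$, and $h(x)\to-\infty$, the function $h$ strictly increases on $(0,\log k)$ (so $h(\log k)>0$) and then strictly decreases to $-\infty$, yielding a unique positive root $\tau_k>\log k$. For (b), the identity gives $\tau_k<k$ (so $k-\tau_k>0$), while the sharper bound $k-\tau_k<1/2$ follows, using that $h$ decreases past $\log k$, from $h(\log(2k))>0$, which reduces to $2k-2\log(2k)>1$—easy to verify at $k=2$ and increasing in $k$ thereafter. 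Part (c) is then immediate from $\tau_k\in(k-1/2,k)$.

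For (d), applying the strict inequality $\log y<y-1$ (for $y\ne 1$) to $y=\tau_k/(k-1)>1$ (ensured by (b)) and multiplying through by $k-1>0$ and exponentiating yields $(\tau_k/(k-1))^{k-1}<e^{\tau_k-(k-1)}$, hence $\lambda_k<\lambda_k'$. Substituting $k-\tau_k=ke^{-\tau_k}$ collapses $\lambda_k'$ to $ke^{1-k}$, which is decreasing in $k$ (its derivative is $(1-k)e^{1-k}$) and equals $2/e<1$ at $k=2$. For (e), using $e^{\tau_k}-1=\tau_k/(k-\tau_k)$, the inequality $\gamma_k<1$ is equivalent after rearranging to $(k/\tau_k)^{k-1}<e^{k-\tau_k}$; bounding $\log(k/\tau_k)<k/\tau_k-1=(k-\tau_k)/\tau_k$ reduces the problem to $\tau_k>k-1$, which is again given by (b).

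The main obstacle is (f), because the natural bound I will produce is actually tight at $k=2$. Fix $k$ and set $F_k(\tau)\equiv ke^{1-\tau}(\tau/k)^{k-1}$, so that $\rho_k=F_k(\tau_k)$. The logarithmic derivative $F_k'(\tau)/F_k(\tau)=(k-1)/\tau-1$ vanishes uniquely at $\tau=k-1$, giving a maximum there. By (b), $\tau_k>k-1/2>k-1$, so $F_k$ is strictly decreasing at $\tau_k$ and $\rho_k<F_k(k-1)=ke^{2-k}(1-1/k)^{k-1}$. It remains to prove $F_k(k-1)\le 1$, i.e.\ $\psi(k)\equiv(k-2)-(k-1)\log(1-1/k)-\log k\ge 0$ for $k\ge 2$. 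A direct computation gives $\psi(2)=0$ and, using $\log(1-x)<-x$, $\psi'(k)=1-\log(1-1/k)-2/k>1+1/k-2/k=1-1/k>0$, so $\psi\ge 0$ on $[2,\infty)$. Combined with the previous strict inequality, $\rho_k<1$.

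For (g), let $a_k\equiv k-\tau_k$. By (b), $a_k\in(0,1/2)$, and since $a_k=ke^{-\tau_k}$ with $\tau_k>\log k\to\infty$, we have $a_k\to 0^+$. The defining identity rewrites as $a_ke^{-a_k}=ke^{-k}$, and since $e^{-a_k}\to 1$ this forces $a_k=ke^{-k}(1+o(1))$. Meanwhile $\tau_k/(k-1)=1+(1-a_k)/(k-1)=1+O(1/k)$, so $(\tau_k/(k-1))^{k-1}\to e$. Multiplying, $\lambda_k=a_k\,(\tau_k/(k-1))^{k-1}\sim e\cdot ke^{-k}$, establishing $\lambda_k=\Theta(ke^{-k})$.
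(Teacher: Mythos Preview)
Your proof is correct, with one small wording slip in (g): from $\tau_k>\log k$ alone you only get $a_k=ke^{-\tau_k}<1$, not $a_k\to 0$. But this is harmless, since (b) already gives $a_k\in(0,1/2)$, so $e^{-a_k}\in(e^{-1/2},1)$ and your identity $a_ke^{-a_k}=ke^{-k}$ immediately yields $a_k=\Theta(ke^{-k})$ without first knowing $a_k\to 0$.

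Compared with the paper, parts (a)--(c) and the first half of (d) are essentially the same argument. Where you genuinely diverge is in the second half of (d), in (e), and especially in (f) and (g). For (d) you collapse $\lambda_k'$ to the closed form $ke^{1-k}$ via the defining relation, which is cleaner than the paper's direct bound on $\log(k-\tau_k)+1-(k-\tau_k)$. For (e) you rearrange to $(k/\tau_k)^{k-1}<e^{k-\tau_k}$ and reduce to $\tau_k>k-1$; the paper instead rewrites $\gamma_k=e^{\tau_k-k}(1-e^{-\tau_k})^{1-k}$ and bounds $\log(1-e^{-\tau_k})$ from both sides. Both routes rest on the same inequality $\log y<y-1$. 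The real improvement is (f): the paper handles $k=2$ by a numerical computation and $k\ge 3$ by a separate crude bound, whereas your observation that $\tau\mapsto ke^{1-\tau}(\tau/k)^{k-1}$ peaks at $\tau=k-1$ with value $\le 1$ (equality at $k=2$) gives a uniform analytic proof. Similarly, your treatment of (g) via $a_ke^{-a_k}=ke^{-k}$ and $(\tau_k/(k-1))^{k-1}\to e$ is tighter and more transparent than the paper's two-sided estimates on $\tau_k$; in fact you obtain the sharper $\lambda_k\sim e\cdot ke^{-k}$.
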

\begin{proof}
    Let \(\eta(x) = 1 - x/k - e^{-x}\). Since \(\eta''(x) = -e^{-x} < 0\),
    \(\eta(x)\) is strictly concave. Since \(\eta(k-1/2) > 0\), and \(\eta(k) <
    0\),
    \(\eta(x) = 0\) must have exactly one positive solution and this solution
    must be in \((k-1/2,k)\). Thus (a) and (b) are proved.
    (c) follows since \({\tauk}/k > 1 -1/k \ge 1/2\).
    For (d) note that \(\lambdak < \lambdak'\)
    as \(1-x < e^{-x}\) for all \(x \ne 0\). For \(\lambda_k' < 1\) note that
    \begin{align*}
        \log \lambdak'
        = \log(k-{\tauk}) + 1 - (k-{\tauk})
        = \log\left[1- (1- (k-{\tauk})) \right] + 1 - (k-{\tauk})
        < 0,
    \end{align*}
    since \(\log(1-x) < -x\) for all \(x \in (0,1)\).

    For (e), first use \({\tauk}/k \equiv 1-e^{-{\tauk}}\) to get
    \begin{align*}
        \gamma_k 
        = \frac 1 {e^{k}( 1 - e^{-{\tauk}} )^{k}} e^{{\tauk}} (1-e^{-{\tauk}})
        = e^{{\tauk}-k}( 1-e^{-{\tauk}} )^{1-k}.
    \end{align*}
    Then use \(k e^{-\tauk} \equiv k - \tauk\) to get
    \begin{align*}
        \log \gamma_k 
        & = {\tauk} - k + (1-k) \log ( 1 - e^{-{\tauk}}) \\
        & = ({\tauk} - k) + \log(1-e^{-{\tauk}}) -k \log(1-e^{-{\tauk}}) \\
        & < ({\tauk} - k) - e^{-{\tauk}} +k (e^{-{\tauk}} +e^{-2{\tauk}}) \\
        & = ({\tauk} - k) + (k-{\tauk}) + e^{-{\tauk}}(k-{\tauk} - 1) < 0,
    \end{align*}
    since \(-x > \log(1-x) > -x -x^2\) for all \(x \in (0, 1/2)\) and
    \(e^{-\tauk}=1-\nuk \in (0,1/2)\).

    For (f), use \(\tauk < k\) from (a) to get
    \begin{equation}
    \tauk \equiv k(1-e^{-\tauk}) < k(1- e^{-k})
    .
        \label{eq:k:tau:lower}
    \end{equation}
    Therefore,
    \[
    \frac{\tauk}{k} \equiv 1-e^{-\tauk} < 1- \exp\left\{-k\left(1- e^{-k}\right)\right\}
    .
    \]
    Again by (a), \(\tauk > k-1/2\). Thus
    \begin{equation}
    \tauk \equiv k(1-e^{-\tauk}) > k(1- e^{-k+\frac 1 2})
    .
        \label{eq:k:tau:upper}
    \end{equation}
    Therefore,
    \begin{equation*}
        ke^{-\tauk} < k\exp\left\{-k\left(1- e^{-k+\frac 1
        2}\right)\right\}
        .
    \end{equation*}
    The above bounds imply that
    \begin{align*}
        \rho_{k} 
        \equiv k e^{1-\tauk} \left( \frac \tauk k \right)^{k-1}
        < 
        k \exp \left\{1 - k\left( 1-e^{-k + \frac 1 2} \right) \right\}
        \left( 1- \exp\left\{ -k \left( 1-e^{-k} \right) \right\}
            \right)^{k-1}
            .
    \end{align*}
    Using this bound, numeric computations show that \(\rho_2 < 0.945651\).
    When \(k \ge 3\), the above upper bound is less than
    \[
    k \exp \left\{1 - k\left( 1-e^{-\frac 5 2} \right) \right\},
    \]
    which takes its maximal value at \(k = 3\) for \(k \in [3, \infty)\).
    This maximal value is about \(0.52\). Thus \(\rho_k < 1\) for all \(k \ge 2\).

    By \eqref{eq:k:tau:lower} and \eqref{eq:k:tau:upper}, \(k - \tauk =
    ke^{-k + O(1)}\) and \(\tauk/k = 1- e^{-k+O(1)}\) as \(k \to \infty\).
    Therefore
    \begin{align*}
        \lambdak 
        & \equiv (k-{\tauk})\left( \frac{{\tauk}}{k-1} \right)^{k-1} \\
        & = (k-{\tauk})\left( \frac{{\tauk}}{k} \right)^{k-1} \left(
        \frac{k}{k-1}
        \right)^{k-1} \\
        & = ke^{-k+O(1)} \left(1- e^{-k+O(1)}  \right)^{k-1} e(1+o(1))
        = ke^{-k+O(1)}
        .
    \end{align*}
    Thus (g) is proved.
\end{proof}

\subsection*{2. The sizes of \(k\)-surjections}

In this section we prove Lemma~\ref{lem:k:surj}. Recall that \(K_s\) is the
number of \(k\)-surjections of size \(s\) in \(\randdfa\).  We first deal the case
that \(s\) is small:
\begin{lemma}
    \label{lem:surj:one}
    \(\p{K_1 \ge 1} \le 1/{n^{k-1}} \le 1/n\).
\end{lemma}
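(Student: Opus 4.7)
The plan is very short: translate the definition of ``$k$-surjection of size one'' into a concrete event about the $k$ out-arcs of a single vertex, then apply the union bound.

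First I would observe that a singleton set $\cS = \{v\}$ is a $k$-surjection exactly when (i) there are no arcs from $\cS$ to $\cS^c$, i.e.\ every one of the $k$ out-arcs of $v$ is a self-loop at $v$, and (ii) $v$ has in-degree at least one inside $\randdfa[\cS]$, which is automatic once (i) holds (in fact $v$ then has in-degree $k$). So $K_1$ is simply the number of vertices $v \in [n]$ all of whose $k$ out-arcs point to $v$ itself.

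Next, for any fixed vertex $v$, each of the $k$ out-arcs independently picks its endpoint uniformly from $[n]$, so the probability that all $k$ arcs are the self-loop at $v$ equals $(1/n)^k$. Summing over the $n$ choices of $v$ via the union bound gives
\[
\p{K_1 \ge 1} \;\le\; n \cdot \frac{1}{n^k} \;=\; \frac{1}{n^{k-1}}.
\]
Finally, since we assume $k \ge 2$ throughout, $1/n^{k-1} \le 1/n$, which yields the second inequality in the statement.

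There is no real obstacle here; the only thing to be careful about is checking that the ``surjective'' requirement is automatically satisfied for a singleton once the ``closed'' requirement is, so that the event $\{K_1 \ge 1\}$ really does coincide with the existence of a vertex whose $k$ arcs are all self-loops.
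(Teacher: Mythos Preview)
Your proposal is correct and matches the paper's own proof essentially line for line: identify that a singleton is a $k$-surjection iff all $k$ of its out-arcs are self-loops, apply the union bound over the $n$ vertices to get $n\cdot(1/n)^k = 1/n^{k-1}$, and use $k\ge 2$ for the final inequality.
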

\begin{proof}
    A single vertex is a \(k\)-surjection if and only if all its \(k\) arcs are
    self-loops. Thus
    \[
    \p{K_1 \ge 1} 
    \le \sum_{v \in [n]} \p{v \text{ has only self-loops}} 
    = n \left( \frac 1 n \right)^k 
    \le \frac 1 {n^{k-1}}
    \le \frac 1 n.
    \tag*{\qedhere}
    \]
\end{proof}
\begin{lemma}
    \label{lem:surj:small}
    \(
    \p{\sum_{2 \le s \le a n} K_s \ge 1} = \smallo{{1}/{n}},
    \)
    for all fixed \(a \in \left(0, e^{-1/(k-1)}\right)\).
\end{lemma}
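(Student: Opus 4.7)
My plan is a first-moment argument with the sum split into two regimes. By Markov's inequality, $\p{\sum_{2 \le s \le an} K_s \ge 1} \le \sum_{2 \le s \le an} \e K_s$, so it suffices to control the sum of expectations. For a fixed $\cS \subseteq [n]$ of size $s$, being a $k$-surjection requires being closed, and closure is simply the event that the $ks$ arcs out of $\cS$ all land in $\cS$, which has probability $(s/n)^{ks}$. Combining this with $\binom{n}{s}\le (ne/s)^s$ yields, discarding the surjectivity constraint entirely,
\[
    \e K_s \le \binom{n}{s}\left(\frac{s}{n}\right)^{ks} \le h(s),
    \qquad h(s)\equiv \bigl(e(s/n)^{k-1}\bigr)^{s}.
\]

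Next I would study $h$ on $(0,\infty)$. Differentiating $\log h(s)=s[1+(k-1)\log(s/n)]$ gives $\frac{d}{ds}\log h(s) = k+(k-1)\log(s/n)$, which vanishes at $s^*\equiv ne^{-k/(k-1)}=\Theta(n)$ and is negative on $(0,s^*)$. Hence $h$ is strictly decreasing on $(0,s^*]$, so in particular $h(s)\le h(2)=\bigO{n^{-2(k-1)}}$ for every $s\in[2,s^*]$. Separately, whenever $s/n\le a$ we have $h(s)\le r^{s}$ with $r\equiv ea^{k-1}$, and the hypothesis $a<e^{-1/(k-1)}$ forces $r<1$ to be a fixed constant.

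I would then split the sum at the threshold $s_0\equiv \ceil{C\log n}$ with $C>1/|\log r|$ a sufficiently large constant. Since $s_0=\bigO{\log n}\ll s^*$ for large $n$, monotonicity applies on the entire interval $[2,s_0]$, giving
\[
    \sum_{2\le s\le s_0} h(s)\le s_0\cdot h(2)=\bigO{\log n\cdot n^{-2(k-1)}}=\smallo{1/n},
\]
where the last step uses $2(k-1)\ge 2$ for $k\ge 2$. For the tail, the geometric bound yields
\[
    \sum_{s_0<s\le an} h(s)\le \frac{r^{s_0}}{1-r}=\bigO{n^{-C|\log r|}}=\smallo{1/n}
\]
by the choice of $C$. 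Adding the two contributions gives $\sum_{2\le s\le an}\e K_s = \smallo{1/n}$, which is exactly the claim.

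There is no serious obstacle; the only technical point is checking monotonicity of $h$ and confirming $s_0\ll s^*$, both routine. The argument works because it uses only the closure half of the $k$-surjection definition---the surjectivity count, which Lemma~\ref{lem:k:surj} handles sharply for $s$ near $\nuk n$, is not needed in the small-$s$ regime. The hypothesis $a<e^{-1/(k-1)}$ serves precisely to make $e(s/n)^{k-1}$ bounded away from $1$, which is exactly the threshold that controls the large-$s$ tail.
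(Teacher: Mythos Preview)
Your proof is correct and follows essentially the same approach as the paper: a first-moment bound using only the closure probability $(s/n)^{ks}$ together with $\binom{n}{s}\le(en/s)^s$, then a split of the sum into small and large $s$. The only cosmetic difference is that the paper splits at $n^{\varepsilon}$ (with $\varepsilon$ chosen so that $2(k-1)(1-\varepsilon)>1$) rather than at $C\log n$, and bounds the small-$s$ range by replacing $s/n$ with $n^{\varepsilon-1}$ rather than invoking monotonicity of $h$; your monotonicity observation is arguably the cleaner of the two.
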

\begin{proof}
    We can choose \(\varepsilon \in (0,1)\) such that \(2(k-1)(1-\varepsilon) >
    1\) since
    \(k \ge 2\).  Let \(J = \{2,\ldots, \lfloor a n \rfloor\}\). Then
    \begin{align*}
        \p{\sum_{s \in J} K_s \ge 1} 
        & \le \sum_{s \in J} \sum_{\cS \subseteq [n]:|\cS|=s} \p{\cS
            \text{ is closed}} \\
        & = \sum_{s \in J} \binom{n}{s} \left( \frac s n
            \right)^{ks} \\
        & \le \sum_{s \in J} \left( \frac {en}{s}
        \right)^s \left( \frac s n \right)^{ks} \qquad (\text{Stirling's
        approximation}) \\
        & = \sum_{2 \le s \le n^{\varepsilon}} \left[ e \left( \frac
            s n \right)^{k-1}
            \right]^{s}
            + \sum_{n^{\varepsilon} < s < an } \left[ e \left( \frac s n \right)^{k-1}
            \right]^{s} \\
            & \le \left[ e \left( \frac{n^{\varepsilon}}{n}
                \right)^{k-1}
            \right]^{2}
            \sum_{2 \le s+2} 
            \left[ e \left( \frac{n^{\varepsilon}}{n}
                \right)^{k-1}
            \right]^{s}
            + \sum_{n^{\varepsilon} < s} \left( e \times
                a^{k-1}
            \right)^{s} \\
            & = \bigO{n^{-2(k-1)(1-\varepsilon)}} + \bigO{(e
                a^{k-1})^{n^{\varepsilon}}},
    \end{align*}
    where both terms are \(o(1/n)\) due to our choice of \(\varepsilon\) and
    \(a\).
\end{proof}

When \(s\) is large, we need to take into account the probability that \(\cS\) is
surjective.  Let \(\stirling{x}{y}\) denote Stirling's number of the second
kind, i.e., the number of ways to put \(x\) balls into \(y\) unordered
bins such that there are no empty bins \cite[pp. 64]{Flajolet2009}. Then
\[
    \p{\cS \text{ is surjective}~|~\cS \text{ is closed}} = \frac{\stirling{ks}{s}
s!}{s^{ks}},
\]
where the numerator is the number of ways to choose endpoints for the \(ks\) arcs
in \(\cS\) so that minimum in-degree is one, and the denominator is the total
number of ways to choose endpoints for \(ks\) arcs in \(\cS\). Thus
\begin{align*}
    \p{\cS \text{ is a \(k\)-surjection}}
& = \p{\cS \text{ is surjective}~|~\cS \text{ is closed}}\p{\cS \text{ is
closed}} \\
& = \frac{\stirling{ks}{s} s!}{s^{ks}} \left(\frac{s}{n}  \right)^{ks}
= \frac{\stirling{ks}{s} s!}{n^{ks}}.
\end{align*}
\citet*{Good1961} established an asymptotic estimation of Stirling's numbers
of the second kind
\begin{equation*}
    \stirling{k s}{s} \sim 
    \frac{(k s)!}{s!}
    \frac{(e^{{\tauk}}-1)^{s}}{{\tauk}^{k s} \sqrt{2 \pi k s (1-k e^{-k})}}.
\end{equation*}
Applying this and Stirling's approximation for factorials, we have
\begin{align*}
    \p{\cS \text{ is a \(k\)-surjection}}
& \sim   \frac{(k s)!}{s!}
    \frac{(e^{{\tauk}}-1)^{s}}{{\tauk}^{k s} \sqrt{2 \pi k s (1-k e^{-k})}}
    \frac{s!}{n^{ks}} \\
& \sim \frac{1}{\sqrt{1-ke^{-{\tauk}}}}
\left[ \left( \frac{s}{n} \right)^{k} \gamma_k \right]^{s},
\numberthis \label{prob:comm}
\end{align*}
where \(\gamma_k \equiv \left({k}/{e{\tauk}}  \right)^{k}(e^{{\tauk}}-1) < 1\)
(see Lemma~\ref{lem:constant}).
\begin{lemma}
    \label{lem:surj:big}
    There exists a constant \(b \in (\nuk, 1)\) such that \(\p{\sum_{bn \le s
    \le n} K_s \ge 1} = o\left( 1/n \right)\).
\end{lemma}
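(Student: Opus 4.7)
The plan is to bound $\E{K_s}$ exponentially for all $s \in [bn, n]$ using the asymptotic in \eqref{prob:comm}, then apply Markov's inequality. Combining \eqref{prob:comm} with Stirling's approximation for $\binom{n}{s}$ and writing $x = s/n$, a direct computation shows that, uniformly for $s$ in the range of interest,
\[
\E{K_s} = \binom{n}{s}\p{\cS \text{ is a $k$-surjection}} \sim \frac{g(s/n)}{\sqrt{2\pi n(1-ke^{-\tauk})}}\,[f(s/n)]^n,
\]
where $f$ and $g$ are the functions from Lemma~\ref{lem:k:surj}. Since $g(x) = O(\sqrt{n})$ on the entire range $x \in [b, 1 - 1/n]$, the problem reduces to showing that $f(x)$ is bounded strictly below $1$ on $[b, 1]$ for some $b \in (\nuk, 1)$.

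Set $h(x) \equiv \log f(x) = x(k-1)\log x + x\log \gamma_k - (1-x)\log(1-x)$. A direct differentiation yields
\[
h'(x) = (k-1)\log x + \log(1-x) + k + \log \gamma_k, \qquad h''(x) = \frac{k-1}{x} - \frac{1}{1-x}.
\]
Thus $h''(x) < 0$ precisely when $x > 1 - 1/k$, and by Lemma~\ref{lem:constant}(c) we have $\nuk > 1 - 1/(2k) > 1 - 1/k$, so $h$ is strictly concave on $(1-1/k, 1)$. Substituting the defining identities $1 - \nuk = e^{-\tauk}$, $\tauk = k\nuk$, and the explicit value of $\gamma_k$ into the expressions above yields both $h(\nuk) = 0$ and $h'(\nuk) = 0$. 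Hence $\nuk$ is the unique critical point of $h$ on $(1-1/k, 1)$, and by strict concavity it is the global maximum there. In particular $h$ is strictly decreasing on $[\nuk, 1)$, with $h(1) = \log \gamma_k < 0$ by Lemma~\ref{lem:constant}(e), so $h(x) < 0$ on $(\nuk, 1]$.

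Pick any $b \in (\nuk, 1)$ and let $c \equiv f(b) \in (0, 1)$. By the monotonicity just established, $f(x) \le c$ for every $x \in [b, 1]$. Combining this with the asymptotic in the first paragraph gives
\[
\sum_{bn \le s \le n} \E{K_s} \le O(1)\sum_{bn \le s \le n} \sqrt{n}\, c^n = O(n^{3/2} c^n) = o\!\left(\frac{1}{n}\right),
\]
and Markov's inequality yields $\p{\sum_{bn \le s \le n} K_s \ge 1} \le \sum_{bn \le s \le n} \E{K_s} = o(1/n)$, as required.

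The main obstacle is the algebraic verification of the two identities $h(\nuk) = 0$ and $h'(\nuk) = 0$, since these require careful use of the implicit definitions of $\tauk$, $\nuk$ and $\gamma_k$. Once these are in place, the geometric conclusion is easy: concavity on $(1 - 1/k, 1)$ together with a unique critical point at $\nuk$ forces $f$ to drop away from $1$ to the right of $\nuk$, and the remaining estimates are routine. A minor additional point is the behaviour of $g(s/n)$ when $s$ is within $O(1)$ of $n$; here $g(s/n) = O(\sqrt{n})$, which is absorbed harmlessly by the exponentially small factor $c^n$, and the isolated case $s = n$ can be handled directly since $\binom{n}{n} = 1$ and $\E{K_n}$ is of order $\gamma_k^n$.
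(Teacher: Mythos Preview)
Your proof is correct, but it takes a different and somewhat heavier route than the paper's. The paper does not analyze \(f\) or \(h\) at all here: it simply drops the factor \((s/n)^{ks}\le 1\) in \eqref{prob:comm} to get \(\p{\cS\text{ is a \(k\)-surjection}}=O(\gamma_k^{bn})\), bounds \(\binom{n}{s}\le\binom{n}{bn}\) (using only that \(b>\nuk>1/2\)), and observes that the resulting base \(\gamma_k^{b}/[b^b(1-b)^{1-b}]\) tends to \(\gamma_k<1\) as \(b\to 1\), so some \(b\) close enough to \(1\) works. This is cruder but avoids any calculus.

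What you do instead is essentially the content of the later Lemma~\ref{lem:function:h}: you compute \(h'\), \(h''\), locate the unique critical point at \(\nuk\), and use concavity on \((1-1/k,1)\) to conclude \(f<1\) on all of \((\nuk,1]\). This gives a stronger statement (any \(b\in(\nuk,1)\) works, not just one near \(1\)) and meshes neatly with the proof of Lemma~\ref{lem:surj:middle}, at the cost of front-loading the analysis of \(h\). One small looseness: the asymptotic \eqref{expec:comm} (equivalently, Stirling for \(\binom{n}{s}\)) does not hold as a \(\sim\) uniformly up to \(s=n-1\), only as an upper bound up to an absolute constant; you implicitly use it only as an upper bound, and you do flag the boundary case \(s=n\), so this is fine. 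Your displayed bound \(O(n^{3/2}c^n)\) is a slight overcount (the \(1/\sqrt{n}\) prefactor in the asymptotic cancels the \(O(\sqrt{n})\) from \(g\)), but this is harmless.
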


\begin{proof}
    Let \(b > \nuk\) be a constant decided later.
    If \(|\cS| = s \in [bn, n]\), then by \eqref{prob:comm} 
    \begin{align*}
        \p{\cS \text{ is a \(k\)-surjection}}
        = \bigO{
        \left[\left(\frac{s}{n}\right)^{k}\gamma_k
            \right]^{s}} 
            \le \bigO{\gamma_k^s}
            \le \bigO{\gamma_k^{bn}}.
    \end{align*}
    Since \(b > \nuk > 1/2\) (Lemma~\ref{lem:constant}),
    \[
    \binom{n}{s} \le \binom{n}{bn} = O\left(\frac 1 {\sqrt n} \left[
        \frac{1}{b^{b}(1-b)^{1-b}} \right]^{n}\right).
        \]
    Therefore
    \begin{align*}
        \p{K_s \ge 1} \le \binom{n}{s} \p{\cS \text{ is a \(k\)-surjection}}
        \le \bigO{ \left[ \frac{\gamma_k^{b}}{b^{b}(1-b)^{1-b}} \right]^{n} }.
    \end{align*}
    Since the quantity in the square brackets goes to \(\gamma_k < 1\) as \(b \to
    1\), we can pick a \(b\) close enough to one such that \(\p{\sum_{bn \le s \le
    n} K_s \ge 1} = o\left( 1/n \right)\).
\end{proof}

Let \(a \in (0,\nuk)\) and \(b \in (\nuk,1)\) be two constants such that the upper
bounds in Lemma~\ref{lem:surj:small} and \ref{lem:surj:big} hold. If \(|\cS| =
x n\) with \(x \in (a, b)\) and \(xn\) integer-valued, then by \eqref{prob:comm} and Stirling's approximation
\begin{align*}
    \e {K_{xn}}
    & = \binom{n}{xn} \p{\cS \text{ is a \(k\)-surjection}} \\
    & \sim \frac{1}{\sqrt{2 \pi x (1- x) n}}
    \left[ \frac{1}{\left( x \right)^{x} (1-x)^{1-x}}
        \right]^{n} 
    \frac{1}{\sqrt{1-ke^{-{\tauk}}}} \left(x^k \gamma_k \right)^{x n} \\
    & = \frac{1}{\sqrt{2 \pi (1-k e^{-{\tauk}})n}}  g\left( x \right) 
    \left[f\left( x \right) \right]^{n}
    \numberthis
    \label{expec:comm}
\end{align*}
where
\[
g(x) \equiv \frac 1 {\sqrt{x(1-x)}}, \qquad \qquad
f(x) \equiv 
    \left[
        \frac{x^{k-1} \gamma_k}{(1-x)^{(1-x)/x} }
    \right]^{x}.
    \]
\begin{lemma}
    \label{lem:surj:middle}
    For all fixed \(a \in (0, \nuk)\), \(b \in (\nuk, 1)\) and \(\delta \in
    (0,1/2)\),
    \(\p{\sum_{s \in J} K_s \ge 1} = o(1/n)\),
    where \(J = [an, \nuk n - n^{\frac 1 2 + \delta}] \cup [\nuk n +  n^{\frac 1 2 +
    \delta}, bn]\).
\end{lemma}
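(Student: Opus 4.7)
The plan is to combine the union bound with the sharp asymptotic \eqref{expec:comm} and the Taylor expansion of $f$ around $\nuk$ supplied by Lemma~\ref{lem:function:h}, which together force $[f(s/n)]^n$ to decay super-polynomially on $J$. By Markov's inequality,
\[
\p{\sum_{s \in J} K_s \ge 1} \le \sum_{s \in J} \E{K_s},
\]
so it suffices to show the right-hand side is $o(1/n)$. For $s = xn$ with $x \in [a, b]$, the factor $g(x) = 1/\sqrt{x(1-x)}$ is uniformly bounded on this compact interval, so \eqref{expec:comm} yields $\E{K_s} = O(n^{-1/2}) [f(x)]^n$ uniformly in $s$.

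The key structural fact we would record is that $\nuk$ is the unique global maximizer of $f$ on $[a, b]$, with $f(\nuk) = 1$. Locally this is encoded in Lemma~\ref{lem:function:h}, which gives
\[
f\!\left(\nuk + \frac{r}{n}\right) = \exp\!\left(-\frac{r^2}{2 \sigma_k^2 n^2}\right) + O\!\left(\frac{|r|^3}{n^3}\right);
\]
globally one verifies directly that $\frac{d}{dx}\log f(x) = (k-1)\log x + \log(1-x) + k + \log \gamma_k$ has $\nuk$ as its unique zero in $(0,1)$, using the defining identity $1 - \nuk = e^{-\tauk}$. Choose $\eta > 0$ small enough that the cubic remainder in the display above is absorbed into a quadratic upper bound, so that for all $|r| \le \eta n$,
\[
\left[f\!\left(\nuk + \frac{r}{n}\right)\right]^n \le \exp\!\left(-\frac{r^2}{3 \sigma_k^2 n}\right).
\]
By continuity and compactness, there is also a constant $c_1 \in (0,1)$ with $f(x) \le c_1$ for every $x \in [a, \nuk - \eta] \cup [\nuk + \eta, b]$.

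Now split $J = J_{\mathrm{near}} \cup J_{\mathrm{far}}$, where $J_{\mathrm{near}} = J \cap [\nuk n - \eta n, \nuk n + \eta n]$ and $J_{\mathrm{far}} = J \setminus J_{\mathrm{near}}$. For $s \in J_{\mathrm{near}}$, write $r = s - \nuk n$; by definition of $J$ we have $|r| \ge n^{1/2+\delta}$, so the near bound yields $[f(s/n)]^n \le \exp(-n^{2\delta}/(3 \sigma_k^2))$. Hence the near contribution is at most $|J_{\mathrm{near}}| \cdot O(n^{-1/2}) \exp(-\Theta(n^{2\delta})) = O(\sqrt{n}) \exp(-\Theta(n^{2\delta})) = o(1/n)$. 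For $s \in J_{\mathrm{far}}$, $[f(s/n)]^n \le c_1^n$, giving a far contribution of at most $|J_{\mathrm{far}}| \cdot O(n^{-1/2}) \cdot c_1^n = o(1/n)$. Summing the two contributions closes the argument.

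The main obstacle is the global uniqueness claim $f(x) < 1 = f(\nuk)$ for all $x \in [a, b] \setminus \{\nuk\}$, needed on the far part. This is the one genuine analytical input: everything else reduces to inserting the Taylor expansion from Lemma~\ref{lem:function:h} into standard exponential estimates. The uniqueness follows from a monotonicity analysis of $\frac{d}{dx}\log f$, combined with the strict concavity at $\nuk$ visible from Lemma~\ref{lem:function:h}; once that is in hand the rest is routine bookkeeping.
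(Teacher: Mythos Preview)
Your approach matches the paper's: union bound plus Markov, the asymptotic \eqref{expec:comm}, and the Taylor expansion of $h = \log f$ from Lemma~\ref{lem:function:h}. The paper dispenses with your near/far split by citing the monotonicity in Lemma~\ref{lem:function:h}(b), so that $\max_{s \in J} h(s/n)$ is attained at the inner endpoints $\nuk \pm n^{-1/2+\delta}$, where the Taylor expansion gives $h \le -n^{2\delta-1}/(2\sigma_k^2) + O(n^{3\delta-3/2})$ directly; then each $\p{K_s \ge 1} = o(1/n^2)$ and summing over $|J| \le n$ terms finishes.

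One specific claim of yours is false, however: $h'(x) = (k-1)\log x + \log(1-x) + k + \log\gamma_k$ does \emph{not} have $\nuk$ as its unique zero in $(0,1)$. Since $h''(x) = (k-1)/x - 1/(1-x)$ changes sign at $1-1/k$, the derivative $h'$ is unimodal (increasing on $(0,1-1/k)$, decreasing on $(1-1/k,1)$); because $h'(0^+) = -\infty$ while $h'(1-1/k) > h'(\nuk) = 0$, there is a second zero $x_0 \in (0, 1-1/k)$, which is a local minimum of $h$. The conclusion you actually need---that $\nuk$ is the unique global maximizer of $f$ on $[a,b]$ with $f(\nuk)=1$---is still correct, but the argument must be completed differently: from $h(0^+) = h(\nuk) = 0$ together with the shape just described (down to $h(x_0)<0$, back up to $0$ at $\nuk$, then down again), one gets $h < 0$ on $(0,1)\setminus\{\nuk\}$, and then your compactness argument on the far part goes through.
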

\begin{proof}
    Let \(h(x) \equiv \log f(x)\).  Lemma~\ref{lem:function:h} shows
    that as \(x \to \nuk\),
    \[ h(x) = -\frac{(x-\nuk)^2}{2 \sigma_k^2}+\bigO{|x-
    \nuk|^3},
    \]
    and that \(h(x)\) is strictly increasing on \((a,\nuk)\) and strictly decreasing on
    \((\nuk,b)\).
    It follows from \(|s/n - \nuk| > \errp\) that
    \(h\left( s/n \right) 
    \le -{n^{2\delta-1}}/{2 \sigma_k^2} + \bigO{n^{3\delta-3/2}}.
    \)
    As for \(g(x)\), it is bounded on \((a,b)\). Thus by \eqref{expec:comm} and
    Markov's inequality
    \begin{align*}
        \log ( n^2 \p{K_{s} \ge 1})
    & \le \log ( n^2 \e{K_{s}}) \\
    & = \log \left(n^2 \bigO{n^{-1/2}} f\left( \frac s n \right)^{n} \right) \\
    & = \bigO{\log n} + n h\left( \frac s n \right) \\
    & \le \bigO{\log n} -\frac{n^{2\delta}}{2 \sigma_k^2}
    + O\left( n^{3\delta-1/2} \right),
    \end{align*}
    which goes to \(-\infty\). In other words,
    \(
    \p{K_s \ge 1} = o\left( 1/{n^2} \right).
    \)
    So \(\p{\sum_{s \in J} K_s \ge 1} = o\left( 1/n \right)\).
\end{proof}

Lemma~\ref{lem:k:surj} follows immediately from Lemma~\ref{lem:surj:one},
\ref{lem:surj:small}, \ref{lem:surj:big}, and \ref{lem:surj:middle}.

\subsection*{3. Special functions}
\begin{lemma}
    Let \(f(x)\), \(g(x)\) and \(h(x)\) be defined as in the previous subsection.  
    Let \(\nuk\), \(\tauk\) and \(\sigma_k\) be as in Lemma \ref{lem:constant}.
    Then
    \begin{enumerate}[(a)]
        \item As \(x \to \nuk\), 
            \(g\left( x \right) 
            = g(\nuk) + \bigO{|x-\nuk|}
            = \left(1 + \bigO{|x-\nuk|} \right) /{(\sigma_k\sqrt{1-k
                e^{-\tauk}})}\).
            \item \(h(x)\) and \(f(x)\) are strictly increasing on \((1-\frac{1}{k},\nuk)\) and
                strictly decreasing on \(\left(\nuk, 1 \right)\).
            \item As \(x \to \nuk\),
            \[
        h(x) 
        = h(\nuk) + O(|x-\nuk|^3)
        = - \frac{(x-\nuk)^2}{2 \sigma_k^2} + O(|x-\nuk|^3),
        \]
        which implies that
        \[
        f(x) 
        = e^{h(x)}
        = \exp\left\{- \frac{(x-\nuk)^2}{2 \sigma_k^2}\right\} + O(|x-\nuk|^3).
        \]
    \end{enumerate}
    \label{lem:function:h}
\end{lemma}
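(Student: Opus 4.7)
The plan is to unfold the definitions of $f$, $g$, $h$ and then do Taylor expansion around $\nuk$, using the defining relations $1-\nuk = e^{-\tauk}$ and $\nuk = \tauk/k$ to simplify constants.

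For part (a), I would simply observe that $g(x) = 1/\sqrt{x(1-x)}$ is smooth on $(0,1)$, so by Taylor's theorem $g(x) = g(\nuk) + O(|x-\nuk|)$. The identity $g(\nuk) = 1/(\sigma_k\sqrt{1-ke^{-\tauk}})$ then reduces to checking $\nuk(1-\nuk) = \sigma_k^2(1-ke^{-\tauk})$, which follows directly from the alternative form $\sigma_k^2 = \nuk(1-\nuk)/(1-k(1-\nuk))$ of the Remark after Theorem \ref{thm:CLL} together with $1-\nuk = e^{-\tauk}$.

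For parts (b) and (c), I would first rewrite
\[
h(x) = x(k-1)\log x + x \log \gamma_k - (1-x)\log(1-x),
\]
and compute the derivatives
\[
h'(x) = (k-1)\log x + \log(1-x) + k + \log\gamma_k, \qquad h''(x) = \frac{k-1}{x} - \frac{1}{1-x} = \frac{k-1-kx}{x(1-x)}.
\]
Next I would simplify $\log\gamma_k$. Using $\tauk = k\nuk$ one gets $k\log(e\tauk) = k + k\log k + k\log\nuk$, and using $1-\nuk = e^{-\tauk}$ one has $e^{\tauk}-1 = \nuk/(1-\nuk)$, which combine to give
\[
\log\gamma_k = -k - (k-1)\log\nuk - \log(1-\nuk).
\]
Plugging this in shows immediately that $h'(\nuk) = 0$ and, after a short cancellation, that $h(\nuk) = -k\nuk - \log(1-\nuk) = -\tauk + \tauk = 0$.

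For part (b), I would use the explicit formula for $h''$: it vanishes precisely at $x = 1-1/k$ and is strictly negative on $(1-1/k,1)$. Since $\nuk > 1-1/k$ by Lemma \ref{lem:constant}(c), the function $h'$ is strictly decreasing on $(1-1/k,1)$, and combined with $h'(\nuk)=0$ this yields the required strict monotonicity of $h$ (and hence of $f = e^h$). For part (c), with $h(\nuk) = h'(\nuk) = 0$ in hand, Taylor's theorem with remainder gives $h(x) = \tfrac{1}{2}h''(\nuk)(x-\nuk)^2 + O(|x-\nuk|^3)$, and a quick computation yields
\[
h''(\nuk) = \frac{k-1}{\nuk} - \frac{1}{1-\nuk} = \frac{k(1-\nuk)-1}{\nuk(1-\nuk)} = -\frac{1}{\sigma_k^2},
\]
again using the same form of $\sigma_k^2$. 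The asymptotic for $f$ follows by writing $f(x) = \exp\{-(x-\nuk)^2/(2\sigma_k^2)\}\cdot\exp\{O(|x-\nuk|^3)\}$ and expanding the second factor as $1 + O(|x-\nuk|^3)$, noting that the first factor is bounded above by $1$ near $\nuk$.

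The only potentially tricky step is the identification of $\log\gamma_k$ in terms of $\nuk$ and $1-\nuk$; everything else is routine calculus. Once that simplification is carried out, the remaining computations are a direct verification, which is why I would organize the argument to dispatch part (a) first and then handle (b) and (c) in parallel from the formula for $h$.
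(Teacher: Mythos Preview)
Your proposal is correct and follows essentially the same approach as the paper: compute $h(\nuk)=h'(\nuk)=0$ and $h''(\nuk)=-1/\sigma_k^2$, use the sign of $h''$ for the monotonicity, and finish with Taylor. The only cosmetic difference is that the paper introduces the auxiliary function $r(x)=h(x)/x$ and differentiates $h(x)=xr(x)$, while you differentiate $h$ directly; both routes yield the same formula $h''(x)=(k-1-kx)/[x(1-x)]$, and your identification $\log\gamma_k=-k-(k-1)\log\nuk-\log(1-\nuk)$ is equivalent to the paper's $\gamma_k=\nuk^{-k+1}(1-\nuk)^{(1-\nuk)/\nuk}$ once one uses $\log(1-\nuk)=-\tauk$ and $\tauk/\nuk=k$.
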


\begin{proof}
    For (a), recall that \( \sigma_k^2 \equiv {{\tauk}}/(k
    e^{{\tauk}}(1-ke^{{-\tauk}})).\)
    Thus
    \(\sigma_k^2 (1-k e^{-\tauk}) 
    = \nuk (1-\nuk).\)
    Then
    \(
    g(\nuk)= {1}/{\sqrt{\nuk(1-\nuk)}} = {1}/{\sigma_k
        \sqrt{1-ke^{-\tauk}}}.
        \)
        Since \(g'(x)\) is bounded around \(\nuk\), by Taylor's theorem,
    \[
    g(x) = g(\nuk) + O(|x - \nuk|) = (1+\bigO{|x-\nuk|})\frac{1}{\sigma_k
        \sqrt{1-ke^{-\tauk}}}, \qquad \text{as \(x \to \nuk\)}.
        \]
        Let \(r(x) = \log \left( f(x)^{1/x} \right) = h(x)/x\). 
        Using \(\tauk/k \equiv 1-e^{-\tauk} \equiv \nuk\) shows that
    \[
    \gamma_k = \left( \frac{1}{e \nuk} \right)^{k} e^{\tauk} \nuk
    = \nuk^{-k+1} e^{-k+\tauk} = \nuk^{-k+1} (e^{-\tauk})^{(k-\tauk)/\tauk} =
    \nuk^{-k+1} (1-\nuk)^{(1-\nuk)/\nuk}.
    \]
    Then 
    \(
    r(\nuk) = \log\left( {\nuk^{k-1}}{\left( 1-\nuk
        \right)^{(\nuk-1)/\nuk}} \gamma_k 
        \right) = \log(1) = 0,
    \)
    \[
    r'(x) = \frac k x + \frac 1 {x^2} \log(1-x),\qquad \text{and} \qquad
    r''(x) = - \frac k {x^2} - \frac {2 \log(1-x)} {x^3} - \frac 1 {x^2(1-x)}.
    \]
    Therefore \(r'(\nuk) = 0\) and \(r''(\nuk)=-1/(\nuk \sigma_k^2)\).

    Since \(h(x) = x r(x)\),
    \[
    h'(x) = r(x) + x r'(x), \qquad h''(x) = 2 r'(x) + x r''(x) = \frac k x -
    \frac 1 {x(1-x)}.
    \]
    Thus \(h(\nuk) = 0\), \(h'(\nuk) = 0\) and \(h''(\nuk) = -1/\sigma_k^2\).
    Also recalling that \(1-\frac{1}{k} 
        < 1 - \frac{1}{2k} < \nuk < 1\) (Lemma \ref{lem:constant}),
    \(h(x)\) is strictly concave on \( (1-\frac{1}{k}, 1)\), reaching maximum at \(\nuk\).
    Thus (b) is proved.  The two asymptotic equations in (c) follow from Taylor's theorem.
\end{proof}

\subsection*{4. Probability generating functions of Galton-Watson processes}

\begin{lemma}
    Let \(\mu \in (0, \frac{1}{2k})\) be a constant where $k \ge 2$.  Let \((Z_{m})_{m \ge 0}\) be a
    Galton-Watson
    process with \(Z_0 \equiv 1\) and offspring distribution \(\Bin(k, \mu)\). Let
    \(\varphi_m(y) \equiv \e y^{Z_m}\). Then
    \[
    \varphi_{m}(0) \le 1 - (k \mu)^{m} + \left( 1- \frac{1}{2^{m}} \right) (k
    \mu)^{m+1}.
    \]
    \label{lem:gen:f}
\end{lemma}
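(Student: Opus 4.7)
\noindent
The plan is to analyze the recurrence satisfied by the probability generating function at $y = 0$. Since the offspring distribution has p.g.f.\ $\varphi(y) = (1-\mu+\mu y)^k$ and $\varphi_m = \varphi \circ \varphi_{m-1}$, setting $q_m \equiv 1 - \varphi_m(0) = \p{Z_m \ge 1}$ yields
\[
q_m = 1 - (1-\mu q_{m-1})^k, \qquad q_0 = 1.
\]
The claim to be proved is equivalent to $q_m \ge (k\mu)^m a_m$ where $a_m \equiv 1 - (1 - 2^{-m})k\mu$, and I would prove this by induction on $m$.

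\noindent
The first ingredient is the elementary polynomial inequality $1 - (1-x)^k \ge kx - \binom{k}{2}x^2$ for $x \in [0,1]$ and $k \ge 2$, which one verifies by checking that $f(x) \equiv 1 - (1-x)^k - kx + \binom{k}{2}x^2$ satisfies $f(0) = 0$, $f'(0) = 0$, and $f''(x) = k(k-1)[1 - (1-x)^{k-2}] \ge 0$. Applied to the recurrence and using $\binom{k}{2} \le k^2/2$, this gives
\[
q_m \ge k\mu\, q_{m-1}\left(1 - \tfrac{k\mu\, q_{m-1}}{2}\right).
\]
The second ingredient is Markov's inequality: $q_{m-1} \le \E{Z_{m-1}} = (k\mu)^{m-1}$, which combined with the hypothesis $k\mu < 1/2$ gives $q_{m-1} \le 2^{-(m-1)}$. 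Substituting yields the refined recurrence
\[
q_m \ge k\mu\, q_{m-1}\bigl(1 - k\mu/2^m\bigr).
\]

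\noindent
For the induction, the base case $m=0$ is immediate since $q_0 = 1 = 1 \cdot a_0$. Assuming $q_{m-1} \ge (k\mu)^{m-1} a_{m-1}$, the refined recurrence gives
\[
q_m \ge (k\mu)^m\, a_{m-1}\bigl(1 - k\mu/2^m\bigr),
\]
so it remains to verify the algebraic inequality $a_{m-1}(1 - k\mu/2^m) \ge a_m$. Expanding the left-hand side,
\[
\bigl[1 - (1 - 2^{-(m-1)})k\mu\bigr]\bigl(1 - k\mu/2^m\bigr) = 1 - k\mu\bigl[(1 - 2^{-(m-1)}) + 2^{-m}\bigr] + (1-2^{-(m-1)})\tfrac{(k\mu)^2}{2^m},
\]
and since $(1 - 2^{-(m-1)}) + 2^{-m} = 1 - 2^{-m}$, the first two terms already equal $a_m$ while the remaining term is nonnegative. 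This closes the induction and yields the stated bound.

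\noindent
The main obstacle is identifying that the naive bound $q_{m-1} \le 1$ in the quadratic correction term is too lossy: it would only give a factor $(1 - k\mu/2)$ at every step and produce a geometric loss of order $(k\mu)^m$, not the sharper $(1 - 2^{-m})k\mu$ correction required. The key idea is that the first-moment bound $q_{m-1} \le (k\mu)^{m-1} \le 2^{-(m-1)}$ provides a geometric improvement at each generation, and this improvement is exactly what makes the algebraic telescoping in the induction step work out with equality in the leading terms.
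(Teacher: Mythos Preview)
Your proof is correct and follows essentially the same route as the paper: induction on $m$ via the recurrence for the generating function, the quadratic lower bound $1-(1-x)^k \ge kx - \tfrac{k^2}{2}x^2$, and control of the second-order term through $(k\mu)^{m} < 2^{-m}$. The only cosmetic difference is that you invoke Markov's inequality to get the upper bound $q_{m-1}\le (k\mu)^{m-1}$ before applying the induction hypothesis, whereas the paper substitutes the induction hypothesis directly and bounds the resulting quadratic term numerically; the underlying estimates are identical.
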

\begin{proof}
    We use induction. 
    Let \(c_m = 1 - 1/2^{m}\).
    For \(m=1\),
    \[
    \varphi_1(y) = \e y^{Z_1} 
    = (1 - \mu(1-y))^{k}.
    \]
    Since \(\mu > 0\) and \(k \ge 2\), by Taylor's theorem,
    \[
    \varphi_1(0) 
    = (1-\mu)^k \le 1 - k \mu + \frac{(k \mu)^2}{2}
    = 1 - k \mu + c_1 (k \mu)^2.
    \]
    It is well known that for \(m > 1\), \(\varphi_{m}(y) =
    \varphi_1(\varphi_{m-1}(y))\) (see~\citep{Durrett2010probability}). Assuming
    the lemma holds for \(m\), then
    \begin{align*}
        \varphi_{m+1}(0)
        & = \varphi_1(\varphi_{m}(0)) = \left( 1- \mu \left( 1-\varphi_m(0)
        \right) \right)^{k} \\
        & \le \left( 1- \mu\left( (k\mu)^{m} - c_m(k \mu)^{m+1} \right) \right)^k
        \\
        & \le 1 - k \mu \left( (k\mu)^{m} - c_m(k \mu)^{m+1} \right)
        + \frac{k^2}{2} \mu^2 \left( (k\mu)^{m} - c_m(k \mu)^{m+1} \right)^2
        \\
        & = 1 - (k\mu)^{m+1} + c_m (k \mu)^{m+2} +
        \frac{(k \mu)^m}{2}(1- c_m k \mu)^2 (k \mu)^{m+2} \\
        & \le 1 - (k\mu)^{m+1} + c_{m+1}(k \mu)^{m+2}
        ,
    \end{align*}
    since \(k \mu < 1/2\) and \(c_{m+1} = c_m + 1/2^{m+1}\).
\end{proof}

\end{document}